\documentclass[11pt, a4paper,reqno]{amsart}
\usepackage{amsmath, amsthm, amscd, amsfonts, amssymb,color}
\usepackage[utf8x]{inputenc}
\usepackage{fullpage,verbatim}
\usepackage{pdftricks}
\begin{psinputs}
	\usepackage{graphicx,psfrag,subfigure}
	\usepackage{pstricks}
	\usepackage{multido}
	\usepackage{pst-all}
\end{psinputs}

\newcommand\ada[1]{{\color{blue} #1}}
\newcommand\dela[1]{}
\newcommand{\intg}{\int_{\partial D}}
\newcommand{\goto}{\rightarrow}
\newcommand{\vecx}{x}
\newcommand{\vecy}{y}
\newcommand{\mE}{\mathbb E}
\newcommand{\mP}{\mathbb P}
\newcommand{\norm}[2]{\left\|{#1}\right\|_{#2}}

\newcommand\bsig{\boldsymbol{\sigma}}
\newcommand\btau{\boldsymbol{\tau}}
\newcommand\bn{\boldsymbol{n}}
\newcommand\bcW{\boldsymbol{\mathcal{W}}}
\newcommand\OS{\O_{\mathrm S}}
\newcommand\bdiv{\mathop{\mathbf{div}}\nolimits}

\newcommand\qin{\qquad\hbox{in }}
\newcommand\OF{\O_{\mathrm F}}
\newcommand\HOF{\mathrm{H}^1(\OF)}
\newcommand*{\norma}[1]{%
\left\|\hspace{-1pt}\left|#1\right\|\hspace{-1pt}\right|}
\newcommand{\inpro}[2]{\left\langle{#1},{#2}\right\rangle}

\newcommand{\0}{\boldsymbol{0}}
\newcommand\bu{\boldsymbol{u}}

\newcommand\br{\boldsymbol{r}}
\newcommand\bs{\boldsymbol{s}}

\newcommand{\Corr}{{\rm{Cor}}}
\newcommand{\Covv}{{\rm{Cov}}}

\newcommand\bF{\boldsymbol{f}}

\newcommand\bG{\boldsymbol{G}}

\newcommand\bT{\boldsymbol{T}}

\newcommand\beps{\boldsymbol{\mathcal{E}}}

\DeclareMathOperator{\divv}{{div \/}}
\DeclareMathOperator{\var}{{Var \/}}


\newcommand\cC{\mathcal{C}}

\newcommand\cL{\mathcal{L}}


\newcommand\bcQ{\boldsymbol{\mathcal{Q}}}

\newcommand\bcX{\mathbb X}

\newcommand{\veczero}{\boldsymbol{0}}



\newcommand\bbA{\mathbb{A}}
\newcommand\bbB{\mathbb{B}}

\newcommand\R{\mathbb{R}}


\renewcommand\H{\mathrm{H}}
\renewcommand\L{\mathrm{L}}




\renewcommand\S{\Sigma_{C}}
\renewcommand\O{\Omega}

\newcommand\DOF{\partial\OF}

\newcommand\GG{\Gamma}
\newcommand\GD{\GG_{\mathrm D}}
\newcommand\GN{\GG_{\mathrm N}}
\newcommand\GO{\GG_F}
\newcommand\mas{\mathrm{S}}
\newcommand\F{\mathrm{F}}

\newcommand\rS{\rho_{\mathrm S}}
\newcommand\rF{\rho_{\mathrm F}}



\renewcommand\sp{\mathop{\mathrm{sp}}\nolimits}

\renewcommand\t{\mathtt{t}}





\newcommand{\set}[1]{\left\{#1\right\}}

\newcommand\qon{\qquad\hbox{on }}
\newcommand{\bV}{\boldsymbol{V}}


\newcommand{\vecn}{\boldsymbol{n}}
\newcommand\dist{\mathop{\mathrm{dist}}\nolimits}

\newcommand\dxd{d\times d}

\theoremstyle{plain}
\newtheorem{theorem}{Theorem}[section]
\theoremstyle{plain}

\newtheorem{lem}[theorem]{Lemma}
\newtheorem{prop}[theorem]{Proposition}

\theoremstyle{definition}

\newtheorem{defi}{Definition}[section]
\newtheorem{rem}{Remark}

\def\Rd{\mathbb{R}^d}
\def\H{\mathbb{H}}

\def\n{\mathbf{n}}

\newcommand{\eps}{\varepsilon}
\newcommand{\kerab}{[\ker(a^{\eps}) \times \bcQ^{\eps}]^{\perp_{\mathbb{B^{\eps}}}}}

\def\u{\mathbf{u}}
\def\H{\mathbf{H}}
{%
\setcounter{enumi}{0}

\begin{enumerate}}
{\end{enumerate} }

{
\setcounter{enumi}{0}

\begin{enumerate}}
{\end{enumerate} }

\numberwithin{equation}{section} 
\allowdisplaybreaks

\begin{document}
\begin{abstract}
The present paper deals with the interior solid-fluid interaction problem in harmonic regime with
randomly perturbed boundaries. Analysis of the  shape derivative and shape Hessian of vector- and
tensor-valued functions is
provided. Moments of the random solutions are approximated by those of the shape derivative and shape
Hessian, and the approximations are of third order accuracy in terms of the size of the boundary
perturbation. Our theoretical results are supported by an analytical example on a square domain.
\end{abstract}

\title[A Shape calculus apprpoach for solid--fluid interaction problem in stochastic domain]{A Shape calculus approach for time harmonic solid--fluid interaction problem in stochastic domains }

\author{Debopriya Mukherjee}
\address{%
School of Mathematics and Statistics\\
The University of New South Wales, Sydney 2052, Australia.}
\email{debopriya249@gmail.com}

\author{Thanh Tran}
\address{%
School of Mathematics and Statistics\\ 
The University of New South Wales, 
Sydney 2052, Australia.}
 \email{thanh.tran@unsw.edu.au}‎
\thanks{This  work is
supported by  the Australian Research Council Project DP160101755.
}  

\date{\today}
\maketitle  
\textbf{Keywords and phrases:} {solid--fluid interaction; stochastic domain; shape derivative;
shape Hessian.}

\section{Introduction}
In this paper, we consider the time harmonic forced vibrations of an elastic solid encircling in its interior an inviscid compressible fluid with randomly located boundaries. Since the domain and its perturbed boundaries are stochastic,
the solution depends on the `random event' $\omega$ and the parameter $\eps \geq 0$ controlling the amplitude of the perturbation.  
The usual approach to generate a large number $N$ of `sample' domains and to solve the deterministic boundary value problem on each sample is overpriced.
To overcome this costly computation, we approximate the moments of the solution by those of its shape derivative and shape Hessian, as has been done for other simpler models; see \cite{CPT,H10,HSS08}.
The problem considered in this article is much more complex involving vector-valued functions and tensors which requires careful analysis.

Over the past few years, the  solid--fluid interaction problems gain much attention due to its
applications in different engineering fields \cite{Chak,Dowell+Hall,Morand+Ohayon},
magneto-hydrodynamic flows  \cite{Grigoriadis}, electro-hydrodynamics \cite{Hoburg}, etc.
The model problem is represented by a vector--valued equation describing
time harmonic elastodynamic equations in the solid domain and the Helmholtz equation in the fluid region. On the common boundary the two systems are coupled via adequate transmission conditions. 

It is well known that mathematical models are approximations of physical phenomena.
Most often, the base model is too complicated or the scales are too disparate to include all
the parameters successfully. The neglected parameters are often replaced 
by some randomness in the deterministic model. In this way, loadings, coefficients and the
underlying domains are considered as stochastic input parameters. In the present article, we
focus on randomness in the domains.
The authors of \cite{H10,HSS08} exploit shape calculus tools to compute statistical moments of
the random solutions of elliptic boundary value problems on uncertain domains.
The authors of~\cite{CPT} use the same tool to solve elliptic transmission problems in 
unbounded stochastic domains. They also provide rigorous derivation and properties of shape derivatives.

Shape calculus tools involving the computation of shape derivative and shape Hessian (known as the so-called material derivative approach of continuum mechanics) is studied in
the books \cite{Has,SokZol92} in the deterministic framework. There is a growing literature
rationalized to shape optimization problems in estimating the first and second order shape
derivatives in the deterministic setup. For a quick survey we refer to, for example,
\cite{Bacani,Del+Zol,Tiihonen}.

Authors in \cite{Dambrine_2015} have derived asymptotic expansions of the first moments of the distribution of the output functional considered on a random domain through a boundary value problem.  Computation and use of second order derivative for vector valued states in the context of linear elasticity goes back to the works of Murat and Simon. Application of shape optimization methods in fluid mechanics in determinitic set-up is well-known in the literature; see, for example \cite{Caubet_2011} for the Stokes equations with both Dirichlet and Neumann boundary conditions, \cite{Caubet_2013} for stationary Navier-Stokes equations with non-homogeneous Dirichlet boundary conditions, \cite{Hettlich_1998} for transmission boundary value problem.

The solid--fluid interaction problems have been studied by Estecahandy and her co-authors in a nice series of works in deterministic set-up. To be more specific, we refer to \cite{Estecahandy_2014} (Discontinuous Galerkin based approach for higher-order polynomial-shape functions with the high frequency propagation regime) and \cite{Estecahandy_2018_SIAM} (finite element method approach to the Lipschitz continuous polygonal domains) and references therein.

\par
In this article, we develop a precise mathematical theory for computing the statistics of the solution of the solid-fluid interaction problem with randomly perturbed boundaries.
Our main contribution in this article is the derivation of the second order shape Taylor expansion of
vector-valued and tensor-valued functions. The results are presented in 
Theorem~\ref{thm.mat.der} (material derivative), Theorem \ref{thm.sh.der} (shape derivative) and Theorem
\ref{thm.sh.hes} (shape Hessian).
As a consequence, we obtain the stochastic shape Taylor expansion for the moments of the
solution (Theorem \ref{thm.main}).
To the best of our knowledge, this current work appears to be the first systematic treatment of
second order shape calculus for vector-valued and tensor-valued functions which form the
solution of the solid--fluid interaction problem under consideration.

In order to apply shape calculus to our particular model problem (the solid-fluid problem) a technical
issue requires us to study the spectrum of the solution operator of the problem 
(Proposition~\ref{prop.spT}). This result, another contribution of the paper, has its own interest.

Let us briefly describe the content of this paper. Section~\ref{sec.model} deals with the
description of the model problem with perturbed random boundaries
and details of the function spaces involved during the course of analysis. 
Section~\ref{sec:spe sol ope} consists of the spectral properties of the solution operator. 
Section~\ref{shape.sec} contains details of first and second order shape calculus. It also
shows the approximation of the solution moments with those of its shape derivatives. 
Section~\ref{sec.ex} provides an analytical example on a square domain perturbed by a uniform
distribution. This example illustrates the accuracy of the approximation.
Finally in the Appendix we recollect some basic definitions of tensors and their
properties, present some 
technical lemmas, and recall elementary  concepts of material and shape derivatives.

In the paper, $C$ stands for (with or without subscripts) a generic constant independent of the
discretization parameter and the wave number. These constants may take different values at different
places.
\dela{\section{Model Problem}\label{sec.unperturbed}
\dela{In view of \cite{MMT} and the references therein, let us briefly sketch the stress-pressure model formulation.
Let us consider a solid body represented by a Lipschitz and polyhedral domain $\OS \subset \Rd,\,d=2,3,$ with $\partial\OS=\Gamma_D \cup \Gamma_N \cup \Sigma_{C},$ where $\Gamma_D,\,\Gamma_N,$ and $\Sigma_{C}$ are disjoint parts of $\partial \OS.$ The solid interacts through the interface $\Sigma_{C}$ with a homogeneous, inviscid....}

In view of \cite{MMT} and the references therein,
let us consider a solid body represented by a Lipschitz and polyhedral domain $\OS\subset \mathbb R^d$, $d=2,3$, with $\partial \OS = \GD\cup\GN\cup \S$, where 
$\GD$, $\GN$, and $\S$ are disjoint parts of $\partial \OS$. We assume that the solid structure is fixed at $\GD\neq \emptyset$ and free of stress on $\GN$.
The solid interacts through the interface $\S$ with a homogeneous, inviscid and compressible fluid occupying a bounded  domain $\OF$.  The boundary $\DOF$ of the fluid domain is  the union of $\S$  and the open boundary of the fluid $\GO$ (we do not exclude the case $\GO=\emptyset$). We denote by $\bn_{\mas}$ (and $\bn$ respectively) the outward-pointing unit normal vector to the boundary $\partial \OS$ (and $\partial \OF$ respectively) of the fluid-solid domain $\O:=\OS\cup\S\cup\OF$. One can observe that on $\S,\, \bn_{\mas}=-\bn.$

 We aim to compute the linear oscillations that take place in the fluid-solid domain $\O:=\OS\cup\S\cup\OF$, under the action of a given time sinusoidal body force prescribed in the solid domain whose amplitude is $\bF: \OS \to \R^d$. The mathematical model associated to the physical phenomenon under interest is given by the set of equations
\begin{subequations}\label{equ:original prob}
\begin{align}
  \bdiv \bsig+\mu^2\rS \bu =\bF & \qin\OS,
  \label{dual-problem_1a}
 \\
 \bsig = \cC \beps(\bu) & \qin\OS,
 \label{dual-problem_1b}
 \\
\bu =\0 & \qon\GD,
\label{dual-problem_2}
\\
\bsig \bn=\0 & \qon\GN,
\label{dual-problem_3}
\\
\Delta p +\frac{\mu^2}{c^2}p=0 & \qin\OF,
\label{dual-problem_4}
\\
\frac{\partial p}{\partial\bn}
-\frac{\mu^2}{g}p=0 & \qon\GO,
\label{dual-problem_5}
\\
\bsig \bn+p \bn=\0 & \qon\S,
\label{dual-problem_6}
\\
\frac{\partial p}{\partial\bn}
- \mu^2 \rF \bu \cdot\bn=0 & \qon\S,
\label{dual-problem_7}
\end{align}
\end{subequations}
\dela{\begin{align}
\mbox{div}\, \sigma+\mu^2 \rho_s \u=f \quad \mbox{in} \quad \OS,\label{e1}\\
\sigma=\mathcal{C} \varepsilon (\u) \quad \mbox{in} \quad \OS, \label{e2}\\
\u=0 \quad \mbox{on} \quad \Gamma_D, \label{e3}\\
\sigma \n=0 \quad \mbox{on} \quad \Gamma_N, \label{e4}\\
\Delta p +\dfrac{\mu^2}{c^2}p=0 \quad \mbox{on} \quad \OF, \label{e5}\\
\dfrac{\partial p}{\partial \n}-\dfrac{\mu^2}{g}p=0 \quad \mbox{on} \quad \Gamma_0, \label{e6}\\
\sigma \n+p \n=0 \quad \mbox{on} \quad \Sigma_{C},
\label{e7}\\
\dfrac{\partial p}{\partial \n}-\mu^2 \rho_F \u \n=0 \quad \mbox{on} \quad \Sigma_{C}, \label{e8}
\end{align}
 }
where $\mu$ is the input frequency, $p$ is the fluid pressure, $\u$ is the solid displacement field and $\varepsilon(\u):=\dfrac{1}{2} \Big( \nabla \u+(\nabla \u)^t \Big).$ The Hooke's operator $\mathcal{C}:\R^{d \times d} \rightarrow  \R^{d \times d}$ is given by
$$\mathcal{C} \btau:=\lambda(\mbox{Tr}\, \btau)I+2\nu \btau,\quad \forall \, \btau \in \R^{d \times d},$$
with the Lam\'{e} coefficients $\lambda, \nu>0.$
The remaining physical coefficients are the solid and fluid densities $\rS>0$ and $\rF>0$, respectively, the acoustic speed $c>0$ and the gravity constant $g$.

\noindent\textbf{Notations.}
In all what follows we will denote the vectorial and tensorial counterparts of order
$d$ ($d=2,3$) of a given  Hilbert space $\H$  by $\H^d$ and $\H^{\dxd}$ respectively. We use  standard notation for the Hilbertian Sobolev space $\H^s(\O)$, $s\geq 0$, defined on a Lipschitz bounded domain $\O\subset \R^d$ and
denote by  $\|\cdot\|_{s,\Omega}$ the norms in $\mathrm{H}^s(\Omega)$,  $\mathrm{H}^s(\Omega)^d$ and $\mathrm{H}^s(\Omega)^{\dxd}$.

For $\bsig:\O\to \R^{\dxd}$ and $\bu:\O\to \R^d$, we define the row-wise divergence
$\bdiv \bsig:\O \to \R^d$  and  the  row-wise gradient $\nabla \bu:\O \to \R^{\dxd}$ by,
\[
 (\bdiv \bsig)_i := \sum_j   \partial_j \sigma_{ij} \qquad \text{and}  \qquad (\nabla \bu)_{ij} := \partial_j u_i.
\]
We  introduce for $s\geq 0$ the Hilbert space
\[
 \H^{s}(\mathbf{div};\O):=\set{\btau\in\H^s(\O)^{\dxd}:\ \bdiv\btau\in\H^s(\O)^d}
\]
endowed with the norm  $\|\btau\|^2_{\H^{s}(\mathbf{div};\O)}
:=\|\btau\|_{s,\O}^2+\|\bdiv\btau\|^2_{s,\O}$ and we use the convention
$\H(\mathbf{div};\O):={\H^0(\mathbf{div};\O)}$.
Let $C_c^{\infty}(\Rd;\Rd)$ be the space of all $\Rd-$valued compactly supported $C^{\infty}$ functions in $\Rd.$ 
Finally, $\0$ stands for a generic null vector or tensor and denote by $C$ (with or without subscipts) generic constants independent of the discretization parameter and the wave number. These constant may take different values at different places.
\newline
Given two Hilbert spaces $\mathbb{S}_1$ and $\mathbb{S}_2$ and a bounded bilinear form $c:\mathbb{S}_1 \times \mathbb{S}_2 \rightarrow \R,$ we will denote 
$$\mbox{Ker}(c):=\{s \in \mathbb{S}_1:c(s,t)=0 \quad \forall t \in \mathbb{S}_2 \}.   $$
\par
The stress tensor $\bsig$, which is imposed here 
as a primary unknown in the solid, will be sought in  the Sobolev space 
\[
\bcW := \Big\{\btau\in \mathbf{H}(\bdiv, \OS); \,\, \btau \bn = \0 \quad \text{on $\GN$} \Big\}.
\]
The fluid main variable is the pressure $p\in \H^{1}(\OF)$.
These choices induce us to introduce the product space 
\[
\widetilde {\mathbb X}:= \bcW\times \HOF
\] 
endowed with the Hilbertian norm 

\[
\|(\btau, q)\|_{\eps}^2
:=\|\btau \|_{\H(\bdiv,\OS)}^2+\|q\|_{1,\OF}^2 \quad \forall \, (\btau, q) \in \widetilde {\mathbb X}
\].

 As we are dealing with a dual formulation in $\OS$, the transmission condition \eqref{dual-problem_6} becomes essential (cf. \cite{gmm2011,MMR3}), it should then be strongly imposed in the continuous energy space  
\[
\mathbb X := \Big\{(\btau, q)\in \bcW\times \mathrm{H}^1(\OF);\,\, \btau\bn + q\bn  = \0 \quad \mbox{on}\,\, \S \Big\}.
\]  

It is usual \cite{abd,ArnoldFalkWinther,BoffiBrezziFortin,CGG} to take into account the symmetry of the stress tensor  weakly through the introduction of a Lagrange multiplier, which is given by the rotation 
$
\br:=\frac{1}{2}\big\{\nabla \bu - (\nabla \bu)^{\t}\big\}
$, that belongs to the  space $\bcQ$ of skew symmetric tensors 
\[
\bcQ:= \big\{\bs \in [\L^2(\OS)]^{\dxd}; \quad \bs^{\t} = -\bs\big\}.
\]
For brevity of notations we will also denote the Hilbertian product norm in
$\widetilde{\mathbb X}\times\bcQ$ by 
\[
\norma{ ((\btau,q),\bs) }^2
:=\|(\btau,q) \|^2+\|\bs\|_{0,\OS}^2
\quad \forall \, ((\btau,q),\bs) \in \widetilde {\mathbb X} \times \bcQ.
\].

For $(\bsig,p),(\btau,q)\in \widetilde{\mathbb X}$ and $\bs\in\bcQ$,  
we introduce the bounded bilinear forms
\begin{align}
a\big((\bsig,p),(\btau,q)\big)
& :=\int_{\OS}\frac{1}{\rS}\bdiv\bsig\cdot\bdiv\btau
+\int_{\OF}\frac{1}{\rF}\nabla p\cdot\nabla q\label{def.bilinear1},
\\[1ex]
d\big((\bsig,p),(\btau,q)\big)
& :=\int_{\OS}\cC^{-1}\bsig:\btau
+\int_{\OF}\frac{1}{\rF c^2}pq
+\int_{\GO}\frac{1}{\rF g}pq,\label{def.bilinear2}
\\[1ex]
b\big((\btau,q),\bs\big)
& :=\int_{\OS}\btau:\bs \label{def.bilinear3},
\end{align}
and denote
$
A\big((\bsig,p),(\btau,q)\big) :=a\big((\bsig,p),(\btau,q)\big)
+d\big((\bsig,p),(\btau,q)\big).
$
We point out that the kernel $\ker(a):= \set{(\btau,q)\in \mathbb{X};
\ a\big((\btau, q), (\btau, q)\big) = 0}$ of the bilinear form $a(\cdot, \cdot)$ in $\mathbb X$ is given by 
\[
 \ker(a)= \set{(\btau,q)\in \mathbb{X};
\ \text{$\bdiv\btau=\0\text{ in }\OS$ and $q$ constant in $\OF$}}.
\]

Assuming that $\bF\in \L^2(\OS)^d$, it is straightforward to show that the variational formulation of \eqref{dual-problem_1a}--\eqref{dual-problem_7} is given by (see \cite{gmm2011,MMR3} for more details): Find $(\bsig,p)\in \mathbb{X}$ and $\br \in \bcQ$ such that
\begin{equation}\label{varForm}
\begin{aligned}
a\big((\bsig,p),(\btau,q)\big)
- \mu^2 \Big( b\big((\btau,q),\br\big) + d\big((\bsig,p),(\btau,q)\big)\Big)
& =  \displaystyle\int_{\OS}\frac{1}{\rS} \bF \cdot \bdiv \btau 
 \\[1ex]
b\big((\bsig,p),\bs\big)
& = 0,
\end{aligned}
\end{equation}
for all $(\btau,q)\in\mathbb{X}$ and $\bs\in\bcQ$.
It will be convenient to write the saddle point problem \eqref{varForm} in the  equivalent tensorial form: 
\begin{equation}\label{compactVar}
\begin{array}{l}
\text{Find $\big((\bsig, p), \br\big)\in \mathbb{X}\times\bcQ$ such that,}\\[1ex]
 \mathbb D\Big(\big((\bsig, p), \br\big), \big((\btau, q), \bs\big)\Big)=  \displaystyle\int_{\OS} \frac{1}{\rS} \bF \cdot \bdiv \btau,\quad \forall \,
 ((\btau, q), \bs)\in\mathbb{X}\times\bcQ,
 \end{array}
\end{equation} 
where 
$$
\mathbb D\Big(\big((\bsig, p), \br\big), \big((\btau, q), \bs\big)\Big)=
\bbA\Big(\big((\bsig, p), \br\big), \big((\btau, q), \bs\big)\Big)
- (1+\mu^2) \bbB\Big(\big((\bsig, p), \br\big), \big((\btau, q), \bs\big)\Big)
$$
with 
\begin{align}
\bbA\Big(\big((\bsig, p), \br\big), \big((\btau, q), \bs\big)\Big)
& :=A((\bsig,p),(\btau,q))
+b((\btau,q),\br)
+b((\bsig,p),\bs),\label{def.bbA}
\\[1ex]
\bbB\Big(\big((\bsig, p), \br\big), \big((\btau, q), \bs\big)\Big)
& :=d((\bsig,p),(\btau,q))
+b((\btau,q),\br)
+b((\bsig,p),\bs).\label{def.bbB}
\end{align}
}
\dela{\begin{rem}
	We point out that the displacement does not appear in our variational formulation \eqref{varForm}. However, once the stress tensor is known,  we can recover and also post-precess the displacement at the discrete level by using the equilibrium equation \eqref{dual-problem_1a} .
\end{rem}}

\section{ Time harmonic solid--fluid interaction problem on perturbed domain}
\label{sec.model}

In this section we describe the problem and provide preliminaries for the forth-coming analysis.

\subsection{Statistical moments}\label{pre.sec}
Throughout this paper, we denote by $(\mathfrak{U}, \mathcal{U},\mathbb{P})$ a generic complete probability space.
Let $D$ be a bounded domain in $\R^3$
with boundary $\partial D$ of class $C^k,\,k \geq 2$.
\begin{defi}
For a random field $v\in L^k(\mathfrak{U}, D)$,
its $k$-order moment $\mathcal{M}^k[v]$ is an element of $D^{(k)}$ defined by
\[
\mathcal{M}^k [v] :=  \int_{\Omega} 
\big(
\underbrace{v(\omega)\otimes\cdots\otimes v(\omega)}_{k\textrm{-times}}
\big)
\,d\mathbb P(\omega).
\]
\end{defi}
In the case $k=1$, the statistical moment $\mathcal{M}^1 [v]$ is same as the \textit{mean value}
of $v$ and is denoted by $\mE [v]$. If $k\geq 2$, 
the statistical moment $\mathcal{M}^k [v]$ is known as the 
\textit{$k$-point autocorrelation function} of $v$.
The quantity $\mathcal{M}^k [v - \mE[v]]$ is termed the $k$-th central moment of $v$. In particular, the
second order moments: the \textit{correlation} and \textit{covariance} are
defined by
\begin{equation}\label{eq.cor}
\Corr[v] := \mathcal{M}^2[v] \quad\text{and}\quad  \Covv[v] := \mathcal{M}^2[v - \mE[v]].
\end{equation}
\dela{To specify random boundary variations,we assume that ....is a random
field on ∂D taking values in R. We denote by X a space of admissible boundary perturbation
functions κ. The random perturbations of D will be described by a suitable
probability space  consisting of (a) a set 	 of realizations 
(i.e., realizations of particular perturbations κ(·)), (b) a sigma algebra 
 and (c) a
probability measure on the space X.}

\subsection{Representation of random interfaces}\label{subsec.perturbation}
Let us consider a solid body represented by a $C^2$ domain $\OS\subset \mathbb R^d$, $d=2,3$, with $\partial \OS = \GD\cup\GN\cup \S$, where 
$\GD$, $\GN$, and $\S$ are disjoint parts of $\partial \OS$. We assume that the solid structure is fixed at $\GD\neq \emptyset$ and free of stress on$~\GN$.
The solid interacts through the interface $\S$ with a homogeneous, inviscid and compressible
fluid occupying a bounded  domain $\OF$.  The boundary $\DOF$ of the fluid domain is  $\S$. We
denote by $\bn_{\mas}$ (~$\bn_{\F}$ respectively) the outward-pointing unit normal vector to
the boundary $\partial \OS$ ($\partial \OF$ respectively) of the fluid-solid domain
$\O:=\OS\cup\OF$; see Figure~\ref{fig:Ome}. It can be observed that on $\S$, one has
$\bn_{\mas}=-\bn_{\F}.$ For more details about the model problem, we refer to \cite{Garcia,MMT}
and the references cited therein.

Following \cite{CPT} and the references therein, we present the random domain.
Suppose $\kappa \in L^k(\mathfrak{U}, {C^{2,1}(\partial \OS)})$ is a random field. 
For some sufficiently small value $\eps \geq 0$, we consider a family of random interfaces of the form
\begin{equation}\label{equ:rand inter}
\partial \OS^\eps(\omega)
=
\{
\vecx 
+
\eps
\kappa(\vecx,\omega)\vecn(\vecx):
\vecx\in \partial \OS
\},
\quad
\omega \in \mathfrak{U},
\end{equation}
where $\bn$ is given by
\begin{align*}
\bn= \left\{\begin{aligned}
                  & \bn_{\F} \qquad \mbox{on} \quad \S,\\
                   & \bn_{\mas} \quad \quad \mbox{on} \quad \GD \cup \GN.
                   \end{aligned}
   \right.
\end{align*}
Here, the randomness of the surfaces $\partial \OS^\eps(\omega)$ is represented by the
randomness in $\kappa(\cdot,\omega)$. 
We observe that the interface $\partial \OS^\eps(\omega)|_{\eps = 0}$ coincides with $\partial
\OS$ and therefore is a deterministic closed manifold. 
If we identify $\partial \OS^\eps$ and $\partial \OS$ with the functions 
defining their graphs, then
\[
\begin{split}
\|\partial \OS^\eps - \partial \OS\|_{L^k(\mathfrak{U},C^{2,1})} 
&
\leq 
\eps \|\kappa\|_{L^k(\mathfrak{U},{C^{2,1}(\partial \OS)})}\|\vecn\|_{C^{2,1}(\partial \OS)}.
\end{split}
\]

We will specify the required smoothness assumptions on $\kappa$ in \emph{shape calculus} in Section \ref{shape.sec}.
From \eqref{equ:rand inter} we observe that the \emph{mean random interface} is represented by
\[
\mE[\partial \OS^\eps]
=
\big\{ \vecx + \eps {\mE[\kappa(\vecx,\cdot)] \bn}(\vecx), \ \vecx\in\partial \OS \big\}.
\]
Without loss of generality, we may assume 
that the random perturbation amplitude $\kappa(\vecx,\omega)$ is centred, i.e.,
\begin{equation}\label{equ:kappa sym}
{\mE[\kappa(\vecx,\cdot)]}
=
0
\qquad
\forall
\vecx\in\partial \OS.
\end{equation}
{In this case}
\[
\mE[\partial \OS^\eps]
= \partial \OS \qquad \text{and} \qquad
\Covv[\kappa](\vecx,\vecy)
=
\Corr[\kappa](\vecx,\vecy).
\]
\begin{figure}[h]
\begin{center}
\begin{pdfpic}
\psset{unit=0.6cm}
\begin{pspicture}
%
\psarc[linewidth=2pt](0,0){3}{0}{180}
\pscircle(0,0){3}
\pscircle(0,0){1.5}
\uput[d](0,-3.0){$\Gamma_{\mathrm{N}$}}
\uput[d](0,4){$\Gamma_{\mathrm{D}$}}
\uput[l](-1.4,0){$\Sigma_C$}
\uput[ur](1.5,0){\textit\textbf{n}}
\psline{->}(1.5,0)(2.5,0)
\psline{->}(3,0)(4,0)
\def\blockpt{
\multirput(0,0)(0.2,0){5}{\psdot[dotsize=2pt]}
}
\multirput(-0.9,1)(1,0){2}{\blockpt}
\multirput(-1.4,0.5)(1,0){3}{\blockpt}
\multirput(-1.45,0.0)(1,0){3}{\blockpt}
\multirput(-1.4,-0.5)(1,0){1}{\blockpt}
\multirput(0.6,-0.5)(1,0){1}{\blockpt}
\multirput(-0.9,-1)(1,0){2}{\blockpt}
\uput[d](1,-1.5){$\Omega_{\mathrm{S}}$}
\uput[d](0,0.1){$\Omega_{\mathrm{F}}$}
\end{pspicture}
\end{pdfpic}
\caption{Solid domain~$\OS$ and fluid domain~$\OF$}\label{fig:Ome}
\end{center}
\end{figure}

\subsection{Model problem}\label{modelproblem.sec}
For some sufficiently small and nonnegative $\eps$ we aim to compute the linear oscillations of an
elastic structure encircling in its interior an inviscid fluid appearing in the fluid-solid perturbed
domain $\O^{\eps}(\omega):=\OS^{\eps}(\omega)\cup\S^{\eps}(\omega)\cup\OF^{\eps}(\omega)$, under the
action of a given time sinusoidal body force prescribed in the solid domain whose amplitude is $\bF: B_R
\to \R^d$, which is assumed to be independent of $\omega$.
Having introduced these perturbed domains and boundaries, the model problem 
is to find the solid displacement field $\bu^{\eps}$ and the fluid pressure $p^{\eps}$ satisfying
\begin{subequations}\label{equ:perturbed prob}
\begin{align}
  \bdiv \bsig^\eps(\vecx,\omega)+\mu^2\rS \bu^\eps (\vecx,\omega)=\bF(\vecx) & \qin\OS^\eps(\omega),
  \label{dual-problem_1a.ep}
 \\
 \bsig^\eps(\vecx,\omega) = \cC \beps(\bu^\eps(\vecx,\omega)) & \qin\OS^\eps(\omega),
 \label{dual-problem_1b.ep}
 \\
\bu^\eps(\vecx,\omega) =\0 & \qon\GD^\eps(\omega),
\label{dual-problem_2.ep}
\\
\bsig^\eps(\vecx,\omega) \bn^{\eps}=\0 & \qon\GN^\eps(\omega),
\label{dual-problem_3.ep}
\\
\Delta p^\eps(\vecx,\omega) +\frac{\mu^2}{c^2}p^\eps(\vecx,\omega)=0 & \qin\OF^\eps(\omega),
\label{dual-problem_4.ep}
\\
\bsig^\eps(\vecx,\omega) \bn^{\eps}+p^{\eps}(\vecx,\omega) \bn^{\eps}=\0 & \qon\S^\eps(\omega),
\label{dual-problem_6.ep}
\\
\frac{\partial p^\eps}{\partial\bn^{\eps}}(\vecx,\omega)
- \mu^2 \rF \bu^\eps(\vecx,\omega) \cdot\bn^{\eps}=0 & \qon\S^\eps(\omega),
\label{dual-problem_7.ep}
\end{align}
\end{subequations}
where $\0$ stands for a generic null vector or tensor.
Here the stress tensor $\bsig^{\eps}$ is defined by the linearised strain tensor 
$\mathcal{E}(\u^{\eps})$ and the
Hooke operator $\mathcal{C}:\R^{d \times d} \rightarrow  \R^{d \times d}$ defined by
$$
\mathcal{E}(\u^{\eps}):=\dfrac{1}{2} \Big( \nabla \u^{\eps}+(\nabla \u^{\eps})^{\top} \Big)
\quad\text{and}\quad
\mathcal{C} \btau:=\lambda(\mbox{Tr}\, \btau)I+2\nu \btau \quad \forall \, \btau \in \R^{d \times d}.
$$
Here $\lambda, \nu>0$ are the Lam\'{e} constants and~$\text{Tr}\btau$ denotes the trace of~$\btau$.
The remaining physical coefficients are the solid and
fluid densities $\rS>0$ and $\rF>0$, respectively, the acoustic speed $c>0$, the input frequency $\mu$,
and the gravity constant $g$. 

In the next subsection, we introduce the function spaces in the deterministic set-up needed for the analysis.

\subsection{Function spaces and weak formulation.}
We denote by $C_c^{\infty}(\Rd;\Rd)$ as the space of all $\Rd-$valued compactly supported $C^{\infty}$ functions in~$\Rd.$
In what follows we will denote the vectorial and tensorial counterparts of order
$d$ ($d=2,3$) of a given  Hilbert space $\H$  by $\H^d$ and $\H^{\dxd},$ respectively. We use  standard notation for the Hilbertian Sobolev space $\H^s(D)$, $s\geq 0$, defined on a $C^2$ bounded domain $D\subset \R^d$ and
denote by  $\|\cdot\|_{s,D}$ the norms in $\mathrm{H}^s(\Omega)$,  $\mathrm{H}^s(D)^d$ and $\mathrm{H}^s(D)^{\dxd}$.

For $\bsig:D\to \R^{\dxd}$ and $\bu:D\to \R^d$, we define the row-wise divergence
$\bdiv \bsig:D \to \R^d$  and  the gradient $\nabla \bu:D \to \R^{\dxd}$ by,
\[
 (\bdiv \bsig)_i := \sum_j   \partial_j \sigma_{ij} \qquad \text{and}  \qquad (\nabla \bu)_{ij} 
 := (\nabla^{\top} \otimes \bu)_{ij} 
 =\partial_j u_i.
\]
We  introduce for $s\geq 0$ the Hilbert space
\[
 \H^{s}(\mathbf{div};D):=\set{\btau\in\H^s(D)^{\dxd}:\ \bdiv\btau\in\H^s(D)^d}
\]
endowed with the norm  $\|\btau\|^2_{\H^{s}(\mathbf{div};D)}
:=\|\btau\|_{s,D}^2+\|\bdiv\btau\|^2_{s,D}$, and we use the convention
$\H(\mathbf{div};D):={\H^0(\mathbf{div};D)}$. 

\par
The stress tensor $\bsig^{\eps}$, which is imposed here 
as a primary unknown in the solid, will be sought in  the Sobolev space 
\[
\bcW^{\eps} := \Big\{\btau\in \mathbf{H}(\bdiv, \OS^{\eps}): \,\, \btau \bn^{\eps} = \0 \quad \text{on $\GN^{\eps}$} \Big\}.
\]
The fluid main variable is the pressure $p^{\eps} \in \H^{1}(\OF^{\eps})$.
For convenience we introduce the product space 
\[
\widetilde {\mathbb X}^{\eps}:= \bcW^{\eps}\times \mathrm{H}^1(\OF^{\eps})
\] 
endowed with the Hilbertian norm 
\[
\|(\btau, q)\|_{\eps}^2
:=\|\btau \|_{\H(\bdiv,\OS^{\eps})}^2+\|q\|_{1,\OF^{\eps}}^2 \quad \forall \, (\btau, q) \in \widetilde {\mathbb X}^{\eps}.
\]

In articles \cite{Bermu,Feng}, displacement formulation in the solid  combined with a formulation using
the acoustic pressure (or the fluid displacement) as main variables in the
fluid domain is studied.
In recent years, there are extensive studies of the stress-pressure formulation weakly imposing the
symmetry of the
stress tensor; see for instance \cite{Ar1,Ar2,GMM,gmm2011}. The dual-mixed formulation which
approximates the elastic Cauchy stress tensor is emphasized
in the literature; see e.g. \cite{Ar1,MR3376135} and the references therein.  

 As we are dealing with a dual formulation in $\OS^{\eps}$, the transmission condition \eqref{dual-problem_6.ep} becomes essential (cf. \cite{gmm2011,MMR3}), it should then be strongly imposed in the continuous energy space  
\[
\mathbb X^{\eps} := \Big\{(\btau, q)\in \widetilde {\mathbb X}^{\eps}:\quad \btau\bn^{\eps} + q\bn^{\eps}  = \0 \quad \text{on} \,\, \S^{\eps} \Big\}.
\]  
It is natural \cite{Ar1,BoffiBrezziFortin,CGG,abd} to take into consideration the symmetry of the stress tensor  weakly through the introduction of a Lagrange multiplier, which is given by the rotation 
$
\br^{\eps}:=\frac{1}{2}\big\{\nabla \bu^{\eps} - (\nabla \bu^{\eps})^{\top}\big\}
$ and belongs to the  space $\bcQ^{\eps}$ of skew symmetric tensors 
\[
\bcQ^{\eps}:= \big\{\bs \in [\L^2(\OS^{\eps})]^{\dxd}: \quad \bs^{\top} = -\bs\big\}.
\]
For brevity of notations we denote the Hilbertian product norm in
$\widetilde{\mathbb X}^{\eps}\times\bcQ^{\eps}$ by 
\[
\norma{ ((\btau,q),\bs) }_{\eps}^2
:=\|(\btau,q) \|_{\eps}^2+\|\bs\|_{0,\OS^{\eps}}^2
\quad \forall \, ((\btau,q),\bs) \in \widetilde {\mathbb X}^{\eps} \times \bcQ^{\eps}.
\].
We define the following bounded bilinear forms 
\begin{subequations}\label{def.adb}
\begin{align} &a_1^{\eps}:\mathbf{H}(\bdiv, \OS^{\eps}) \times \mathbf{H}(\bdiv, \OS^{\eps}) \rightarrow \R \quad \mbox{by} \quad a_1^{\eps}(\bsig,\btau)=
\int_{\OS^{\eps}}\frac{1}{\rS}\bdiv\bsig(\vecx) \cdot\bdiv\btau(\vecx)d\vecx, \\
 &a_2^{\eps}:\mathrm{H}^1(\OF^{\eps}) \times \mathrm{H}^1(\OF^{\eps}) \rightarrow \R \quad \mbox{by} \quad a_2^{\eps}(p,q)=
\int_{\OF^{\eps}}\frac{1}{\rF}\nabla p(\vecx)\cdot\nabla q(\vecx)d\vecx, \\
&d_1^{\eps}:\mathbf{H}(\bdiv, \OS^{\eps}) \times \mathbf{H}(\bdiv, \OS^{\eps})
\rightarrow \R \quad \mbox{by} \quad
d_1^{\eps}(\bsig,\btau)=
\int_{\OS^{\eps}}\cC^{-1}\bsig(\vecx):\btau(\vecx)d \vecx, \\
&d_2^{\eps}:\mathrm{H}^1(\OF) \times \mathrm{H}^1(\OF) \rightarrow \R \quad \mbox{by} \quad
d_2^{\eps}(p,q)=
\int_{\OF^{\eps}}\frac{1}{\rF c^2}p(\vecx)q(\vecx)d\vecx, \\
&b^{\eps}:\mathbf{H}(\bdiv, \OS^{\eps}) \times \bcQ^{\eps} \rightarrow \R \quad \mbox{by} \quad
b^{\eps}(\btau,\bs)
 =\int_{\OS^{\eps}}\btau(\vecx):\bs(\vecx)d\vecx, \\
 &\ell^{\eps}: \mathbf{H}(\bdiv, \OS^{\eps}) \rightarrow \R \quad \mbox{by} \quad
\ell^{\eps}(\btau^{\eps})= 
 \int_{\OS^{\eps}} \frac{1}{\rS} \bF \cdot \bdiv \btau^{\eps},
\end{align}
\end{subequations}
and denote
\begin{subequations}\label{equ:all bil for}
\begin{align}
&
a^{\eps}\big((\bsig,p),(\btau,q)\big) :=
a_1^{\eps}(\bsig,\btau)+a_2^{\eps}(p,q),
\label{equ:a a1 a2}
\\
&A^{\eps}\big((\bsig,p),(\btau,q)\big) 
:=
a^{\eps}\big((\bsig,p),(\btau,q)\big)
+
d_1^{\eps}(\bsig,\btau)+d_2^{\eps}(p,q),
\label{def.aa}\\
&\bbA^{\eps}\Big(\big((\bsig^{\eps}, p^{\eps}), \br^{\eps}\big), \big((\btau^{\eps}, q^{\eps}), \bs^{\eps}\big)\Big)
 :=A^{\eps}((\bsig^{\eps},p^{\eps}),(\btau^{\eps},q^{\eps}))
+b^{\eps}(\btau^{\eps},\br^{\eps})
+b^{\eps}(\bsig^{\eps},\bs^{\eps}),\label{def.bbA.ep}
\\[1ex]
&\bbB^{\eps}\Big(\big((\bsig^{\eps}, p^{\eps}), \br^{\eps}\big), \big((\btau^{\eps}, q^{\eps}), \bs^{\eps}\big)\Big)
 :=d_1^{\eps}(\bsig^{\eps},\btau^{\eps})+d_2^{\eps}(p^{\eps},q^{\eps})
+b^{\eps}(\btau^{\eps},\br^{\eps})
+b^{\eps}(\bsig^{\eps},\bs^{\eps}),\label{def.bbB.ep}\\[1ex]
&\mathbb D^{\eps}\Big(\big((\bsig^{\eps}, p^{\eps}), \br^{\eps}\big), \big((\btau^{\eps}, q^{\eps}), \bs^{\eps}\big)\Big)=
\bbA^{\eps}\Big(\big((\bsig^{\eps}, p^{\eps}), \br^{\eps}\big), \big((\btau^{\eps}, q^{\eps}), \bs^{\eps}\big)\Big)\notag\\
& \qquad \qquad \qquad \qquad \qquad \qquad  \qquad \quad - (1+\mu^2) \bbB^{\eps}\Big(\big((\bsig^{\eps}, p^{\eps}), \br^{\eps}\big), \big((\btau^{\eps}, q^{\eps}), \bs^{\eps}\big)\Big)\label{def.DD.eps}.
\end{align}
\end{subequations}
We point out that the kernel 
$\ker(a^{\eps}):= \set{(\btau,q)\in \mathbb{X}^{\eps}:
\ a^{\eps}\big((\btau, q), (\btau, q)\big) = 0}$ 
of the bilinear form $a^{\eps}(\cdot, \cdot)$ in $\mathbb X^{\eps}$ is given by 
\[
 \ker(a^{\eps})= \set{(\btau,q)\in \mathbb{X}^{\eps}:
\ \text{$\bdiv\btau=\0\text{ in }\OS^{\eps}$ and $q$ constant in $\OF^{\eps}$}}.
\]
Let us introduce the orthogonal complement to $\ker(a^{\eps}) \times \bcQ^{\eps}$ in $\mathbb{X}^{\eps} \times \bcQ^{\eps} $ with respect to the bilinear form $\mathbb{B}^{\eps}$
by
\begin{align}\label{equ:ker a eps} 
[\ker(a^{\eps}) \times \bcQ^{\eps}]^{\perp_{\mathbb{B^{\eps}}}}:=\{\big((\bsig^{\eps},p^{\eps}),\br^{\eps}\big) \in \mathbb{X}^{\eps} \times \bcQ^{\eps}: \mathbb{B}^{\eps}&\Big(\big((\bsig^{\eps},p^{\eps}\big),r^{\eps}),
(\big(\btau^{\eps},q^{\eps}),\bs^{\eps}\big)\Big)=0
\nonumber\\
&  \forall \big((\btau^{\eps},q^{\eps}),\bs^{\eps}\big)\in \ker(a^{\eps}) \times \bcQ^{\eps} \}.
\end{align}

With all the bilinear forms defined in~\eqref{equ:all bil for}, we are now able to write the
weak formulation of problem~\eqref{equ:perturbed prob}.
Considering the body force $\bF\in \L^2(B_R)^d$, it is direct\dela{show that
 the variational formulation of \eqref{equ:perturbed prob} is given by (see \cite{gmm2011,MMR3} for more details): Find $(\bsig^{\eps},p^{\eps})\in \mathbb{X}^{\eps}$ and $\br^{\eps} \in \bcQ^{\eps}$ such that
\begin{equation}\label{varForm1.ep}
\begin{aligned}
a^{\eps}\big((\bsig^{\eps},p^{\eps}),(\btau^{\eps},q^{\eps})\big)
- \mu^2 \Big( b^{\eps} \big((\btau^{\eps},q^{\eps}),\br^{\eps}\big) + d^{\eps} \big((\bsig^{\eps},p^{\eps}),(\btau^{\eps},q^{\eps})\big)\Big) &= \underbrace{\displaystyle\int_{\OS^{\eps}}\frac{1}{\rS} \bF \cdot \bdiv \btau^{\eps}}_{l^{\eps}(\btau^\eps):=},
 \\[1ex]
b^{\eps}\big((\bsig^{\eps},p^{\eps}),\bs^{\eps}\big)
& = 0,
\end{aligned}
\end{equation}
for all $(\btau^{\eps},q^{\eps})\in\mathbb{X}^{\eps}$ and $\bs^{\eps}\in\bcQ^{\eps}$.
It will be convenient} to write \eqref{equ:perturbed prob} in the  equivalent tensorial form (see \cite{gmm2011,MMR3} for more details): Find $\big((\bsig^{\eps}, p^{\eps}), \br^{\eps}\big)\in \mathbb{X}^{\eps}\times\bcQ^{\eps}$ such that 
\begin{equation}\label{compactVar.eps}
\begin{array}{l}
 \mathbb D^{\eps}\Big(\big((\bsig^{\eps}, p^{\eps}), \br^{\eps}\big), \big((\btau^{\eps}, q^{\eps}), \bs^{\eps}\big)\Big)=  \ell^{\eps}(\btau^{\eps}) \quad \forall \,
 ((\btau^{\eps}, q^{\eps}), \bs^{\eps})\in\mathbb{X}^{\eps}\times\bcQ^{\eps}.
 \end{array}
\end{equation}

\begin{rem} 
\begin{itemize}
\item[(i)]	We note that variational formulation \eqref{compactVar.eps} has been designed in terms of the stress tensor $\bsig$ of the solid (not displacement vector field $\bu$) and pressure $p$ of the fluid. However, using the equilibrium equation \eqref{dual-problem_1a.ep} one can recover the displacement vector field.
\item[(ii)] According to \cite{MMT} and references cited therein, for $\OS \in C^2,$ the displacement field $\bu$ that solves this problem belongs to $\H^{1+\alpha}(\OS)^d$ for all $\alpha \in (1/2,1)$.
\dela{
 for the unperturbed problem we denote $D^0=D,$ where $D$ can be the domain or the solution space. It is noteworthy to mention that $\eps=0$ corresponds to  the unperturbed problem (see Problem \ref{compactVar.rep} in Section \ref{sec.shape.der} which is true for all 
$((\btau, q), \bs)\in\mathbb{X}\times\bcQ$). When there is no ambiguity, we use $D$ for $\OS$
or $\OF$ or $\mathbb{X}$ or $\bcQ$.
\item[(iv)] Similar to \eqref{def.adb}, the bilinear forms $a_1,\,d_1,\, b,\,\ell$ and $d_2$ can be defined 
on domains $\OS$ and $\OF$ respectively for $\eps=0.$ The bilinear maps $\mathbb D,\,\bbA$ and $\bbB$ corresponds for the unperturbed problem (when 
$\eps=0$).}
 \end{itemize}
\end{rem}

\section{The solution operators~$\mathcal{S}^\eps$}\label{sec:spe sol ope}
The shape calculus technique to be used in the next section
is originated from shape optimization in the deterministic framework; see \cite{Has,SokZol92}. 
For this reason and for simplicity, we temporarily escape randomness and consider only deterministic
perturbed interfaces.

\subsection{Representation of perturbed deterministic model}
We now present some properties of perturbed interfaces which are required for the subsequent analysis.
Let $\tilde{\kappa}$ and $\tilde{\n}$ be any smoothness preserving extension of $\kappa$ and $\n$ on $\mathbb{R}^3 $ such that $\tilde{\kappa} \in \mathbb{W}^{1,\infty}(\R^3)\cap C^{2,1}(\R^3).$
For~$\eps\ge0$, we define $\bT^{\eps}:\R^3 \rightarrow \R^3$ by
\begin{align}\label{equ:T eps define}
\bT^{\eps}(\vecx)=\vecx+\eps \tilde{\kappa}(\vecx)\tilde{\n}(\vecx) \quad\forall \, \vecx \in \R^3.
\end{align}
Without loss of generality we assume that the extension $\tilde{\kappa}$ vanishes outside a sufficiently large ball~$B_R$ (with origin as the centre and radius $R$) containing $\OS^{\eps} \cup \OF^{\eps}$ for $0 \leq \eps \leq \eps^0,$ for some $\eps^0>0.$ This implies that the perturbation mapping $\bT^{\eps}(\vecx)$ 
is an identity in the complement $B_R^c := \R^3 \setminus \overline{B_R}$, i.e.
\begin{equation}\label{equ:T e cond}
\bT^{\eps}(\vecx) 
=
\vecx
\quad
\forall \, \vecx
\in  {B_{R}^c}.
\end{equation}
For ease of notation, throughout the paper, we denote by $\O^{\eps}:=\bT^{\eps}(\O)$ either
$\OS^{\eps}$ or $\OF^{\eps}$ when there is no ambiguity.
For convenience, we abbreviate
\begin{equation}\label{equ:V def.main}
\mathbf{V}(\vecx)
:=
\tilde\kappa(\vecx)\tilde\vecn{}(\vecx),
\quad 
\vecx\in \R^3.  
\end{equation}
In~\cite{SokZol92}, the field $\mathbf{V}$ is called \emph{the velocity 
field} of the mapping $\bT^\eps$ and in \cite{H10,HSS08}, $\mathbf{V}$ is known as the boundary perturbation field in the normal direction.
From \eqref{equ:V def.main}, one can observe that $\kappa= \langle \bV,\bn \rangle.$
\begin{rem}\label{rem:eps 0}
When $\eps=0,$ we omit the superscript $\eps$ in the notations of the spaces, norms and bilinear forms.
\end{rem} 

\subsection{The solution operators and their spectra}\label{subsec.spectralT}
In this section we define the solution operators $\mathcal{S}^{\eps}$ and study their properties which will
be used to prove the existence of the material derivative in the next section. These properties have
been studied in~\cite{MR3376135,MMR3}. However, since we need to apply these results with different
values of~$\eps>0$ and pass to the limit when~$\eps\to0$, it is important to check the
estimates to ensure that they are independent of~$\eps$.

For each $\epsilon \geq 0,$ let us introduce the operator 
\begin{align*}
 \mathcal{S}^{\eps}:\ \mathbb{X}^{\eps} \times\bcQ^{\eps} & \longrightarrow \mathbb{X}^{\eps}\times\bcQ^{\eps},
\\
\big((\boldsymbol{F}^{\eps},f^{\eps}),\boldsymbol{G}^{\eps}\big) & \longmapsto 
\big(\left(\bsig^{\eps}_*,p^{\eps}_*\right),\br^{\eps}_*\big)
=
\mathcal{S}^{\eps}\big((\boldsymbol{F}^{\eps},f^{\eps}),\boldsymbol{G}^{\eps}\big)
\end{align*}
where $\big((\bsig^{\eps}_*,p^{\eps}_*),\br^{\eps}_*\big)\in
\mathbb{X}^{\eps}\times\bcQ^{\eps}$ satisfies, for all
$\big((\btau^{\eps},q^{\eps}),\bs^{\eps}\big)\in\mathbb{X}^{\eps} \times \bcQ^{\eps}$,
\begin{align}\label{defT.eps}
\bbA^{\eps}\Big(
\big(\left(\bsig^{\eps}_*,p^{\eps}_*\right),\br^{\eps}_*\big),
\big((\btau^{\eps},q^{\eps}),\bs^{\eps}\big)
\Big)
&= \bbB^{\eps}\Big(\big((\boldsymbol{F}^{\eps},f^{\eps}),\boldsymbol{G}^{\eps}\big),\big((\btau^{\eps},q^{\eps}),\bs^{\eps}\big)\Big)
\dela{\quad \forall \big((\btau^{\eps},q^{\eps}),\bs^{\eps}\big)\in \mathbb X^{\eps} \times \bcQ^{\eps}}.
\end{align}
The well definiteness and symmetry with respect to the bilinear
form~$\bbA^{\eps}(\cdot,\cdot)$ of this operator~$\mathcal{S}^{\eps}$ is proved
in~\cite[Lemma~3.2]{MMT}.
To focus on the solution of the problem, we first characterize the spectral properties of the
operator $\mathcal{S}^{\eps}$ for each $\eps\ge0.$

\begin{lem}\label{specT}
For~$\eps\ge0$, the spectrum $\sp(\mathcal{S}^{\eps})$ of $\mathcal{S}^{\eps}$ decomposes as follows 
 \[
  \sp(\mathcal{S}^{\eps}) = \set{0, 1} \cup \set{\eta_k(\eps)}_{k\in \mathbb{N}}
 \]
where~$\set{\eta_k(\eps)}_{k\in \mathbb{N}}$ satisfying
\begin{align} \label{ev.dec}
1 > \eta_1(\eps) \ge \cdots \ge \eta_k(\eps) \ge \cdots > 0
\end{align}
is a decreasing sequence of finite-multiplicity eigenvalues of~$\mathcal{S}^{\eps}$ which
converges to~0. 
Moreover,~$1$ is an infinite-multiplicity eigenvalue of $\mathcal{S}^{\eps}$ while $0$ is not 
an eigenvalue. The associated eigenspace of the eigenvalue~1 is~$\ker(a^{\eps}) \times
\bcQ^{\eps}$.
\end{lem}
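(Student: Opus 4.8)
The plan is to exhibit $\mathcal{S}^{\eps}$ as a bounded self-adjoint operator and to read its spectrum off a generalised eigenvalue problem. By \cite[Lemma~3.2]{MMT} the map is well defined and symmetric with respect to $\bbA^{\eps}$; together with the Babuška--Brezzi stability of the saddle-point form $\bbA^{\eps}$ this makes $\mathcal{S}^{\eps}$ bounded and self-adjoint on $\mathbb{X}^{\eps}\times\bcQ^{\eps}$. The key algebraic fact, read off from \eqref{def.bbA.ep}--\eqref{def.bbB.ep}, is that $\bbA^{\eps}-\bbB^{\eps}=a^{\eps}$. Hence, testing \eqref{defT.eps} against the eigenvector $w=\big((\bsig,p),\br\big)$ of $\mathcal{S}^{\eps}w=\eta w$ gives
\[
\eta\,\bbA^{\eps}(w,w)=\bbB^{\eps}(w,w),
\qquad
\bbA^{\eps}(w,w)-\bbB^{\eps}(w,w)=a_1^{\eps}(\bsig,\bsig)+a_2^{\eps}(p,p)\ge 0 .
\]
Since $a^{\eps}$ is positive semidefinite, this already confines $\sp(\mathcal{S}^{\eps})\subset[0,1]$.

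To isolate the eigenvalue $1$, I would rewrite the eigenrelation as $(1-\eta)\,\bbB^{\eps}(w,v)=\eta\,a^{\eps}\big((\bsig,p),(\btau,q)\big)$ for every $v=\big((\btau,q),\bs\big)$. Setting $\eta=1$ forces $a^{\eps}\big((\bsig,p),\cdot\big)\equiv 0$, that is $(\bsig,p)\in\ker(a^{\eps})$, while $\br$ remains arbitrary; conversely any $w\in\ker(a^{\eps})\times\bcQ^{\eps}$ satisfies $\bbA^{\eps}(w,v)=\bbB^{\eps}(w,v)$ and is therefore fixed by $\mathcal{S}^{\eps}$. Thus the eigenspace of $1$ is exactly $\ker(a^{\eps})\times\bcQ^{\eps}$, and it is infinite-dimensional because $\bcQ^{\eps}$ is.

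It then remains to study $\mathcal{S}^{\eps}$ on the $\bbB^{\eps}$-orthogonal complement $\kerab$ from \eqref{equ:ker a eps} (on the eigenspace of $1$ the forms $\bbA^{\eps}$ and $\bbB^{\eps}$ coincide, so this is also the $\bbA^{\eps}$-orthogonal complement and hence $\mathcal{S}^{\eps}$-invariant by self-adjointness). Testing the defining relation of this complement against $\big((\0,0),\bs\big)$ yields $b^{\eps}(\bsig,\bs)=0$ for all skew $\bs$, so $\bsig$ is symmetric there; consequently $b^{\eps}(\bsig,\br)=0$ and $\bbB^{\eps}(w,w)=d_1^{\eps}(\bsig,\bsig)+d_2^{\eps}(p,p)\ge 0$, which makes the remaining eigenvalues nonnegative. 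They are in fact strictly positive and $0$ is not an eigenvalue of $\mathcal{S}^{\eps}$: $\mathcal{S}^{\eps}w=0$ gives $\bbB^{\eps}(w,\cdot)\equiv 0$, whence $\bsig=0$, $p=0$ and $\br=0$ after testing with compactly supported fields that respect the transmission constraint on $\S^{\eps}$. The decisive ingredient is compactness of $\mathcal{S}^{\eps}$ on this complement, which I would derive from the elliptic regularity of the underlying transmission problem (the displacement lies in $\H^{1+\alpha}(\OS)^d$, so $\bsig$ gains $\HsdivOS$-regularity) followed by the Rellich compact embeddings. The spectral theorem for compact, self-adjoint, positive operators then produces the decreasing sequence $1>\eta_1(\eps)\ge\eta_2(\eps)\ge\cdots>0$ of finite-multiplicity eigenvalues accumulating only at $0$; since the complement is infinite-dimensional, $0\in\sp(\mathcal{S}^{\eps})$, and by the injectivity just noted it is not an eigenvalue. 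Assembling the three contributions gives the stated decomposition.

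The hard part is not the bookkeeping above but the compactness together with the demand that every constant entering it be independent of $\eps$: this is precisely why the stability and regularity estimates of \cite{MMR3,MR3376135} have to be re-examined rather than quoted verbatim. A further delicate point is the indefinite, saddle-point nature of $\bbA^{\eps}$, which is not positive definite (it degenerates in the purely skew directions $\{\0\}\times\bcQ^{\eps}$); the skew multiplier $\br$ must therefore be controlled through the Brezzi inf-sup stability of the mixed system rather than through $\bbA^{\eps}$ itself, and the self-adjoint spectral machinery has to be set up on the complement where this control is available.
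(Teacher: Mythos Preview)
The paper does not give an independent proof of this lemma: it simply refers to \cite[Section~4]{MMR3}. Your outline is essentially the argument carried out there --- split off the eigenspace of~$1$ as $\ker(a^{\eps})\times\bcQ^{\eps}$ via the identity $\bbA^{\eps}-\bbB^{\eps}=a^{\eps}$, pass to the $\bbB^{\eps}$-orthogonal complement, observe that $\bsig$ is symmetric there so that $\bbB^{\eps}(w,w)=d_1^{\eps}(\bsig,\bsig)+d_2^{\eps}(p,p)\ge 0$, and invoke compactness plus the spectral theorem for the restricted operator. So your approach matches the cited reference.

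Two small corrections. First, your closing remark that ``every constant entering it be independent of $\eps$'' is misplaced: Lemma~\ref{specT} is a statement for each fixed $\eps$ and requires no uniformity; the $\eps$-independence issue belongs to Proposition~\ref{prop.spT} and Proposition~\ref{specT1}. Second, the line ``this already confines $\sp(\mathcal{S}^{\eps})\subset[0,1]$'' is premature as written: the Rayleigh-quotient inequality $\eta=\bbB^{\eps}(w,w)/\bbA^{\eps}(w,w)\le 1$ presupposes $\bbA^{\eps}(w,w)>0$, which fails on the saddle-point form in the skew directions, as you yourself point out in the last paragraph. The clean way is the one you arrive at in the end: establish the bound $0<\eta<1$ only on the complement $\kerab$, where both $\bbA^{\eps}$ and $\bbB^{\eps}$ are genuinely positive, rather than asserting it globally at the outset.
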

\begin{proof}
See \cite[Section~4]{MMR3}.
\end{proof}

It is proved in \cite[Theorem~3.1]{MMT} that if the input frequency~$\mu$,
see~\eqref{def.DD.eps}, is chosen such that~$1/(1+\mu^2)\notin\sp(\mathcal{S}^{\eps})$ then the
problem~\eqref{compactVar.eps} is well posed. To ensure that such a choice of~$\mu$ is possible
for all~$\eps\ge0$ sufficiently small, it is necessary to prove that there exists~$\eps_0>0$
such that 
\[
\overline{\bigcup_{0\le\eps\le\eps_0} \sp(\mathcal{S}^\eps)} \not= [0,1].
\]
In fact, we will prove a stronger result that there exists~$\delta>0$ such that,
for all nonnegative~$\eps$ sufficiently small, all the eigenvalues~$\eta_k(\eps)$ 
are crowded to the left of~$\eta_1(0)+\delta$.
This result, which has its own interest, is stated in the following proposition.
\begin{prop}\label{prop.spT}
For each $\delta\in (0,1-\eta_1(0))$, there exists $\eps_0>0$ such that 
\begin{align} \label{spT.prop1.mod}
(\eta_1(0)+\delta,1) \subset [0,1]\setminus B 
\end{align}
where $B:=\overline{\bigcup_{0 \le \eps \leq \eps_0}
  \sp(\mathcal{S}^{\eps})}$.
\end{prop}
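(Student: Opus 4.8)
The plan is to reduce the statement to a one–sided estimate on the largest interior eigenvalue and then prove that estimate by transporting the $\eps$–dependent eigenproblems to the fixed reference configuration. First I would note that it suffices to produce $\eps_0>0$ with $\sup_{0\le\eps\le\eps_0}\eta_1(\eps)\le\eta_1(0)+\tfrac{\delta}{2}$, i.e. to show $\limsup_{\eps\to0^+}\eta_1(\eps)\le\eta_1(0)$. Indeed, by Lemma~\ref{specT} we have $\sp(\mathcal{S}^{\eps})\subset\{1\}\cup[0,\eta_1(\eps)]$, so such a bound gives $\bigcup_{0\le\eps\le\eps_0}\sp(\mathcal{S}^{\eps})\subset\{1\}\cup[0,\eta_1(0)+\tfrac{\delta}{2}]$; the right–hand set is closed, hence $B$ is contained in it and \eqref{spT.prop1.mod} follows since $(\eta_1(0)+\delta,1)$ is then disjoint from $B$.

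Next I would remove the moving domain. Using the diffeomorphism $\bT^{\eps}$ of \eqref{equ:T eps define}, I introduce a row–wise contravariant Piola transform on the $\mathbf{H}(\bdiv)$–component of $\bcW^{\eps}$ together with composition $q\mapsto q\circ\bT^{\eps}$ on the $\mathrm H^1$– and $\L^2$–components, assembling a bounded isomorphism $\Phi^{\eps}:\mathbb X^{\eps}\times\bcQ^{\eps}\to\mathbb X\times\bcQ$ whose norm and that of its inverse are bounded uniformly for small $\eps$, because $\bT^{\eps}\to\mathrm{Id}$, $D\bT^{\eps}\to\bI$ and $\det D\bT^{\eps}\to1$ uniformly at rate $O(\eps)$. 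The Piola transform is chosen precisely so that it respects the normal–trace constraints defining $\mathbb X^{\eps}$ and sends divergence–free tensors (and constants) to divergence–free tensors (and constants); consequently $\Phi^{\eps}$ carries $\ker(a^{\eps})\times\bcQ^{\eps}$ onto the \emph{fixed} subspace $K:=\ker(a^{0})\times\bcQ$, independent of $\eps$. Pushing the forms forward, $\widetilde{\bbA}^{\eps}:=\bbA^{\eps}(\Phi^{\eps,-1}\cdot,\Phi^{\eps,-1}\cdot)$ and $\widetilde{\bbB}^{\eps}$ likewise, a change of variables yields $|\widetilde{\bbA}^{\eps}(u,v)-\bbA^{0}(u,v)|\le C\eps\,\norma{u}\,\norma{v}$ and the analogous bound for $\bbB$. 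Since $\bbA^{\eps}-\bbB^{\eps}=a^{\eps}$ vanishes whenever its first argument lies in $\ker(a^{\eps})\times\bcQ^{\eps}$, after transport $\widetilde{\bbA}^{\eps}=\widetilde{\bbB}^{\eps}$ on $K\times(\mathbb X\times\bcQ)$; thus $1$ remains an eigenvalue with eigenspace exactly $K$ for every $\eps$, and the $\eta_k(\eps)$ are the eigenvalues of the generalized self–adjoint problem on $W^{\eps}:=K^{\perp_{\widetilde{\bbA}^{\eps}}}$.

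On $W^{\eps}$ one has $\eta_1(\eps)=\max_{0\ne u\in W^{\eps}}\widetilde{\bbB}^{\eps}(u,u)/\widetilde{\bbA}^{\eps}(u,u)$, and I would argue by contradiction. If the bound fails there are $\eps_n\to0$ and normalized maximizers $u_n$ (with $\widetilde{\bbA}^{\eps_n}(u_n,u_n)=1$) satisfying $\widetilde{\bbB}^{\eps_n}(u_n,v)=\eta_1(\eps_n)\widetilde{\bbA}^{\eps_n}(u_n,v)$ for all $v$ and $\eta_1(\eps_n)\to L>\eta_1(0)$, with $L\le1$. By uniform equivalence the $u_n$ are bounded in $\mathbb X\times\bcQ$, so along a subsequence $u_n\rightharpoonup u_*$, and the $O(\eps)$ form–convergence together with weak convergence yields $\bbB^{0}(u_*,v)=L\,\bbA^{0}(u_*,v)$ for all $v$ and $u_*\in K^{\perp_{\bbA^{0}}}$. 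The decisive point is $u_*\ne0$: writing $u_n=(\bsig_n,p_n,\br_n)$, the $\eps$–uniform $\mathrm H^{1+\alpha}$–regularity of the solution operator recalled above forces $\bsig_n$ to be bounded in $\mathrm H^{\alpha}(\OS)^{\dxd}$ and $p_n$ in $\mathrm H^{1}(\OF)$, so by the compact embeddings $\mathrm H^{\alpha}\hookrightarrow\hookrightarrow\L^2$ and $\mathrm H^{1}\hookrightarrow\hookrightarrow\L^2$ the stress and pressure parts converge strongly in $\L^2$; as $\widetilde{\bbB}^{\eps}$ is built only from such $\L^2$ pairings, $\widetilde{\bbB}^{\eps_n}(u_n,u_n)\to\bbB^{0}(u_*,u_*)=L>0$, whence $u_*\ne0$. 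Then $u_*$ is an eigenfunction of $\mathcal{S}^{0}$ with eigenvalue $L$: by Lemma~\ref{specT} this is impossible for $L\in(\eta_1(0),1)$, while $L=1$ would force $u_*\in K\cap K^{\perp_{\bbA^{0}}}=\{0\}$. This contradiction gives $\limsup_{\eps\to0^+}\eta_1(\eps)\le\eta_1(0)$ and hence the proposition.

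The hard part is the non–compactness of $\mathbf{H}(\bdiv;\OS)\hookrightarrow\L^2$: the quadratic form $\bbB$ contains the $\L^2$ stress term and the stress–rotation pairing, so it is not weakly continuous, and the step $u_*\ne0$ cannot be obtained from boundedness alone. The genuine work is therefore to secure the $\eps$–uniform well–posedness, a priori, and elliptic–regularity estimates for $\mathcal{S}^{\eps}$ on $W^{\eps}$ — that is, to verify that the constants in the cited results survive the transport to the fixed domain and do not degenerate as $\eps\to0$. This same uniformity is what legitimizes the form–convergence $\widetilde{\bbA}^{\eps}\to\bbA^{0}$, $\widetilde{\bbB}^{\eps}\to\bbB^{0}$ used throughout.
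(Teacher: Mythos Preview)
Your reduction in the first paragraph matches the paper's: both arguments boil down to $\limsup_{\eps\to0^+}\eta_1(\eps)\le\eta_1(0)$ (the paper phrases it as $\liminf_{\eps\to0}1/\eta_1(\eps)\ge1/\eta_1(0)$). From there the routes diverge. The paper does not extract eigenvectors or invoke compactness at all; instead it expands the Rayleigh quotient directly. Writing $\mathbb{R}^{\eps}(u):=\bbA^{\eps}(u,u)/\bbB^{\eps}(u,u)$ and recalling $1/\eta_1(\eps)=\min_{u\in[\ker(a^{\eps})\times\bcQ^{\eps}]^{\perp_{\bbB^{\eps}}}}\mathbb{R}^{\eps}(u)$, a change of variables (plain composition with $\bT^{\eps}$, not Piola) gives $\mathbb{R}^{\eps}(u^{\eps})=\mathbb{R}(u^{\eps}\circ\bT^{\eps})+\eps\,G(u^{\eps}\circ\bT^{\eps})$ for a bounded remainder $G$; taking the infimum over the transported constraint set and letting $\eps\to0$ finishes in one line. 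No weak limits, no regularity theory, no bootstrap.

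Your approach is viable in principle but considerably heavier. The contradiction argument hinges on $u_*\ne0$, and for that you need strong $\L^2$ convergence of the stress component, which in turn rests on $\eps$-uniform $\H^{\alpha}$ regularity of the eigenvectors---a result you correctly flag as ``the genuine work'' but do not supply. The paper's argument sidesteps this entirely. On the other hand, your use of the Piola transform is more careful than the paper's bare composition for respecting the $\mathbf H(\bdiv)$ structure and the normal-trace constraints defining $\mathbb X^{\eps}$; the paper simply asserts that composition with $\bT^{\eps}$ bijects $[\ker(a)\times\bcQ]^{\perp_{\bbB}}$ onto $[\ker(a^{\eps})\times\bcQ^{\eps}]^{\perp_{\bbB^{\eps}}}$, which is not obvious for composition alone. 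So each route has a soft spot: yours needs regularity you have not established, while the paper's needs the remainder $G$ to be bounded below on the constraint set and the bijection claim to be made precise---both lighter demands than uniform elliptic regularity for a coupled saddle-point eigenproblem.
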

\begin{proof}
Noting the decrease property~\eqref{ev.dec}, it suffices to prove that
\begin{equation}\label{equ:eta eps 0}
\liminf_{\eps \rightarrow 0} 
\dfrac{1}{\eta_1(\eps)} \geq \dfrac{1}{\eta_1(0)}.
\end{equation}
Indeed, assume that~\eqref{equ:eta eps 0} holds. Let us show that~\eqref{spT.prop1.mod} holds.
For each $\delta\in (0,1-\eta_1(0))$, 
let~$\delta_1={\delta}/{[\eta_1^2(0)+\eta_1(0)\delta]} > 0$.
By the definition of $\liminf$, there exists $\eps_0>0$ such that
\begin{align*}
\dfrac{1}{\eta_1(0)}-\delta_1 < \dfrac{1}{\eta_1(\eps)} \quad \forall \, 0 < \eps \leq \eps_0,
\end{align*}
proving that
\[ 
\eta_1(\eps) < \eta_1(0)+\delta \quad \forall\, 0 < \eps \leq \eps_0,
\] 
which concludes \eqref{spT.prop1.mod} due to~\eqref{ev.dec}. 

We now prove~\eqref{equ:eta eps 0}.
Since~$\ker(a^\eps)\times\bcQ^\eps$ is the eigenspace associated with the eigenvalue~1, see
Lemma~\ref{specT}, the eigenspace associated with~$\eta_1(\eps)$ is a subspace
of~$\kerab$ which is defined in~\eqref{equ:ker a eps}.
As a consequence, we derive that for all~$((\bsig^{\eps},p^{\eps}),\br^{\eps}) \in \kerab$
\[
\mathbb{A}^{\eps}
\Big(
\big((\bsig_1^{\eps},p_1^{\eps}),\br_1^{\eps}\big),
\big((\bsig^{\eps},p^{\eps}),\br^{\eps}\big)
\Big) 
=
\dfrac{1}{\eta_1(\eps)} 
\mathbb{B}^{\eps}
\Big(
\big((\bsig_1^{\eps},p_1^{\eps}),\br_1^{\eps}\big),
\big((\bsig^{\eps},p^{\eps}),\br^{\eps}\big)
\Big)
\]
where $\big((\bsig_1^{\eps},p_1^{\eps}),\br_1^{\eps}\big)$ is an eigenvector associated with
the eigenvalue~$\eta_1(\eps)$. The Rayleigh quotient gives
\[
\dfrac{1}{\eta_1(\eps)}
=
\min_{\0 \neq ((\bsig^{\eps},p^{\eps}),\br^{\eps}) \in \kerab} 
\dfrac{\mathbb{A}^{\eps}\Big(((\bsig^{\eps},p^{\eps}),\br^{\eps}),((\bsig^{\eps},p^{\eps}),\br^{\eps})\Big)}{\mathbb{B}^{\eps}\Big(((\bsig^{\eps},p^{\eps}),\br^{\eps}),((\bsig^{\eps},p^{\eps}),\br^{\eps})\Big)}.
\]
Denoting
\[
\mathbb{R}^{\eps}((\bsig^{\eps},p^{\eps}),\br^{\eps})
:=
\dfrac{
\mathbb{A}^{\eps}\Big(((\bsig^{\eps},p^{\eps}),\br^{\eps}),((\bsig^{\eps},p^{\eps}),\br^{\eps})\Big)}
{\mathbb{B}^{\eps}\Big(((\bsig^{\eps},p^{\eps}),\br^{\eps}),((\bsig^{\eps},p^{\eps}),\br^{\eps})\Big)}
\]
and using~\eqref{def.adb} and~\eqref{equ:all bil for} we deduce
\begin{equation}\label{sum.defi}
\mathbb{R}^{\eps}((\bsig^{\eps},p^{\eps}),\br^{\eps})=
\dfrac{a_1^{\eps}(\bsig^{\eps},\bsig^{\eps})+a_2^{\eps}(p^{\eps},p^{\eps})+d_1^{\eps}(\bsig^{\eps},\bsig^{\eps})+d_2^{\eps}(p^{\eps},p^{\eps})+2b^{\eps}(\bsig^{\eps},\br^{\eps})}{d_1^{\eps}(\bsig^{\eps},\bsig^{\eps})+d_2^{\eps}(p^{\eps},p^{\eps})+2b^{\eps}(\bsig^{\eps},\br^{\eps})}.
\end{equation}
We will show that (noting the notation convention in Remark~\ref{rem:eps 0})
\begin{align}\label{sum.Ri}
\mathbb{R}^{\eps}((\bsig^{\eps},p^{\eps}),\br^{\eps})
=
\mathbb{R}((\bsig^{\eps}\circ\bT^{\eps},p^{\eps}\circ \bT^{\eps}),\br^{\eps}\circ \bT^{\eps})
+\eps G((\bsig^{\eps}\circ\bT^{\eps},p^{\eps}\circ \bT^{\eps}),\br^{\eps}\circ \bT^{\eps})
\end{align}
where $G$ is a mapping from~$[\ker(a)\times\bcQ]^{\perp_{\mathbb{B}}}$ to~$\R$.
Letting~$\bcQ_a^\eps:=\ker(a^{\eps}) \times \bcQ^{\eps}$,
then~$[\bcQ_a^\eps]^{\perp_{\mathbb{B^\eps}}}$. Since
\begin{align*}
\left.\bT^{\eps}\right|_{\bcQ_a} : \bcQ_a \to \bcQ_a^\eps
\quad\text{and}\quad
\left.\bT^{\eps}\right|_{[\bcQ_a]^{\perp_{\mathbb{B}}}} 
: [\bcQ_a]^{\perp_{\mathbb{B}}}
\to [\bcQ_a^\eps]^{\perp_{\mathbb{B^\eps}}},
\end{align*}
are bijective, it follows from~\eqref{sum.Ri} that
\begin{align*}
\inf_{\0 \neq ((\bsig^{\eps},p^{\eps}),\br^{\eps}) \in [\bcQ_a^\eps]^{\perp_{\mathbb{B^\eps}}}} 
\mathbb{R}^{\eps}((\bsig^{\eps},p^{\eps}),\br^{\eps})
&\ge
\inf_{\0 \neq ((\bsig,p), \br ) \in [\bcQ_a]^{\perp_{\mathbb{B}}}}
\mathbb{R}((\bsig,p),\br)
\\
&\quad
+
\eps
\inf_{\0 \neq ((\bsig,p), \br ) \in [\bcQ_a]^{\perp_{\mathbb{B}}}}
G((\bsig,p),\br)
\end{align*}
which proves \eqref{equ:eta eps 0} by letting~$\eps\to0$.
\par
We now move to prove \eqref{sum.Ri}. Following the notations of Kronecker product mentioned in
Appendix \ref{sec.tensor}, we now
employ  change of variables $\vecx=\bT^{\eps}(\vecy)$ to the bilinear form
$a_1^\eps(\cdot,\cdot)$. We
recall here Appendix \ref{sec.app.shape} to introduce
$\gamma(\eps,\cdot),J_{\bT^\eps},J_{\bT^\eps}^{-1},\mathcal{A}$ and $\tilde{\mathcal{A}}$.  
\dela{There is a bijection between the elements of $\mathbb X^{\eps} \times \bcQ^{\eps}$ and
$\mathbb X \times \bcQ$ via the map $\bT^{\eps}.$ Hence for
$\big((\btau^{\eps},q^{\eps}),\bs^{\eps}  \big) \in \mathbb X^{\eps} \times \bcQ^{\eps},$ we
denote the elements $\big((\btau^{\eps} \circ \bT^{\eps},q^{\eps} \circ \bT^{\eps}),\bs^{\eps}
\circ \bT^{\eps} \big)$ by $\big((\btau,q),\bs\big)$ in $\mathbb{X} \times \bcQ.$}Using $|\bdiv
(\bsig(\vecx))|^2=| \cL_{\bf I}(\bsig(\vecx))|^2$ in $a_1^{\eps}(\bsig^{\eps},\bsig^{\eps})$
and equation \eqref{equ:vbn 4} and Lemma \ref{lem.conv.sig}, we see that 
\begin{align*}
a_1^{\eps}(\bsig^{\eps},\bsig^{\eps})
&
=\int_{\OS}\frac{1}{\rS}\gamma(\eps,\vecy)
\Big|\cL_{J_{\bT^{\eps}}^{-1}}
(\bsig^{\eps} \circ \bT^{\eps}(\vecy))\Big|^2 d \vecy\notag\\
&
=\int_{\OS}\frac{1}{\rS}\big(1+\eps \tilde{\gamma}(\eps,\vecy)\big)
\Big|\cL_{\bf I}
(\bsig^{\eps} \circ \bT^{\eps}(\vecy))
+\eps \cL_{\hat{\bV}_1}(\bsig^{\eps} \circ \bT^{\eps}(\vecy))
\Big|^2 d \vecy\notag\\
&
=\int_{\OS}\frac{1}{\rS}\big(1+\eps \tilde{\gamma}(\eps,\vecy)\big)
\Big\{
\Big|\cL_{\bf I}
(\bsig^{\eps} \circ \bT^{\eps}(\vecy))\Big|^2
 +\eps^2 \Big|\cL_{\hat{\bV}_1}(\bsig^{\eps} \circ \bT^{\eps}(\vecy))
\Big|^2
\notag\\ &\quad \quad
+2 \eps 
\cL_{\bf I} (\bsig^{\eps} \circ \bT^{\eps}(\vecy)): 
 \cL_{\hat{\bV}_1}(\bsig^{\eps} \circ \bT^{\eps}(\vecy))
\Big\} d \vecy\notag\\
&
=\int_{\OS}\frac{1}{\rS}
\Big|\cL_{\bf I}
(\bsig^{\eps} \circ \bT^{\eps}(\vecy))\Big|^2 d \vecy
 + \int_{\OS}\frac{1}{\rS} \eps \tilde{\gamma}(\eps,\vecy) \Big|\cL_{\bf I}
(\bsig^{\eps} \circ \bT^{\eps}(\vecy))\Big|^2 d \vecy 
\notag\\ & \quad
 + \eps \int_{\OS} \frac{1}{\rS} 
 \big(1+\eps \tilde{\gamma}(\eps,\vecy)\big) \Big[2
\cL_{\bf I} (\bsig^{\eps} \circ \bT^{\eps}(\vecy)): 
 \cL_{\hat{\bV}_1}(\bsig^{\eps} \circ \bT^{\eps}(\vecy))+ \eps \Big|\cL_{\hat{\bV}_1}(\bsig^{\eps} \circ \bT^{\eps}(\vecy)) \Big|^2
\Big\} d \vecy\notag\\
&
=a_1(\bsig^{\eps} \circ \bT^{\eps},\bsig^{\eps} \circ \bT^{\eps})+ \eps g_1 (\bsig^{\eps} \circ \bT^{\eps})
\end{align*}
where
\begin{align*}
g_1(\bsig^{\eps} \circ \bT^{\eps})&= 
  \int_{\OS}\frac{1}{\rS} \tilde{\gamma}(\eps,\vecy) \Big|\cL_{\bf I}
(\bsig^{\eps} \circ \bT^{\eps}(\vecy))\Big|^2 d \vecy 
 + \int_{\OS} \frac{1}{\rS} 
 \big(1+\eps \tilde{\gamma}(\eps,\vecy)\big) \Big[2
\cL_{\bf I} (\bsig^{\eps} \circ \bT^{\eps}(\vecy)) 
\notag\\ & \quad \quad
 :\cL_{\hat{\bV}_1}(\bsig^{\eps} \circ \bT^{\eps}(\vecy))+ \eps \Big|\cL_{\hat{\bV}_1}(\bsig^{\eps} \circ \bT^{\eps}(\vecy)) \Big|^2
\Big\} d \vecy
\end{align*}
Repeating the similar arguments for each bounded bilinear maps on the
right hand side of \eqref{sum.defi}, one achieve
\begin{align*}
a_2^{\eps}(p^{\eps},p^{\eps})
&=a_2(p^{\eps} \circ \bT^{\eps},p^{\eps} \circ \bT^{\eps})+\eps g_2(p^{\eps} \circ \bT^{\eps}),\\
d_1^{\eps}(\bsig^{\eps},\bsig^{\eps})
&=d_1(\bsig^{\eps} \circ \bT^{\eps},\bsig^{\eps} \circ \bT^{\eps})+\eps g_3(\bsig^{\eps} \circ \bT^{\eps}),
\\
d_2^{\eps}(p^{\eps},p^{\eps}) &=d_2(p^{\eps} \circ \bT^{\eps},p^{\eps} \circ \bT^{\eps})+\eps g_4(p^{\eps} \circ \bT^{\eps}),\\
b^{\eps}(\bsig^{\eps},\br^{\eps})&=b({\bsig^{\eps} \circ \bT^{\eps},\br \circ \bT^{\eps}})+\eps g_5({\bsig^{\eps} \circ \bT^{\eps},\br \circ \bT^{\eps}}) .
\end{align*}
for some bounded functions $g_i;i=2,\dots,5$.
This proves \eqref{sum.Ri}, finishing the proof of the proposition.
 \end{proof}

The following result is similar to~\cite[Proposition 2.4]{MR3376135}.
However, here it is necessary to check that the constant is independent of~$\eps$. 
\begin{prop}\label{specT1}
If $1/(1+\mu^2)>\eta_1(0)$ then
there exist~$\eps_0>0$ and a constant $C$ depending only on~$\eps_0$ such that
for all~$\eps\in[0,\eps_0]$ the following inequality holds
\begin{equation}\label{resolvent.eps}
\norma{\big(\frac{\mathbf{I}}{1 + \mu^2}-\mathcal{S}^{\eps}\big) \big( (\bsig^{\eps}, p^{\eps}), \br^{\eps} \big)}_{\eps}
\ge\, C \,  \norma{\big( (\bsig^{\eps}, p^{\eps}), \br^{\eps}\big)}_{\eps} \quad \forall \big( (\bsig^{\eps}, p^{\eps}), \br^{\eps}\big)\in 
\mathbb{X}^{\eps} \times \bcQ^{\eps}.
\end{equation}
\end{prop}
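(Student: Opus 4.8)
The plan is to reduce the claim to the corresponding estimate at $\eps=0$, which is available from~\cite[Proposition~2.4]{MR3376135}, and then to upgrade it to an $\eps$--uniform statement by a perturbation argument built on the change of variables $\bT^{\eps}$ already used in the proof of Proposition~\ref{prop.spT}. First I would record that, under the hypothesis $1/(1+\mu^2)>\eta_1(0)$, the number $1/(1+\mu^2)$ lies strictly between $\eta_1(0)$ and $1$, so by Lemma~\ref{specT} it avoids $\sp(\mathcal{S}^{0})=\set{0,1}\cup\set{\eta_k(0)}_k$; consequently $\bigl(\tfrac{1}{1+\mu^2}\mathbf I-\mathcal{S}^{0}\bigr)$ is boundedly invertible on $\mathbb X\times\bcQ$. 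Set $M_0:=\bigl\|(\tfrac{1}{1+\mu^2}\mathbf I-\mathcal{S}^{0})^{-1}\bigr\|$; equivalently,~\eqref{resolvent.eps} holds at $\eps=0$ with constant $1/M_0$, which is exactly the content transferred from~\cite[Proposition~2.4]{MR3376135}.

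Next I would transport everything to the fixed reference configuration through $\bT^{\eps}$. Using the bijection between $\mathbb X^{\eps}\times\bcQ^{\eps}$ and $\mathbb X\times\bcQ$ together with the uniform bounds on $\gamma(\eps,\cdot)$, $J_{\bT^{\eps}}$ and $J_{\bT^{\eps}}^{-1}$ recalled in the proof of Proposition~\ref{prop.spT}, the norm $\norma{\cdot}_{\eps}$ of an element and the norm $\norma{\cdot}$ of its pullback are equivalent with constants independent of $\eps$ for $0\le\eps\le\eps^0$. Exactly as in~\eqref{sum.Ri}, the substitution $\vecx=\bT^{\eps}(\vecy)$ turns each bilinear form building $\bbA^{\eps}$ and $\bbB^{\eps}$ into its $\eps=0$ counterpart plus an $O(\eps)$ remainder with bounded coefficients, so that on the reference space the pulled--back forms $\bbA^{\eps}_{\mathrm{ref}},\bbB^{\eps}_{\mathrm{ref}}$ satisfy
\[
\bigl|\bbA^{\eps}_{\mathrm{ref}}(\hat x,\hat y)-\bbA(\hat x,\hat y)\bigr|+\bigl|\bbB^{\eps}_{\mathrm{ref}}(\hat x,\hat y)-\bbB(\hat x,\hat y)\bigr|\le C\eps\,\norma{\hat x}\,\norma{\hat y}\qquad\forall\,\hat x,\hat y\in\mathbb X\times\bcQ .
\]
Writing $\widehat{\mathcal{S}}^{\eps}$ for the pullback of $\mathcal{S}^{\eps}$, characterised by $\bbA^{\eps}_{\mathrm{ref}}(\widehat{\mathcal{S}}^{\eps}\hat x,\hat y)=\bbB^{\eps}_{\mathrm{ref}}(\hat x,\hat y)$, the inf--sup stability of $\bbA$ at $\eps=0$ (part of the well--posedness of $\mathcal{S}^{0}$ in~\cite{MMT}) combined with the displayed bound yields both a uniform operator bound $\|\widehat{\mathcal{S}}^{\eps}\|\le C$ and the operator--norm convergence $\|\widehat{\mathcal{S}}^{\eps}-\mathcal{S}^{0}\|\le C\eps$ for all sufficiently small $\eps$.

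I would then close by a Neumann series. From the factorisation
\[
\tfrac{1}{1+\mu^2}\mathbf I-\widehat{\mathcal{S}}^{\eps}=\Bigl(\tfrac{1}{1+\mu^2}\mathbf I-\mathcal{S}^{0}\Bigr)\Bigl[\mathbf I-\bigl(\tfrac{1}{1+\mu^2}\mathbf I-\mathcal{S}^{0}\bigr)^{-1}\bigl(\widehat{\mathcal{S}}^{\eps}-\mathcal{S}^{0}\bigr)\Bigr],
\]
choosing $\eps_0>0$ so small that $M_0\,C\,\eps_0<\tfrac12$ makes the bracket invertible with inverse bounded by $2$; hence $\bigl(\tfrac{1}{1+\mu^2}\mathbf I-\widehat{\mathcal{S}}^{\eps}\bigr)$ is boundedly invertible with inverse of norm at most $2M_0$ for every $\eps\in[0,\eps_0]$. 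This is precisely~\eqref{resolvent.eps} on the reference space with constant $1/(2M_0)$, and transferring it back through the uniform norm equivalence of the $\bT^{\eps}$--pullback only multiplies the constant by $\eps$--independent factors, giving the claim on $\mathbb X^{\eps}\times\bcQ^{\eps}$.

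The hard part will be the second step: proving that the $\bT^{\eps}$--pullbacks of $\bbA^{\eps}$ and $\bbB^{\eps}$ approximate $\bbA$ and $\bbB$ at rate $O(\eps)$ in the operator norm on the fixed space $\mathbb X\times\bcQ$, with all constants controlled solely by $\eps^0$ and the fixed data $\kappa,\vecn$. This rests on the Jacobian expansions and the tensorial change-of-variables identities (the $\cL_{(\cdot)}$ calculus of~\eqref{sum.Ri}), and in particular on $\gamma(\eps,\cdot)$ staying uniformly bounded away from zero so that $\bbA^{\eps}_{\mathrm{ref}}$ inherits the inf--sup stability of $\bbA$; once this is secured, the abstract perturbation of a boundedly invertible operator is routine. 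An equivalent route bypasses the explicit inverse by contradiction: were no uniform $C$ to exist, one would extract $\eps_n\downarrow0$ and pullbacks $\hat x_n$ with $\norma{\hat x_n}=1$ and, by the same form--convergence estimate, $\bigl(\tfrac{1}{1+\mu^2}\mathbf I-\mathcal{S}^{0}\bigr)\hat x_n\to\0$, contradicting the invertibility at $\eps=0$; this variant hinges on exactly the same displayed estimate, confirming that it is the genuine obstacle.
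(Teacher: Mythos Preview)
Your argument is correct and takes a genuinely different route from the paper. The paper applies the resolvent estimate of~\cite[Proposition~2.4]{MR3376135} \emph{at every}~$\eps$, obtaining a lower bound $C^\ast(\eps)\,\delta_\mu(\mathcal S^{\eps})$, and then shows each factor is uniformly bounded below: for $\delta_\mu(\mathcal S^{\eps})$ it invokes Proposition~\ref{prop.spT} (the crowding of eigenvalues toward~$\eta_1(0)$), and for $C^\ast(\eps)$ it traces the constant through a chain of references (\cite{MR3376135}\,$\to$\,\cite{MedMorRod13}\,$\to$\,\cite{BrezziFortin91}\,$\to$\,\cite{Gal11}) to argue it depends continuously on~$|\OS^\eps|$. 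You instead use the resolvent estimate only at~$\eps=0$, then pull back through~$\bT^{\eps}$, establish operator-norm convergence $\|\widehat{\mathcal S}^{\eps}-\mathcal S^{0}\|\le C\eps$, and close with a Neumann series. Your approach is more self-contained---it avoids the literature chase for~$C^\ast(\eps)$ and does not even need Proposition~\ref{prop.spT}---and yields an explicit constant $1/(2M_0)$ up to norm-equivalence factors; it also extends verbatim to any~$\mu$ with $1/(1+\mu^2)\notin\sp(\mathcal S^{0})$, not just $1/(1+\mu^2)>\eta_1(0)$. The paper's approach, in turn, isolates the spectral gap~$\delta_\mu$ as an independent quantity, which is informative in its own right.

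One point deserves care in your write-up: the ``bijection between $\mathbb X^{\eps}\times\bcQ^{\eps}$ and $\mathbb X\times\bcQ$'' via plain composition with~$\bT^{\eps}$ is not entirely innocent, because the constraints $\btau\bn^{\eps}=\0$ on~$\GN^{\eps}$ and $\btau\bn^{\eps}+q\bn^{\eps}=\0$ on~$\S^{\eps}$ involve the perturbed normal~$\bn^{\eps}$, which pulls back to $J_{\bT^{\eps}}^{-\top}\bn/|J_{\bT^{\eps}}^{-\top}\bn|$ rather than~$\bn$. The paper itself passes over this in the proof of Proposition~\ref{prop.spT}, so you are on the same footing; but if you want a fully clean argument you should either use a Piola-type transform for the stress (which preserves normal traces exactly), or note that the pulled-back constraint differs from the reference one by~$O(\eps)$ and absorb the resulting space mismatch into the same perturbation estimate.
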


\begin{proof}	
Proposition~2.4 in~\cite{MR3376135} states that 
\begin{equation}\label{con.eps.ind}
	\norma{\big(\frac{\mathbf{I}}{1 + \mu^2}-\mathcal{S}^{\eps}\big) \big( (\bsig^{\eps},
p^{\eps}), \br^{\eps} \big)}_{\eps} \ge\, C^\ast(\eps)
	\delta_\mu(\mathcal{S}^{\eps})\,  \norma{\big( (\bsig^{\eps}, p^{\eps}), \br^{\eps}\big)}_{\eps} \quad \forall \big( (\bsig^{\eps}, p^{\eps}), \br^{\eps}\big)\in 
	\mathbb{X}^{\eps} \times \bcQ^{\eps},
	\end{equation}  
where~$C^\ast(\eps)$ is a positive constant independent of~$ \big( (\bsig^{\eps}, p^{\eps}),
\br^{\eps}\big)$ and
\[
0<\delta_\mu(\mathcal{S}^{\eps}) :=\dist\big(\frac{1}{1+ \mu^2},  \sp(\mathcal{S}^{\eps}) \big)<1
\] 
represents the distance between ${1}/(1+ \mu^2)$ and the spectrum  of $\mathcal{S}^{\eps}$.
First we show that~$\delta_{\mu}(\mathcal{S}^{\eps})$ is bounded below by a constant
independent of~$\eps$. Due to the assumption ${1}/(1+\mu^2)>\eta_1(0)$, 
we can invoke Proposition~\ref{prop.spT} to obtain $\eps_0>0$ satisfying
$$
\eta_1(0) < \eta_1(\eps) <\eta_1(0)+\delta \le \dfrac{1}{1+\mu^2}<1\quad \forall \eps \in
[0,\eps_0]
$$
where~$\delta$ is some positive number.
By virtue of~\eqref{ev.dec}, we have
\begin{align*}
\eta_k(\eps) 
<\eta_1(0)+\delta
<\dfrac{1}{1+\mu^2}<1 \quad \mbox{for all} \quad k \geq 1 \quad \mbox{and}\quad \eps \in [0,\eps_0].
\end{align*}
Hence,
\begin{align*}\label{dist.eq.eps}
\delta_\mu(\mathcal{S}^{\eps}) 
&=
\min_{k \geq 1} \Big(1-\dfrac{1}{1+\mu^2},\dfrac{1}{1+\mu^2}-\eta_k(\eps)\Big)
\geq \min \Big(\dfrac{\mu^2}{1+\mu^2},\dfrac{1}{1+\mu^2}-(\eta_1(0)+\delta)\Big):=c.
\end{align*}

Next we trace the constant~$C^\ast(\eps)$ in~\eqref{con.eps.ind} to show that it is bounded
below by a constant independent
of~$\eps$. Following the proof of Proposition~2.4 in~\cite{MR3376135} this constant~$C^\ast$
depends on the constant~$c_1$ in Proposition~2.1 of the same paper. This constant in turn
depends on~$\alpha$ in~\cite[Lemma~2.1]{MedMorRod13}. This constant~$\alpha$ depends on the
constant~$c$ in~\cite[Proposition~IV.3.1]{BrezziFortin91} and~$c_2$ in~\cite[Lemma~2.2]{Gat06}.
Proposition~IV.3.1 of~\cite{BrezziFortin91} is in fact Lemma~III.3.2 of~\cite{Gal11}. Tracing
all these constants one can check that they depend continuously on the measure of the domain,
namely~$|\OS^\eps|$. Since~$|\OS^\eps|$ depends continuously on~$\eps$, see \cite{Gal11}, so does the
constant~$C^\ast(\eps)$. Because~$\eps\in[0,\eps_0]$, this continuity implies
that~$C^\ast(\eps)$ has a minimum value which is positive. This proves the proposition.
\end{proof}

\begin{prop}\label{wellposed}
Let $\eps_0$ and $B$ be given in Proposition \ref{specT1}. If $(\frac{1}{1+\mu^2},1) \subset
[0,1]\setminus B,$  then there exists a positive constant $C$ depending only on~$\eps_0$
such that, for any $\bF\in (\mathrm{L}^2(B_R))^d$ and any~$\eps\in[0,\eps_0]$, the solution
$\big( (\bsig^{\eps}, p^{\eps}), \br^{\eps}\big)\in \mathbb{X}^{\eps}\times \bcQ^{\eps}$ of
\eqref{compactVar.eps} satisfies
\begin{equation}\label{estimsigr.Teps}
 \norma{\big( (\bsig^{\eps}\circ \bT^{\eps}, p^{\eps}\circ \bT^{\eps}), \br^{\eps}\circ \bT^{\eps}\big)} \leq \frac{C}{ 1+\mu^2}  \|\bF\|_{0,\OS}.
\end{equation}
\end{prop}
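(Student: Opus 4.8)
The plan is to reduce the a~priori estimate \eqref{estimsigr.Teps} to the uniform resolvent bound \eqref{resolvent.eps} already established in Proposition~\ref{specT1}, and then to transport the resulting inequality from the perturbed domain back to the reference domain through the change of variables $\bT^{\eps}$. Throughout I write $U^{\eps}:=\big((\bsig^{\eps},p^{\eps}),\br^{\eps}\big)$ and, for a generic test element, $V:=\big((\btau^{\eps},q^{\eps}),\bs^{\eps}\big)$.

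First I would recast \eqref{compactVar.eps} in operator form. By the defining relation \eqref{defT.eps} one has $\bbB^{\eps}(U^{\eps},V)=\bbA^{\eps}(\mathcal{S}^{\eps}U^{\eps},V)$, so that $\mathbb D^{\eps}=\bbA^{\eps}-(1+\mu^2)\bbB^{\eps}$ turns \eqref{compactVar.eps} into
\[
(1+\mu^2)\,\bbA^{\eps}\Big(\big(\tfrac{\mathbf{I}}{1+\mu^2}-\mathcal{S}^{\eps}\big)U^{\eps},\,V\Big)=\ell^{\eps}(\btau^{\eps})\qquad\forall\,V\in\mathbb{X}^{\eps}\times\bcQ^{\eps}.
\]
Because $\bbA^{\eps}$ is nondegenerate (this is precisely what makes $\mathcal{S}^{\eps}$ well defined, see \cite[Lemma~3.2]{MMT}) it satisfies an inf--sup condition, and I would first check that its inf--sup constant is independent of $\eps\in[0,\eps_0]$ by the same continuity-of-constants bookkeeping used in the proof of Proposition~\ref{specT1}. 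The hypothesis $(\tfrac{1}{1+\mu^2},1)\subset[0,1]\setminus B$ guarantees $1/(1+\mu^2)>\eta_1(0)$, so Proposition~\ref{specT1} applies; combining its lower bound \eqref{resolvent.eps} for $\tfrac{\mathbf{I}}{1+\mu^2}-\mathcal{S}^{\eps}$ with the inf--sup of $\bbA^{\eps}$ yields an inf--sup condition for $\mathbb D^{\eps}$ whose constant is a fixed multiple of $(1+\mu^2)$, uniformly in $\eps$. Testing against the solution then gives
\[
\norma{U^{\eps}}_{\eps}\le \frac{C}{1+\mu^2}\,\sup_{V\neq\0}\frac{|\ell^{\eps}(\btau^{\eps})|}{\norma{V}_{\eps}}.
\]

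It then remains to bound the right-hand side by the data and to change variables. From the definition of $\ell^{\eps}$ and of $\norma{\cdot}_{\eps}$, Cauchy--Schwarz gives $|\ell^{\eps}(\btau^{\eps})|\le \tfrac{1}{\rS}\|\bF\|_{0,\OS^{\eps}}\,\norma{V}_{\eps}$; since every $\OS^{\eps}$ lies in the fixed ball $B_R$ for $\eps\in[0,\eps_0]$, the supremum above is controlled by $\tfrac{1}{\rS}\|\bF\|_{0,\OS}$ up to an $\eps$-independent factor. Finally, to replace $\norma{U^{\eps}}_{\eps}$ (a norm on the perturbed domain) by $\norma{U^{\eps}\circ\bT^{\eps}}$ (the reference-domain norm appearing in \eqref{estimsigr.Teps}), I would apply $\vecx=\bT^{\eps}(\vecy)$ together with the properties of $\gamma(\eps,\cdot)$ and $J_{\bT^{\eps}}$ recorded in Appendix~\ref{sec.app.shape}: for small $\eps$ these Jacobian factors differ from the identity by $O(\eps)$, so the two product norms are equivalent with constants that may be chosen uniform on $[0,\eps_0]$. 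Chaining the three estimates produces \eqref{estimsigr.Teps}.

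The calculations themselves are routine; the real work---and the only genuine obstacle---is to ensure that \emph{every} constant entering the chain is independent of $\eps$. The resolvent constant is already uniform by Proposition~\ref{specT1}, and the norm-equivalence constants are uniform because $\eps$ is small; the remaining point to verify carefully is the $\eps$-uniform inf--sup (nondegeneracy) of $\bbA^{\eps}$, which I would obtain by tracing the constants of \cite[Lemma~3.2]{MMT} and noting, exactly as in Proposition~\ref{specT1}, that they depend continuously on $|\OS^{\eps}|$ and hence on $\eps$.
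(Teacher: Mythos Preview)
Your proposal is correct and follows essentially the same route as the paper. The paper's proof is terser: it simply invokes \cite[Theorem~3.1]{MMT} to obtain $\norma{U^{\eps}}_{\eps}\le C(1+\mu^2)^{-1}\|\bF\|_{0,\OS}$ with the $\eps$-independent constant supplied by Proposition~\ref{specT1}, and then applies the change of variables $\bT^{\eps}$ to reach \eqref{estimsigr.Teps}; what you have written is precisely an unpacking of that cited theorem (operator form, $\eps$-uniform inf--sup of $\bbA^{\eps}$, resolvent bound) together with the same final transport step.
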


\begin{proof} 
By following the proof of~\cite[Theorem~3.1]{MMT} we can prove that
\[
\norma{\big( (\bsig^{\eps}, p^{\eps}), \br^{\eps}\big)}_{\eps} \leq \frac{C}{ 1+\mu^2}
\|\bF\|_{0,\OS},
\]
where the constant~$C$ comes from Proposition~\ref{specT1} which is independent of~$\eps$.
Then by change of variable formula, we have \eqref{estimsigr.Teps}. This completes the proof.
\end{proof}

%

\section{Shape calculus}\label{shape.sec}
In the present section we derive the shape derivative and shape Hessian for the solution
$((\bsig^\eps,p^{\eps}),\br^{\eps})$ of \eqref{equ:perturbed prob}.
\subsection{Material derivative}\label{sec.mat.der}
This section is devoted to a rigorous proof and characterization of the material derivative of
\eqref{compactVar.eps}. 
\begin{prop} \label{thm.conv}
Let $((\bsig^{\eps},p^{\eps}),\br^{\eps}) \in  \mathbb X^{\eps} \times \bcQ^{\eps} $ be solution of \eqref{compactVar.eps} and $((\bsig,p),\br) \in  \mathbb X \times \bcQ $ be solution of the unperturbed problem (i.e., \eqref{compactVar.eps} for $\eps=0$). Assume $\bF \in (\mathrm{L}^2(B_R))^d \cap (\mathbb{X} \times \bcQ)^{*}$ and $\kappa \in C^1(\partial \OS)$.
Then
\begin{align}\label{eqn:con}
\lim_{\eps \rightarrow 0} \norma{ \big((\bsig^{\eps}\circ \bT^{\eps},p^{\eps}\circ \bT^{\eps}),\br^{\eps}\circ \bT^{\eps}\big) - \big((\bsig,p),\br\big) }=0.
\end{align}
\end{prop}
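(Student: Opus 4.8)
The plan is to pull the perturbed problem~\eqref{compactVar.eps} back to the reference configuration via the diffeomorphism~$\bT^{\eps}$, to compare the resulting transported problem with the unperturbed one, and then to close the argument with the $\eps$-uniform stability of Proposition~\ref{specT1} together with the $\eps$-uniform bound of Proposition~\ref{wellposed}. Write $z:=\big((\bsig,p),\br\big)$ for the unperturbed solution, $\tilde z^{\eps}:=\big((\bsig^{\eps}\circ\bT^{\eps},p^{\eps}\circ\bT^{\eps}),\br^{\eps}\circ\bT^{\eps}\big)$ for the pulled-back perturbed solution, and $w:=\big((\btau,q),\bs\big)$ for a generic test element. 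First I would record that, since the restrictions of $\bT^{\eps}$ established in the proof of Proposition~\ref{prop.spT} are bijections between the reference and perturbed spaces, the element $\tilde z^{\eps}$ belongs to $\mathbb{X}\times\bcQ$ and, after the change of variables $\vecx=\bT^{\eps}(\vecy)$ in~\eqref{compactVar.eps}, solves a transported problem
\[
\mathbb{D}^{\eps}_{T}\big(\tilde z^{\eps},w\big)=\ell^{\eps}_{T}(\btau)\qquad\forall\, w\in\mathbb{X}\times\bcQ,
\]
where $\mathbb{D}^{\eps}_{T}$ and $\ell^{\eps}_{T}$ denote the pulled-back bilinear form and load.

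The second step is to expand the transported data to first order in~$\eps$. Exactly the change-of-variables computations already carried out for $a_1^{\eps},a_2^{\eps},d_1^{\eps},d_2^{\eps}$ and $b^{\eps}$ in the proof of Proposition~\ref{prop.spT} (the derivation leading to~\eqref{sum.Ri}) give
\[
\mathbb{D}^{\eps}_{T}\big(\tilde z^{\eps},w\big)=\mathbb{D}\big(\tilde z^{\eps},w\big)+\eps\,\mathcal{R}^{\eps}\big(\tilde z^{\eps},w\big),\qquad \ell^{\eps}_{T}(\btau)=\ell(\btau)+\eps\,\lambda^{\eps}(\btau).
\]
Because $\kappa\in C^1(\partial\OS)$, the factors $\gamma(\eps,\cdot)$, $J_{\bT^{\eps}}^{-1}$ and the field $\hat{\bV}_1$ are uniformly bounded for $\eps\in[0,\eps_0]$, so that $|\mathcal{R}^{\eps}(\tilde z^{\eps},w)|\le C\,\norma{\tilde z^{\eps}}\,\norma{w}$ and, since $\bF\in(\mathbb{X}\times\bcQ)^{*}$, also $|\lambda^{\eps}(\btau)|\le C\,\norma{w}$, with $C$ independent of~$\eps$.

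The third step is the error equation. As the unperturbed solution satisfies $\mathbb{D}(z,w)=\ell(\btau)$, subtracting gives, for all $w\in\mathbb{X}\times\bcQ$,
\[
\mathbb{D}\big(\tilde z^{\eps}-z,\,w\big)=\eps\big(\lambda^{\eps}(\btau)-\mathcal{R}^{\eps}(\tilde z^{\eps},w)\big).
\]
By Proposition~\ref{wellposed} one has $\norma{\tilde z^{\eps}}\le C(1+\mu^2)^{-1}\|\bF\|_{0,\OS}$ uniformly in~$\eps$, so the right-hand side is bounded in modulus by $C\eps\,\norma{w}$. On the other hand, the $\eps=0$ case of Proposition~\ref{specT1}, via the resolvent estimate~\eqref{resolvent.eps} and the identity $\mathbb{D}(\cdot,\cdot)=(1+\mu^2)\,\bbA\big((\tfrac{\mathbf{I}}{1+\mu^2}-\mathcal{S})\,\cdot\,,\cdot\big)$ together with the isomorphism property of $\bbA$, yields the inf--sup bound $\sup_{w\neq\0}\mathbb{D}(v,w)/\norma{w}\ge c\,\norma{v}$ with $c>0$. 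Applying this with $v=\tilde z^{\eps}-z$ produces $\norma{\tilde z^{\eps}-z}\le C\eps$, which tends to~$0$ and establishes~\eqref{eqn:con}.

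The main obstacle is the second step: one must certify that the transported forms differ from the unperturbed ones by a genuinely $O(\eps)$ remainder whose constant does not blow up as $\eps\to0$, which requires careful tensor bookkeeping for the row-wise divergence and the skew-symmetric coupling $b$ under the pullback through the Kronecker-product operators $\cL_{J_{\bT^{\eps}}^{-1}}$ of the Appendix. The regularity $\kappa\in C^1(\partial\OS)$ is precisely what keeps the expansion coefficients bounded, while the spectral hypothesis $1/(1+\mu^2)>\eta_1(0)$ underlying Propositions~\ref{specT1} and~\ref{wellposed} is what makes both the a priori bound on $\tilde z^{\eps}$ and the inf--sup constant independent of~$\eps$; only this $\eps$-uniformity lets the factor $\eps$ in the error equation drive the whole difference to zero.
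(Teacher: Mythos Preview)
Your proposal is correct and follows essentially the same strategy as the paper: pull back the perturbed problem via $\bT^{\eps}$, compare the transported bilinear form and load with the unperturbed ones using the change-of-variables expansions of Proposition~\ref{prop.spT} (and Lemmas~\ref{lem.conv.sig}--\ref{lem.conv.tensor}), then close with the inf--sup/resolvent estimate of Proposition~\ref{specT1} and the uniform a~priori bound of Proposition~\ref{wellposed}. Your version is in fact slightly sharper, since by keeping the explicit factor $\eps$ in the remainder you obtain the quantitative rate $\norma{\tilde z^{\eps}-z}\le C\eps$, whereas the paper only records the qualitative limit~\eqref{eqn:con} via a $\limsup$ argument.
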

\begin{proof}
To prove \eqref{eqn:con}, we first aim to prove that 
\begin{align}\label{eqn:conv A}
\bbA
\Big(
\big(\mathbf{I}-(1+\mu^2)\mathcal{S}\big)
\big(
(\bsig^{\eps}\circ \bT^{\eps}-\bsig, p^{\eps}\circ \bT^{\eps}-p), 
\br^{\eps}\circ \bT^{\eps}-\br 
\big),((\btau,q),\bs)\Big) \rightarrow 0
\quad \mbox{as} \quad \eps \rightarrow 0.
\end{align}
As a next step, we deduce from the well-posedness of the unperturbed problem (i.e., \eqref{compactVar.eps} for~$\eps=0$) the existence of~$C>0$, independent of $\eps$ such that
\begin{align*}
&C\norma{\big(\mathbf{I}-(1+\mu^2)\mathcal{S}\big)
\big(
(\bsig^{\eps}\circ \bT^{\eps}-\bsig, p^{\eps}\circ \bT^{\eps}-p), 
\br^{\eps}\circ \bT^{\eps}-\br 
\big)} \notag\\
&\leq \sup_{\0 \neq ((\btau,q),\bs) \in \bcX \times \bcQ 
}\dfrac{ \bbA \Big(\big(\mathbf{I}-(1+\mu^2)\mathcal{S}\big)
\big(
(\bsig^{\eps}\circ \bT^{\eps}-\bsig, p^{\eps}\circ \bT^{\eps}-p), 
\br^{\eps}\circ \bT^{\eps}-\br 
\big),((\btau,q),\bs)
\Big) }{\norma{((\btau,q),\bs)}}.
\end{align*}
Hence, there exists $((\btau,q),\bs) \in  \bcX \times \bcQ $ such that
\begin{align}\label{arg:inf-sup1}
&\norma{\big(\mathbf{I}-(1+\mu^2)\mathcal{S}\big)
\big(
(\bsig^{\eps}\circ \bT^{\eps}-\bsig, p^{\eps}\circ \bT^{\eps}-p), 
\br^{\eps}\circ \bT^{\eps}-\br 
\big)}-\gamma \notag\\
&\leq \dfrac{ \bbA \Big(\big(\mathbf{I}-(1+\mu^2)\mathcal{S}\big)
\big(
(\bsig^{\eps}\circ \bT^{\eps}-\bsig, p^{\eps}\circ \bT^{\eps}-p), 
\br^{\eps}\circ \bT^{\eps}-\br 
\big),((\btau,q),\bs) 
\Big) }{C\norma{((\btau,q),\bs)}},
\end{align}
where $\gamma>0$ is arbitrary.
On letting $\eps \rightarrow 0$, using \eqref{eqn:conv A} we have
\begin{align}\label{arg:inf-sup2}
&\limsup_{\eps \rightarrow 0} \norma{\big(\mathbf{I}-(1+\mu^2)\mathcal{S}\big)
\big(
(\bsig^{\eps}\circ \bT^{\eps}-\bsig, p^{\eps}\circ \bT^{\eps}-p), 
\br^{\eps}\circ \bT^{\eps}-\br 
\big)}-\gamma \leq 0.
\end{align}
Since $\gamma>0$ is arbitrary,
\begin{align}\label{arg:inf-sup3}
&\limsup_{\eps \rightarrow 0} \norma{\big(\mathbf{I}-(1+\mu^2)\mathcal{S}\big)
\big(
(\bsig^{\eps}\circ \bT^{\eps}-\bsig, p^{\eps}\circ \bT^{\eps}-p), 
\br^{\eps}\circ \bT^{\eps}-\br 
\big)}= 0.
\end{align}
We then use equation \eqref{resolvent.eps} in Proposition \ref{specT1} to obtain \eqref{eqn:con}.

We are now left to prove \eqref{eqn:conv A}. To begin with, we first estimate
\begin{align}\label{eq.sub.D}
\mathbb{D}^{\eps}\Big(\big((\bsig^{\eps}, p^{\eps}), \br^{\eps}\big), \big((\btau^{\eps}, q^{\eps}),\bs^{\eps}\big)\Big)-\mathbb{D}\Big(\big((\bsig, p), \br\big), \big((\btau, q), s\big)\Big)=\ell^{\eps}(\btau^{\eps})-\ell(\btau),
\end{align}
Using the fact that $\bF \cdot \bdiv \btau(\vecy)=\big({\bf I} \otimes \bF(\vecy)\big):(\nabla \otimes \btau(\vecy)),$ and exploiting Lemmas \ref{lem.conv.sig}--\ref{lem.conv.tensor},
we have for all $\btau \in \bcW,$
 \begin{align*}
& \ell^{\eps}(\btau^{\eps})-\ell(\btau)=
 \int_{\OS}\frac{1}{\rS}
 \Big[\Big(\gamma(\eps,\vecy)\big( J_{\bT^\eps}^{-1} \otimes \bF(\bT^{\eps}\vecy)\big) : \big(\nabla \otimes  \btau(\vecy)\big) \Big)
 -\bF \cdot \bdiv \btau(\vecy) \Big]
 d \vecy\notag\\
 &=\int_{\OS}\frac{1}{\rS}
 \Big[\Big(\gamma(\eps,\vecy)\big(J_{\bT^\eps}^{-1} \otimes \bF(\bT^{\eps}\vecy)\big): \big(\nabla \otimes \btau(\vecy)\big) \Big)
 -\Big(\big({\bf I} \otimes \bF(\vecy)\big): \big(\nabla \otimes  \btau(\vecy)\big) \Big)\Big]
 d \vecy \notag\\
&\quad \rightarrow 0 \quad \mbox{as} \quad \eps \rightarrow 0. 
  \end{align*}
Passing to the limit as $\eps \rightarrow 0$ in \eqref{eq.sub.D}, we arrive at \eqref{eqn:conv A}. This completes the proof.
\end{proof}


\begin{lem}\label{lem:equ mat der}
There exists a unique solution~$((\hat{\bsig},\hat{p}),\hat{\br})\in\mathbb{X}\times\bcQ$ to the
following equation for all $\big((\btau,q),\bs\big) \in \mathbb{X} \times \bcQ$
\begin{align}\label{eq.mat.der}
&\mathbb{D}\Big(((\hat{\bsig},\hat{p}),\hat{\br}),((\btau,q),s)\Big)=- \int_{\OS}\frac{1}{\rS}
\Big[\tilde{\mathcal{A}}'(0,\vecy)\bsig(\vecy):\big(\mathbf{I} \otimes (\mathbf{I} \cdot \nabla )^{\top}\btau^{\top}(\vecy)\big)^{\top}\Big]d \vecy\notag\\&
\quad+ \int_{\OS}\frac{1}{\rS}
\Big( \bdiv \bV(\vecy) \big(\mathbf{I} \otimes \bF(\vecy)\big)+\tilde{\bV}_1(\vecy)\otimes \bF(\vecy)+\mathbf{I} \otimes \big(\nabla \bF(\vecy)\cdot \bV\big)
 \Big)
 :\big(\nabla \otimes \btau^{\top}(\vecy)\big) d \vecy \notag\\&
-\int_{\OS} \frac{1}{\rF}\Big[ \mathcal{A}'(0,\vecy) \nabla p (\vecy) \cdot \nabla q(\vecy)\Big] d \vecy 
+ \mu^2 \Big[ \int_{\OS} \cC^{-1}\gamma_1(\vecy) \bsig(\vecy):\btau(\vecy) d \vecy\notag\\&
\quad+ \int_{\OF} \dfrac{1}{\rF c^2} \gamma_1(\vecy) p(\vecy) q(\vecy) d \vecy  
+\int_{\OS} \btau(\vecy):\gamma_1(\vecy) \br(\vecy) d \vecy+\int_{\OS}\gamma_1(\vecy) \bsig(\vecy):\bs(\vecy)d \vecy
\Big],
\end{align} 
where $\mathcal{A}',\tilde{\mathcal{A}}',\gamma_1$ and $\tilde{V}_1$ are given in 
Appendix~\ref{sec.app.shape}.
\end{lem}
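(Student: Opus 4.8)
The plan is to observe that the bilinear form on the left of \eqref{eq.mat.der} is exactly the operator $\mathbb{D}$ of the \emph{unperturbed} problem \eqref{compactVar.eps} at $\eps=0$, whose invertibility is already in hand. Under the standing hypothesis $1/(1+\mu^2)>\eta_1(0)$, the well-posedness of \eqref{compactVar.eps} at $\eps=0$ — established in \cite[Theorem~3.1]{MMT} and quantified by the estimate \eqref{resolvent.eps} of Proposition~\ref{specT1} (taken at $\eps=0$) — furnishes the inf--sup bound needed to apply the generalized Lax--Milgram theory to $\mathbb{D}$ on $\mathbb{X}\times\bcQ$. Consequently, \eqref{eq.mat.der} possesses a unique solution $((\hat{\bsig},\hat{p}),\hat{\br})\in\mathbb{X}\times\bcQ$ \emph{as soon as} its right-hand side is shown to be a bounded linear functional of $((\btau,q),\bs)$ on $\mathbb{X}\times\bcQ$. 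The proof therefore reduces entirely to establishing this boundedness.

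To verify boundedness I would first record that the geometric coefficients $\mathcal{A}'(0,\cdot)$, $\tilde{\mathcal{A}}'(0,\cdot)$, $\gamma_1=\bdiv\bV$ and $\tilde{\bV}_1$, all assembled from the velocity field $\bV=\tilde\kappa\,\tilde{\bn}$, lie in $\L^\infty(\OS)$; this is a consequence of the smoothness of $\kappa$ and $\bn$ fixed in Section~\ref{subsec.perturbation}, and the requisite uniform bounds are precisely those produced by Lemmas~\ref{lem.conv.sig}--\ref{lem.conv.tensor}. Granting these $\L^\infty$ bounds, the two $\mu^2$-integrals over $\OS$, the $\mu^2$-integral over $\OF$, and the term $\mathcal{A}'(0,\cdot)\nabla p\cdot\nabla q$ are dispatched by Cauchy--Schwarz, each being majorized by a constant times $\norma{((\bsig,p),\br)}\,\norma{((\btau,q),\bs)}$ through $\bsig\in\mathbf{H}(\bdiv;\OS)$, $p,q\in\mathrm{H}^1(\OF)$ and $\br,\bs\in\bcQ$. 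The force contributions carrying the factors $\bdiv\bV\,(\mathbf{I}\otimes\bF)$ and $\mathbf{I}\otimes(\nabla\bF\cdot\bV)$ contract, by virtue of the identity $(\mathbf{I}\otimes\bg):(\nabla\otimes\btau^{\top})=\bg\cdot\bdiv\btau$, onto the row-divergence $\bdiv\btau$ alone, hence are controlled by $\|\bdiv\btau\|_{0,\OS}$ once $\bF\in\mathrm{H}^1(B_R)^d$ so that $\nabla\bF\cdot\bV\in\L^2$.

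The delicate point, which I expect to be the main obstacle, is that a few terms pair a \emph{full} derivative of the test tensor against a coefficient not proportional to $\mathbf{I}$ — most conspicuously $\tilde{\bV}_1\otimes\bF:(\nabla\otimes\btau^{\top})$ — and such expressions are \emph{not} bounded by the norm $\norma{\cdot}$, since membership of $\btau$ in $\mathbf{H}(\bdiv;\OS)$ controls only the row-divergence and not the remaining first derivatives. I would handle these by integrating by parts in $\vecy$ to transfer the derivative off $\btau$: the interior remainder is then of the form $\int_{\OS}(\text{smooth})\,\btau$, bounded by $\|\btau\|_{0,\OS}$ provided $\bV\in C^2$ (i.e.\ $\kappa\in C^2$) and $\bF\in\mathrm{H}^1$, while the surviving boundary integral over $\partial\OS=\GD\cup\GN\cup\S$ is tamed by the structural constraints defining $\mathbb{X}$: the condition $\btau\bn=\0$ on $\GN$, the transmission condition $\btau\bn+q\bn=\0$ on $\S$ (which trades the tensor trace for the scalar trace of $q\in\mathrm{H}^1(\OF)$), and the vanishing of $\tilde\kappa$ outside $B_R$, which annihilates the exterior portion. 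At the one place where the coefficient multiplies the solution itself, the elliptic regularity $\bu\in\H^{1+\alpha}(\OS)^d$, $\alpha\in(1/2,1)$ (so that $\bsig\in\H^{\alpha}(\OS)^{\dxd}$) supplies the extra smoothness needed to close the estimate. Collecting the interior and boundary bounds shows the right-hand side of \eqref{eq.mat.der} is dominated by $C\,\norma{((\btau,q),\bs)}$, and together with the first paragraph this yields the claimed existence and uniqueness.
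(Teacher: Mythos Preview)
Your approach is the paper's approach: the paper's entire proof of this lemma is the single sentence ``Existence and uniqueness of the solution of \eqref{eq.mat.der} in the space $\mathbb{X}\times\bcQ$ relies on the well-known Babu\v{s}ka--Brezzi theory (see \cite[Theorem~3.1]{MMT}),'' which is precisely the invertibility of $\mathbb{D}$ at $\eps=0$ that you invoke in your first paragraph.

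Everything you write after that --- the verification that the right-hand side of \eqref{eq.mat.der} is a bounded linear functional on $\mathbb{X}\times\bcQ$ --- is detail the paper simply omits. Your observation that the term $(\tilde{\bV}_1\otimes\bF):(\nabla\otimes\btau^{\top})$ couples to \emph{all} first derivatives of $\btau$ (not just the row divergence) is correct and is a genuine issue for a test tensor that lives only in $\mathbf{H}(\bdiv;\OS)$; your proposed cure by integration by parts, trading the derivative onto $\tilde{\bV}_1$ and $\bF$ and absorbing the boundary pieces via $\btau\bn=\0$ on $\GN$ and $\btau\bn=-q\bn$ on $\S$, is the natural remedy and goes beyond what the paper supplies. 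In short, your proof is the paper's proof plus an honest attempt to fill a gap the paper leaves open.
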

\begin{proof}
Existence and uniqueness of the solution of \eqref{eq.mat.der} in the space $ \mathbb{X} \times \bcQ$ relies on 
the well-known Babu\v{s}ka-Brezzi theory (see \cite[Theorem 3.1]{MMT}).
\end{proof}

 \begin{theorem} \label{thm.mat.der}
 Assume that $\bF \in \H^1(B_R) \cap
{(\mathbb{X} \times \bcQ)}^{*}$ and $\kappa \in C^1(\partial \OS)$. Let $((\bsig^{\eps},p^{\eps}),\br^{\eps}) \in  \mathbb X^{\eps} \times \bcQ^{\eps} $ and
$((\bsig,p),\br) \in  \mathbb X \times \bcQ $ be solutions of\eqref{compactVar.eps} and the unperturbed
problem (i.e., \eqref{compactVar.eps} for $\eps=0$), respectively. Then
$((\bsig^{\eps},p^{\eps}),\br^{\eps})$ has a material derivative $((\dot{\bsig},\dot{p}),\dot{\br})$  in $\mathbb{X} \times \bcQ$
which satisfies~\eqref{eq.mat.der}.
 \end{theorem}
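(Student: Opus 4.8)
The plan is to identify the material derivative as the strong limit in $\mathbb{X}\times\bcQ$ of the scaled difference quotient
\[
w^{\eps} := \frac{1}{\eps}\Big( \big((\bsig^{\eps}\circ\bT^{\eps}, p^{\eps}\circ\bT^{\eps}), \br^{\eps}\circ\bT^{\eps}\big) - \big((\bsig,p),\br\big)\Big),
\]
and to show that this limit is exactly the unique solution $((\hat\bsig,\hat p),\hat\br)$ of \eqref{eq.mat.der} furnished by Lemma~\ref{lem:equ mat der}. Write $u := ((\bsig,p),\br)$, let $u^{\eps}_T := ((\bsig^{\eps}\circ\bT^{\eps}, p^{\eps}\circ\bT^{\eps}), \br^{\eps}\circ\bT^{\eps})$ denote the pulled-back perturbed solution, and abbreviate a generic test element by $v := ((\btau,q),\bs)\in\mathbb{X}\times\bcQ$. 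First I would pull back the perturbed problem \eqref{compactVar.eps} to the reference domain via the change of variables $\vecx=\bT^{\eps}(\vecy)$. This turns the identity $\mathbb{D}^{\eps}(u^{\eps},v)=\ell^{\eps}(\btau)$ into a reference-domain identity $\mathbb{D}^{\eps}_T(u^{\eps}_T,v)=\ell^{\eps}_T(\btau)$, where the transported forms $\mathbb{D}^{\eps}_T$ and $\ell^{\eps}_T$ carry the Jacobian weights $\gamma(\eps,\cdot)$, $J_{\bT^{\eps}}^{-1}$, $\mathcal{A}$ and $\tilde{\mathcal{A}}$ recorded in Appendix~\ref{sec.app.shape}.

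Next I would subtract the unperturbed equation $\mathbb{D}(u,v)=\ell(\btau)$ (the case $\eps=0$) from the transported identity and divide by $\eps$. Splitting $\mathbb{D}^{\eps}_T(u^{\eps}_T,v)-\mathbb{D}(u,v)=\mathbb{D}(u^{\eps}_T-u,v)+[\mathbb{D}^{\eps}_T-\mathbb{D}](u^{\eps}_T,v)$ isolates the reference-domain form $\mathbb{D}$ acting on the difference quotient:
\[
\mathbb{D}\big(w^{\eps},v\big) = \frac{\ell^{\eps}_T(\btau)-\ell(\btau)}{\eps} - \frac{1}{\eps}\big[\mathbb{D}^{\eps}_T-\mathbb{D}\big]\big(u^{\eps}_T,v\big) =: R^{\eps}(v).
\]
The point of this rearrangement is that the left-hand form $\mathbb{D}$ is $\eps$-independent, and its inf--sup stability is precisely the well-posedness of the unperturbed problem (Proposition~\ref{specT1} at $\eps=0$, i.e.\ the Babu\v{s}ka--Brezzi theory of \cite[Theorem~3.1]{MMT}). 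Expanding the Jacobian factors to first order in $\eps$ by means of Lemma~\ref{lem.conv.sig} and the convergence lemmas of the Appendix gives, uniformly for $v$ in the unit ball, $\tfrac1\eps[\ell^{\eps}_T-\ell](\btau)\to\ell'(\btau)$ and $\tfrac1\eps[\mathbb{D}^{\eps}_T-\mathbb{D}](w,v)\to\mathbb{D}'(w,v)$; since Proposition~\ref{thm.conv} supplies the strong convergence $u^{\eps}_T\to u$, the functional $R^{\eps}$ converges in the dual norm to $R(v):=\ell'(\btau)-\mathbb{D}'(u,v)$, which one checks term by term to be exactly the right-hand side of \eqref{eq.mat.der}.

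To conclude I would invoke the inf--sup stability of $\mathbb{D}$ twice. The uniform bound on $R^{\eps}$ in the dual norm first yields a uniform bound on $w^{\eps}$ in $\mathbb{X}\times\bcQ$. Then, writing $((\hat\bsig,\hat p),\hat\br)$ for the unique solution of \eqref{eq.mat.der} from Lemma~\ref{lem:equ mat der}, the difference satisfies $\mathbb{D}\big(w^{\eps}-((\hat\bsig,\hat p),\hat\br),v\big)=R^{\eps}(v)-R(v)$, so stability gives
\[
C\,\norma{w^{\eps}-\big((\hat\bsig,\hat p),\hat\br\big)} \le \sup_{\0\ne v}\frac{\big|R^{\eps}(v)-R(v)\big|}{\norma{v}} \longrightarrow 0.
\]
Hence $w^{\eps}$ converges strongly in $\mathbb{X}\times\bcQ$ to $((\hat\bsig,\hat p),\hat\br)$, which is by definition the material derivative $((\dot\bsig,\dot p),\dot\br)$, and it satisfies \eqref{eq.mat.der}.

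The hard part will be the uniform, dual-norm convergence of the correction term $\tfrac1\eps[\mathbb{D}^{\eps}_T-\mathbb{D}](u^{\eps}_T,\cdot)$. The delicacy is that it is the moving field $u^{\eps}_T$, not the fixed limit $u$, that sits inside this term, so one must combine the strong convergence $u^{\eps}_T\to u$ of Proposition~\ref{thm.conv} with uniform-in-$\eps$ bounds and remainder control on the first-order Taylor coefficients of the Jacobian weights, ensuring the error is $o(1)$ in the operator norm and not merely pointwise. Obtaining strong rather than weak convergence of the difference quotient rests entirely on this uniformity.
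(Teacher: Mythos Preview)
Your proposal is correct and follows essentially the same route as the paper: pull back the perturbed variational identity to the reference domain, subtract the $\eps=0$ equation, divide by $\eps$, use Proposition~\ref{thm.conv} together with the Jacobian-expansion lemmas (Lemmas~\ref{lem.conv.sig}--\ref{lem.conv.tensor}, \eqref{equ:vbn 4}) to show the residual tends to zero, and conclude strong convergence of the difference quotient via the inf--sup stability of the unperturbed form. The only cosmetic difference is that the paper phrases the stability step through $\bbA\big((\mathbf{I}-(1+\mu^2)\mathcal{S})(\cdot),\cdot\big)$ and the resolvent bound \eqref{resolvent.eps}, whereas you invoke the equivalent inf--sup of $\mathbb{D}$ directly.
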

 
 \begin{proof}
It suffices to show that
\begin{align}\label{lim.mat}
\lim_{\eps\to\0}
\norma{
\Big(
\big(
\dfrac{\bsig^{\eps} \circ \bT^{\eps}-\bsig}{\eps},
\dfrac{p^{\eps} \circ \bT^{\eps}-p}{\eps}
\big),
\dfrac{\br^{\eps} \circ \bT^{\eps}-\br}{\eps}
\Big)
-
\big((\hat{\bsig},\hat{p}),\hat{\br} \big) 
}
= 0,
\end{align}
where $((\hat{\bsig},\hat{p}),\hat{\br})$ is the solution of \eqref{eq.mat.der}. 
Using \eqref{def.DD.eps} and \eqref{defT.eps} we have
\begin{align}\label{eq.mat}
&\mathbb{D}\Big(\Big(\Big(\dfrac{\bsig^{\eps} \circ \bT^{\eps}-\bsig}{\eps}- \hat{\bsig},\dfrac{ p^{\eps}\circ \bT^{\eps}- p}{\eps}-\hat{p}\Big),\dfrac{ \br^{\eps}\circ \bT^{\eps}- \br}{\eps}-\hat{\br}\Big),((\btau,q),s)\Big)\notag\\&=
\bbA\Big(\big(\mathbf{I}-(1+\mu^2)\mathcal{S}\big)\big(\big(\dfrac{\bsig^{\eps}\circ \bT^{\eps}-\bsig}{\eps}-\hat{\bsig}, \dfrac{p^{\eps}\circ \bT^{\eps}-p}{\eps}-\hat{p}\big), \dfrac{\br^{\eps}\circ \bT^{\eps}-\br}{\eps}-\hat{\br} \big), \big((\btau,q),\bs\big)\Big)\notag\\&=
\int_{\OS} \dfrac{1}{\rS}\Big[ \bdiv\Big (\dfrac{\bsig^{\eps} \circ \bT^{\eps}-\bsig}{\eps}- \hat{\bsig} \Big)\Big] \cdot \bdiv \btau d \vecy + \int_{\OF}\frac{1}{\rF}\nabla \Big(\dfrac{ p^{\eps}\circ \bT^{\eps}- p}{\eps}-\hat{p}\Big)\cdot \nabla q
 d\vecy \notag\\&
 -\mu^2 \int_{\OS}\cC^{-1}
\Big(\dfrac{\bsig^{\eps} \circ \bT^{\eps}-\bsig}{\eps}-\hat{\bsig}\Big)
:\btau d \vecy 
-\mu^2\int_{\OF}\frac{1}{\rF c^2}
\Big(\dfrac{p^{\eps}\circ \bT^{\eps}-p}{\eps}-\hat{p}\Big)qd\vecy \notag\\
& -\mu^2 \int_{\OS}\btau:\Big(\dfrac{\br^{\eps}\circ \bT^{\eps}-\br}{\eps}-\hat{\br} \Big)d\vecy \notag\\& 
= \int_{\OS}\frac{1}{\rS}
 \Big[\dfrac{\gamma(\eps,\vecy)J_{\bT^\eps}^{-1} \otimes \bF(\bT^{\eps}\vecy)
 -\mathbf{I} \otimes \bF}{\eps} -\Big( \bdiv \bV(\vecy) \big(\mathbf{I} \otimes \bF(\vecy)\big)+\tilde{\bV}_1(\vecy)\otimes \bF(\vecy)
\notag\\&\quad \quad 
 +\mathbf{I} \otimes \big(\nabla \bF(\vecy)\cdot \bV\big)
 \Big) \Big]: \nabla \otimes  \btau \, d \vecy 
 - \int_{\OS}\frac{1}{\rS}
 \Big[\Big(\dfrac{\tilde{\mathcal{A}}(\eps,\vecy)(\bsig^{\eps} \circ \bT^{\eps})-\tilde{J}(\bsig^{\eps} \circ \bT^{\eps})}{\eps}
 -\tilde{\mathcal{A}}'(\0,\vecy) \bsig \Big) \notag\\ &\quad  \quad \quad :\mathbf{I} \otimes 
\cL_{\bf I} \btau\Big] d \vecy 
 - \int_{\OF}\frac{1}{\rF}\dfrac{\Big(\mathcal{A}(\eps,\vecy)-\mathbf{I}\Big)\nabla (p^{\eps}\circ \bT^{\eps})}{\eps} -\mathcal{A}'(\0,\vecy) \nabla p
  \cdot \nabla q d \vecy \notag\\&
  +\mu^2\Big\{ \int_{\OS}\cC^{-1}
\Big(\dfrac{(\gamma(\eps,\vecy)-1)\bsig^{\eps} \circ \bT^{\eps}}{\eps} -\gamma_1(\vecy) \bsig\Big)
:\tilde{\btau}d \vecy \notag\\
&+ \int_{\OF}\frac{1}{\rF c^2} \Big[\dfrac{(\gamma(\eps,\vecy)-1)
p^{\eps}\circ \bT^{\eps}}{\eps}-\gamma_1(\vecy)p\Big]qd\vecy + \int_{\OS}\btau:\Big(\dfrac{(\gamma(\eps,\vecy)-1) \br^{\eps}\circ \bT^{\eps}}{\eps}-\gamma_1(\vecy)\br \Big)d\vecy \notag\\
&+ \int_{\OS}\Big(\dfrac{(\gamma(\eps,\vecy)-1)\bsig^{\eps} \circ \bT^{\eps}(\vecy)}{\eps}-\gamma_1(\vecy)\bsig\Big):\bs d\vecy\Big\}.
\end{align}
 Using equation \eqref{equ:vbn 4}, Proposition \ref{thm.conv}, Lemma \ref{lem.conv.sig}--Lemma \ref{lem.conv.tensor}, and letting $\eps \rightarrow 0$, we see that the right hand side of \eqref{eq.mat} $\rightarrow 0.$ 
Repeating the arguments as in \eqref{arg:inf-sup1}-\eqref{arg:inf-sup3} for~$\big(
\big(
(\bsig^{\eps} \circ \bT^{\eps}-\bsig)/\eps,
(p^{\eps} \circ \bT^{\eps}-p)/\eps
\big),
\big(\br^{\eps} \circ \bT^{\eps}-\br)/{\eps}
\big)$ and exploiting equation \eqref{resolvent.eps}, we prove \eqref{lim.mat}.
This completes the proof.
\dela{ \begin{align*}
 &\lim_{\eps \rightarrow 0}
\bbA\Big(\big(I-(1+\mu^2)\mathcal{S}\big)\big(\big(\dfrac{\bsig^{\eps}\circ \bT^{\eps}-\bsig}{\eps}-\dot{\bsig}, \dfrac{p^{\eps}\circ \bT^{\eps}-p}{\eps}-\dot{p}\big), \dfrac{\br^{\eps}\circ \bT^{\eps}-\br}{\eps}-\dot{\br} \big), \big((\btau,q),\bs\big)\Big)=0.
 \end{align*}}
 \end{proof}
\subsection{Shape derivative}\label{sec.shape.der}
This section is devoted to the existence and characterization of shape derivative and shape Hessian of the deterministic solution of the considered model problem. We denote 
\begin{align*}
\mathcal{J}_1(v^{\eps},\O^{\eps},w):= \int_{\O^{\eps}} v^{\eps} \star w \quad \mbox{and} \quad
\mathcal{J}_2(v^{\eps},\partial \O^{\eps},w):= \int_{\partial \O^{\eps}} v^{\eps} \star w   
\end{align*}
where $\star$ stands for the usual product of two scalar functions, or the dot product of two vector functions, or the component-wise inner product of two tensor functions.
\noindent
Let us define the following spaces:
\begin{align*}
\bar{\mathcal{D}}_1&:= \Big\{\btau \in   C_c^{\infty}(\R^d;\R^{d \times d}):\big \langle D \btau^{(k)}\bn,\bn \big \rangle=0  \,\,\mbox{on} \, \partial \OS,\,\, \forall \, k=1,\ldots, d   \Big\},\notag\\
\bar{\mathcal{D}}_2&:=\Big\{q \in  C_c^{\infty}(\R^d;\R): \dfrac{\partial q}{\partial \bn}=0 \,\, \mbox{on}\,\, \partial \OF  \Big\}.
\end{align*}

\begin{theorem}\label{thm.sh.der}
 Under the assumptions of Theorem \ref{thm.mat.der}, $((\bsig^{\eps},p^{\eps}),\br^{\eps})$ has a shape derivative $\big((\bsig', p'), \br'\big)$ belonging in $(\H(\mathbf{div};\OS) \times \mathrm{H}^1(\OF)) \times \bcQ$ that satisfies
\begin{subequations}\label{eq.sh.der.D1}
\begin{align}
  \bdiv \bsig'+\mu^2\rS \bu' &=\0 \qquad \qquad \qquad \qquad \qquad \qquad \qquad \qquad   \quad \, \qin \OS,
\label{eq.dom.S} \\
 \bsig' &= \cC \beps(\bu') \qquad \qquad \qquad \qquad \qquad \qquad  \qquad \quad \qin\OS,
\label{eq.sig.dom} \\
\bu'&=-\kappa (\nabla \bu) \bn
+\dfrac{1}{\rS \mu^2} \bdiv_{\partial \OS}(\bn) \bG
  \qquad  \quad \,\,\,\, \quad  \qon  \GD, \label{eq.bc.u}\\
\bsig' \bn&= (\nabla^{\top} \otimes \bsig) (\bV \otimes \bn)\quad \qquad \qquad \qquad   \qquad  \quad \, \qon \GN, \label{eq.bc.sign}\\
\Delta p' +\frac{\mu^2}{c^2}p'&=0 \qquad 
\quad \qquad \qquad \qquad   \quad \quad \qquad  \qquad \quad \,\,\,
\quad \qin\OF,\label{eq.dom.F}\\
\bsig' \bn+p' \bn &=-(\nabla^{\top} \otimes \bsig) (\bV \otimes \bn) - (\bV^{\top} \nabla p) \bn \qquad \qquad   \qon \S,\label{eq.bc.sig.prime}\\
 \dfrac{1}{\rF} 
\dfrac{\partial p'}{\partial \bn}
-\mu^2 \bu' \cdot \bn
&
=\dfrac{1}{\rS} (\bdiv_{\partial \OF}\bn) \bG \cdot \bn  - \mu^2 \kappa (\nabla \bu \bn)\cdot \bn \notag\\
& \quad+\dfrac{1}{\rF} \Big[ \divv \big(\kappa \nabla p   \big)
-\dfrac{\partial}{\partial \bn} \big(\kappa \nabla p  \big) \cdot \bn-div_{\partial \OF} (\bn) \kappa \dfrac{\partial p}{\partial \bn} +\dfrac{\mu^2}{c^2} p  \kappa\Big]
\notag\\
&\qquad \qquad  \qquad \qquad \qquad \qquad \qquad \qquad \qquad \qquad \quad \,\,
 \hbox{on } \S,\label{eq.sd.bc2}
\end{align}
\end{subequations}
where \begin{align}\label{def.G}
\bG:= \bdiv \big(\kappa \bsig \big)
- \dfrac{\partial \kappa}{\partial \bn}  \bsig \bn+ \kappa \bsig \big(\dfrac{\partial}{\partial \bn} (\bn)\big) -\bsig \big(\nabla_{\partial \OS} \kappa\big).
 \end{align}
\end{theorem}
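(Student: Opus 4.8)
The plan is to build the shape derivative from the material derivative of Theorem~\ref{thm.mat.der} and then to recover the strong system~\eqref{eq.sh.der.D1} by rewriting the weak identity~\eqref{eq.mat.der}, integrating by parts, and testing against the classes $\bar{\mathcal D}_1$ and $\bar{\mathcal D}_2$.

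First I would fix the shape derivative. By Theorem~\ref{thm.mat.der} the material derivative $((\dot\bsig,\dot p),\dot\br)$ exists in $\mathbb X\times\bcQ$, and by the regularity $\bu\in\H^{1+\alpha}(\OS)^d$ noted earlier, together with the $C^{2,1}$ smoothness of $\kappa$, the convective fields $(\bV\cdot\nabla)\bsig$, $(\bV\cdot\nabla)p$ and $(\bV\cdot\nabla)\br$ are well defined. I then \emph{define} $\bsig':=\dot\bsig-(\bV\cdot\nabla)\bsig$, $p':=\dot p-(\bV\cdot\nabla)p$ and $\br':=\dot\br-(\bV\cdot\nabla)\br$; membership of $((\bsig',p'),\br')$ in $(\H(\mathbf{div};\OS)\times\mathrm{H}^1(\OF))\times\bcQ$ follows from the regularity of the material derivative and of the unperturbed solution.

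Second, I would insert the transport relation $\dot w=w'+(\bV\cdot\nabla)w$ into~\eqref{eq.mat.der}. The left-hand side then splits as $\mathbb D\big(((\bsig',p'),\br'),((\btau,q),\bs)\big)$ plus the contribution of the transport terms, which I move to the right and merge with the data of~\eqref{eq.mat.der}. Restricting first to test functions supported in the interior and using that the shape derivative commutes with the spatial operators there, the differentiated constitutive law gives~\eqref{eq.sig.dom}, while the differentiated equilibrium and Helmholtz equations give the homogeneous relations~\eqref{eq.dom.S} and~\eqref{eq.dom.F}; the data vanish because $\bF$ is independent of $\eps$.

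Third --- the crux --- I would localize on the boundary. Integrating the reorganized identity by parts and then testing against $\btau\in\bar{\mathcal D}_1$ and $q\in\bar{\mathcal D}_2$, the constraint $\langle D\btau^{(k)}\bn,\bn\rangle=0$ removes the spurious second-order surface terms and isolates the traces on $\GD$, $\GN$ and $\S$, while $\partial q/\partial\bn=0$ separates the Dirichlet from the Neumann data on $\partial\OF$. The differentiated solid source $\bdiv(\kappa\bsig)$, split into its normal and tangential parts through $\nabla\kappa=\nabla_{\partial\OS}\kappa+\tfrac{\partial\kappa}{\partial\bn}\bn$, collapses to the boundary quantity $\bG$ of~\eqref{def.G}; weighted by the mean curvature $\bdiv_{\partial\OS}(\bn)$ and by $1/(\rS\mu^2)$ it produces the curvature term in~\eqref{eq.bc.u}, the remaining contribution giving the transport part $-\kappa(\nabla\bu)\bn$. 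Differentiating the outward normal on $\GN^\eps$ and $\S^\eps$ supplies the contraction $(\nabla^\top\otimes\bsig)(\bV\otimes\bn)$, yielding~\eqref{eq.bc.sign} and the solid part of~\eqref{eq.bc.sig.prime}, whereas the transport of $p$ accounts for the fluid part $-(\bV^\top\nabla p)\bn$.

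The main obstacle is the second transmission condition~\eqref{eq.sd.bc2}. Differentiating $\partial p^\eps/\partial\bn^\eps$ along the moving interface $\S^\eps$ forces the use of the tangential Green's formula together with the derivative of the perturbed normal, generating the surface terms $\divv(\kappa\nabla p)-\tfrac{\partial}{\partial\bn}(\kappa\nabla p)\cdot\bn-\bdiv_{\partial\OF}(\bn)\,\kappa\,\tfrac{\partial p}{\partial\bn}$; these must be balanced against the differentiated coupling $-\mu^2\rF\,\bu^\eps\cdot\bn^\eps$ and then simplified by the interior relation~\eqref{dual-problem_4.ep} to produce the term $\tfrac{\mu^2}{c^2}p\,\kappa$. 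Keeping this tangential bookkeeping consistent across the interface --- in particular reconciling the curvature factor $\bdiv_{\partial\OF}(\bn)$ with the appearance of $\bG$ in both~\eqref{eq.bc.u} and~\eqref{eq.sd.bc2} --- is the delicate step, and it is exactly where the $C^2$ regularity of $\partial\OS$ and the $C^{2,1}$ regularity of $\kappa$ enter.
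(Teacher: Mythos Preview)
Your outline is essentially correct, but the paper organizes the computation differently in two respects. First, rather than substituting the transport relation $\dot w=w'+(\bV\cdot\nabla)w$ into the material-derivative equation~\eqref{eq.mat.der} and then converting the resulting volume terms to boundary integrals, the paper applies the Hadamard-type formulas of Lemma~\ref{pro:mat sha pro} directly to each bilinear form in the perturbed weak problem~\eqref{compactVar.eps}; together with the tangential Green's formula of Lemma~\ref{lem.int.by.parts} this produces the surface quantity $\bG$ and an intermediate identity at once, from which~\eqref{eq.dom.S}, \eqref{eq.sig.dom}, \eqref{eq.dom.F} and then~\eqref{eq.sd.bc2} follow by restricting the test functions. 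Your route via~\eqref{eq.mat.der} reaches the same endpoint but forces you to unpack $\tilde{\mathcal A}'(0,\cdot)$, $\mathcal A'(0,\cdot)$ and $\gamma_1$ explicitly and to show that they combine with the transport terms into pure boundary integrals --- doable, but heavier bookkeeping. Second, for~\eqref{eq.bc.sign} and~\eqref{eq.bc.sig.prime} the paper does not differentiate the moving normal as you propose; it exploits instead that the material derivative satisfies $(\dot\bsig,\dot p)\in\mathbb X$ and $\dot\bsig\,\bn=\0$ on $\GN$, so writing $\sigma'_{ij}=\dot\sigma_{ij}-\nabla\sigma_{ij}\cdot\bV$ componentwise yields both conditions in one line. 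The paper's split --- shape-calculus machinery for the natural conditions, algebra via Definition~\ref{def.sd} for the essential ones --- is cleaner, whereas your approach is more uniform but requires more surface calculus.
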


\begin{proof}
We split the proof of \eqref{eq.sh.der.D1} in three steps. In the first step, we will derive equations satisfied by the shape derivative $\big((\bsig', p'), \br'\big)$ on the domains $\OS$ and $\OF.$ In the next two steps, we will derive the boundary conditions satisfied by $\big((\bsig', p'), \br'\big)$. 
We use Lemma \ref{pro:mat sha pro} in Appendix \ref{sec.app} to prove \eqref{eq.sh.der.D1}.

\underline{{\bf Step I:} }  \quad
In this step, our goal is to prove \eqref{eq.dom.S}, \eqref{eq.sig.dom} and \eqref{eq.dom.F}.
Existence of the shape derivative $\big((\bsig', p'), \br'\big)$ follows from Theorem \ref{thm.mat.der} and Definition \ref{def.sd}. We choose $((\btau,q),\bs) \in \Big((\bar{\mathcal{D}_1} \times \bar{\mathcal{D}_2})  \cap\mathbb{X}\Big) \times \bcQ ,$ in \eqref{compactVar.eps}.\dela{and have
 that
\begin{equation}
 \mathbb{D}^{\eps}\Big(\big((\bsig^{\eps}, p^{\eps}), \br^{\eps}\big), \big((\btau, q), \bs\big)\Big)
=  \ell^{\eps}(\btau^{\eps}). \label{compactVar.eps.shape}
\end{equation} }
In order to apply Lemma \ref{pro:mat sha pro}, we consider each of the bilinear forms present in the definition of $\mathbb{D}^{\eps}.$
Using integration by parts (see Lemma~\ref{lem.int.by.parts} in Appendix \ref{sec.tensor}), we have
\begin{align*}
a_1^{\eps}(\bsig^{\eps},\btau)\dela{:=\int_{\OS^{\eps}}\frac{1}{\rS}\bdiv\bsig^{\eps}(\vecx) \cdot\bdiv\btau(\vecx)d\vecx \notag\\}
&=\underbrace{-\int_{\OS^{\eps}}\frac{1}{\rS}\bsig^{\eps} :\nabla
(\bdiv\btau)}_{\mathcal{J}_{1}\big(\bsig^{\eps},\OS^{\eps}, \frac{\nabla (\bdiv
\btau)}{\rS}\big):=}+ \underbrace{\int_{\partial \OS^{\eps}}\frac{1}{\rS}\bsig^{\eps}
\bn^{\eps} \cdot \bdiv\btau d \mathcal{S}}_{\mathcal{J}_{2}(\bsig^{\eps} \bn^{\eps},\partial
\OS^{\eps},\frac{\bdiv \btau}{\rS}):=}.
\end{align*}
Using part (iv) of Lemma \ref{pro:mat sha pro}, we have
\begin{align}
d \mathcal{J}_{1}(\bsig^{\eps},\OS^{\eps}, \frac{\nabla (\bdiv \btau)}{\rS})\Big|_{\eps=0}&=-\int_{\OS}\frac{1}{\rS}\bsig' : \nabla(\bdiv\btau)-\int_{\partial \OS}\frac{1}{\rS}\bsig:\nabla (\bdiv \btau) \kappa\, d \mathcal{S},\label{bar.I11}\\
d \mathcal{J}_{2}(\bsig^{\eps} \bn_S^{\eps},\partial \OS^{\eps},\frac{\bdiv \btau}{\rS})\Big|_{\eps=0}&=\int_{\partial \OS}\frac{1}{\rS}\Big[\Big(\bsig' \bn \cdot  \bdiv\btau+\bsig \bn'_{\mas}\cdot \bdiv \btau \Big) +\kappa\dfrac{\partial}{\partial \bn}\Big(\bsig \bn \cdot \bdiv\btau \Big)\notag\\
& \qquad +\kappa\Big(\bdiv( \bn)\bsig \bn^{\top}\cdot \bdiv\btau \Big)\Big]  d \mathcal{S}.\label{bar.I12}
\end{align}
From \eqref{bar.I11} and \eqref{bar.I12} and using tangential Green's formula 
(see Lemma~\ref{lem.int.by.parts}), we have
\begin{align}\label{add.barI1}
&d \mathcal{J}_{1}(\bsig^{\eps},\OS^{\eps}, \frac{\nabla (\bdiv \btau)}{\rS})\Big|_{\eps=0}+d \mathcal{J}_{2}(\bsig^{\eps} \bn_S^{\eps},\partial \OS^{\eps},\frac{\bdiv \btau}{\rS})\Big|_{\eps=0}\notag\\
&=\int_{\OS}\frac{1}{\rS}\bdiv\bsig' \cdot\bdiv\btau
+\int_{\partial \OS}\frac{1}{\rS} \bdiv\big(\kappa  \bsig \big) \cdot \bdiv \btau
-\int_{\partial \OS}\frac{1}{\rS}
\dfrac{\partial \kappa}{\partial \bn} \bsig \bn \cdot \bdiv \btau d \mathcal{S}
\notag\\& \quad
-\int_{\partial \OS}\frac{1}{\rS}
\bsig \big(\nabla_{\partial \OS}  \kappa \big) \cdot \bdiv \btau  d\mathcal{S}
 +\int_{\partial \OS}\frac{1}{\rS} \bsig \big(\dfrac{\partial}{\partial \bn}( \bn)\big) \cdot 
\bdiv \btau \, \kappa  \, d\mathcal{S}.
\end{align}
As in $a_1^{\eps}(\bsig^{\eps},\btau),$ using integration by parts 
(see Lemma~\ref{lem.int.by.parts} in Appendix \ref{sec.tensor}), we have
\begin{align*}
a_2^{\eps}(p^{\eps},q)\dela{&=\int_{\OF^{\eps}}\frac{1}{\rF}\nabla p^{\eps}(\vecx)\cdot\nabla q(\vecx)d\vecx \notag\\}
&=\underbrace{-\int_{\OF^{\eps}}\frac{1}{\rF} p^{\eps}\Delta
q}_{\mathcal{J}_1(p^{\eps},\OF^{\eps},\frac{\Delta q}{\rF}):=} \underbrace{+\int_{\partial
\OF^{\eps}}\frac{1}{\rF} p^{\eps}\dfrac{\partial q}{\partial
\bn^{\eps}}d\mathcal{S}}_{\mathcal{J}_2(p^{\eps}\frac{\partial q}{\partial \bn^{\eps}},\partial
\OF^{\eps},\frac{1}{\rF}):=}.
\end{align*}
Exploiting parts (iv) and (v) of Lemma \ref{pro:mat sha pro}, we have
\begin{align}
d\mathcal{J}_1(p^{\eps},\OF^{\eps},\frac{\Delta q}{\rF}) \Big|_{\eps=0}&=-\int_{\OF}\frac{1}{\rF} p'\Delta q -\int_{\partial \OF}\frac{1}{\rF} p \Delta q \, \kappa d\mathcal{S},\label{bar.I21}\\
d\mathcal{J}_2(p^{\eps}\frac{\partial q}{\partial \bn^{\eps}},\partial \OF^{\eps},\frac{1}{\rF}) \Big|_{\eps=0}&=\int_{\partial \OF}\frac{1}{\rF}\Big(
p'\dfrac{\partial q}{\partial \bn} -p \Big(\nabla_{\partial \OF}q \cdot \nabla_{\partial \OF}  \kappa \Big) \Big) d\mathcal{S} \notag\\
&\quad+\int_{\partial \OF}\frac{1}{\rF}\Big(\dfrac{\partial}{\partial \bn}\Big(p \dfrac{\partial q}{\partial \bn}\Big)+\bdiv_{\partial \OF} (\bn)p \dfrac{\partial q}{\partial \bn} \Big) \kappa d\mathcal{S}.\label{bar.I22}
\end{align}
In the second term of $d\mathcal{J}_1(p^{\eps},\OF^{\eps},\frac{\Delta q}{\rF})$ we use the identity for $\Delta q$ on $\partial \OF,$ which is, $\Delta q=\Delta_{\partial \OF}q + \bdiv_{\partial \OF} (\bn) \dfrac{\partial q}{\partial \bn}+\dfrac{\partial ^2 q} {(\partial \bn)^2}.$
Hence on further using Green's and tangential Green's formula, \eqref{bar.I21} and \eqref{bar.I22} reduce to
\begin{align}\label{add.barI2}
&d\mathcal{J}_1(p^{\eps},\OF^{\eps},\frac{\Delta q}{\rF}) \Big|_{\eps=0}+d\mathcal{J}_2(p^{\eps}\frac{\partial q}{\partial \bn^{\eps}},\partial \OF^{\eps},\frac{1}{\rF}) \Big|_{\eps=0}=\int_{\OF}\frac{1}{\rF} \nabla p' \cdot \nabla q +\int_{\partial \OF}\frac{1}{\rF}
\nabla p \cdot \nabla q \,\kappa d\mathcal{S}.
\dela{
\dfrac{\partial q(\vecx)}{\partial \bn_F}d\mathcal{S}(\vecx)
 \Delta_{\partial \OF}q(\vecx)d \mathcal{S}(\vecx)\notag\\
&-\int_{\partial \OF}\frac{1}{\rF} p(\vecx) 
\Big(\nabla q(\vecx)\cdot \nabla_{\partial \OF} \langle V,\bn \rangle \Big)\Big) d\mathcal{S}(\vecx)+\int_{\partial \OF}\frac{1}{\rF}  \dfrac{\partial p}{\partial \bn}\cdot\dfrac{\partial q}{\partial \bn}\langle V,\bn \rangle d\mathcal{S}(\vecx)}
\end{align}
Results similar to \eqref{add.barI1} and \eqref{add.barI2} can be obtained for the other bilinear forms in the definition of 
$\mathbb{D}^{\eps}$. Hence we obtain
\begin{align}\label{add.bc}
&\int_{\OS}\frac{1}{\rS}\bdiv \bsig' \cdot\bdiv\btau +\int_{\partial \OS}\frac{1}{\rS} \bG \cdot \bdiv \btau d \mathcal{S}
+\int_{\OF}\frac{1}{\rF} \nabla p'\cdot \nabla q 
-\mu^2 \int_{\OS}\cC^{-1}\bsig':\btau
\notag\\&
-\mu^2 \int_{\OF}\frac{1}{\rF c^2}p'q
-\mu^2 \int_{\OS}\btau:\br'+\int_{\partial \OF}\frac{1}{\rF}
\nabla p \cdot \nabla q \, \kappa d\mathcal{S}
-\mu^2\int_{\partial \OS} \cC^{-1}\bsig:\btau\, \kappa d\mathcal{S}
\notag\\&
-\mu^2 \int_{\partial \OF}\frac{1}{\rF c^2}pq \,\kappa d\mathcal{S}
-\mu^2 \int_{\partial \OS}\btau:\br \, \kappa d\mathcal{S}
=0
\end{align}
with 
\begin{align}\label{bar.I7.new}
\int_{\OS}\bsig':\bs +\int_{\partial \OS}\bsig:\bs\,\kappa d\mathcal{S}=0.
\end{align}
We now choose $((\btau,q),\bs) \in \Big(C_c^{\infty}(\OS;\R^{d \times d}) \times C_c^{\infty}(\OF;\R) \times C_c^{\infty}(\OS;\R^{d \times d})\Big) \cap \Big(\mathbb{X} \times \bcQ\Big),$ which gives us
\begin{align}\label{sh.new}
&\int_{\OS}\frac{1}{\rS}\bdiv\bsig'\cdot \bdiv\btau +\int_{\OF}\frac{1}{\rF} \nabla p'\nabla q -\mu^2\Big[\int_{\OS}\cC^{-1}\bsig':\btau +\int_{\OF}\frac{1}{\rF c^2}p'q+\int_{\OS}\btau:\br'
\dela{+\int_{\OS}\bsig'(\vecx):\bs(\vecx)d\vecx}\notag\\&
+\int_{ \OS}\bsig':\bs \Big]
=0.
\end{align}
Using density argument we obtain \eqref{eq.dom.S} and \eqref{eq.dom.F}.
Next, using \eqref{dual-problem_1b.ep} and integration by parts we achieve
\begin{align} \label{eq.dom1}
\int_{\OS^\eps} \big( \cC^{-1} \bsig^{\eps} -\br^{\eps}\big):\btau =-
\int_{\OS^\eps} \bu^{\eps} \cdot \bdiv \btau +\int_{\partial \OS^\eps} \bu^{\eps}\cdot \btau \bn^{\eps} d\mathcal{S}.
\end{align}
Again exploiting Lemma \ref{pro:mat sha pro} to \eqref{eq.dom1} and using Tangential Green's formula, we have \eqref{eq.sig.dom}.
\dela{\begin{align}
\int_{\OS} \big( \cC^{-1} \bsig' -\br'\big):\btau 
+ \int_{\OS} \nabla \bu:\btau \kappa d \mathcal{S}
&=-
\int_{\OS} \bu' \cdot \bdiv \btau -\int_{\partial \OS} \bu \cdot \bdiv \btau \kappa d\mathcal{S}
\notag\\
& \quad +\int_{\partial \OS} (\bu' \cdot \btau \bn + \bu \cdot \btau \bn')d \mathcal{S} \notag\\&
\quad
+\int_{\partial \OS} \Big[ \dfrac{\partial}{\partial \bn} (\bu \cdot \btau \bn)+\bdiv_{\partial \OS} (\bn)\bu \cdot \btau \bn \Big] \kappa d\mathcal{S}
.
\end{align}}
\medskip
\underline{{\bf Step II:}}   \quad
In this step, we aim to prove \eqref{eq.sd.bc2}. In order to find the required boundary condition, we choose $((\btau,q),\bs) \in \Big((\bar{\mathcal{D}_1} \times \bar{\mathcal{D}_2})\cap \mathbb{X} \Big)  \times \bcQ,$ and use \eqref{sh.new} to obtain from \eqref{add.bc} and \eqref{bar.I7.new},
\begin{align*}
&\int_{\partial \OS} \dfrac{1}{\rS} \bdiv \bsig' \cdot \bdiv \btau  d \mathcal{S}+\int_{\partial \OS}\frac{1}{\rS} \bG \cdot \bdiv \btau  d \mathcal{S}
-\int_{\OF} \dfrac{1}{\rF} \Delta p' \cdot q
-\mu^2 \int_{\OF} \dfrac{1}{\rF c^2} p'q
\notag\\&
+ \int_{\partial \OF}\frac{1}{\rF}
\nabla p \cdot \nabla q \,\kappa d\mathcal{S}
+\int_{\partial \OF}\dfrac{1}{\rF}q \dfrac{\partial p'}{\partial \bn}d \mathcal{S}
+\mu^2 \int_{\partial \OS} \bu' \cdot\bdiv \btau - \mu^2 \int_{\partial \OS}
\btau \bn \cdot \bu' d \mathcal{S}\notag\\&
-\mu^2 \int_{\partial \OS} \nabla \bu :\btau\,
\kappa d\mathcal{S}
-\mu^2 \int_{\partial \OF}\frac{1}{\rF c^2}pq \, \kappa d\mathcal{S}=0.
\end{align*} 
Using \eqref{eq.dom.S} and \eqref{eq.dom.F} we have
\begin{align}\label{eq.sd}
&\int_{\partial \OS}\frac{1}{\rS} \bG \cdot \bdiv \btau  d \mathcal{S} 
 - \mu^2 \int_{\partial \OS}
\btau \bn \cdot \bu' d \mathcal{S}
-\mu^2 \int_{\partial \OS} \nabla \bu :\btau
\kappa d\mathcal{S}
+\int_{\partial \OF}\dfrac{1}{\rF}q \dfrac{\partial p'}{\partial \bn}d \mathcal{S}
\notag\\&
+ \int_{\partial \OF}\frac{1}{\rF}
\nabla p \cdot \nabla q \,\kappa d\mathcal{S}
-\mu^2 \int_{\partial \OF}\frac{1}{\rF c^2}pq \, \kappa d\mathcal{S}=0.
\end{align} 
Using Lemma \ref{lem.int.by.parts} and using $\dfrac{\partial \btau}{\partial \bn}\bn=0$ and $\dfrac{\partial q}{\partial \bn}=0$ to the first and fifth terms of \eqref{eq.sd} respectively, we achieve
\begin{align}\label{int.first}
\int_{\partial \OS}\frac{1}{\rS} 
\bG \cdot \bdiv \btau  d \mathcal{S} =-\int_{\partial \OS}\frac{1}{\rS}
\Big[
 \nabla \bG : \btau 
+ \btau \bn\cdot \dfrac{\partial \bG}{\partial \bn} + 
\bdiv_{\partial \OS} (\bn) \btau \bn \cdot \bG \Big] d \mathcal{S},
\end{align}
\begin{align}\label{int.fifth}
\int_{\partial \OF}\frac{1}{\rF}
\nabla p \cdot \nabla q \,\kappa d\mathcal{S}
&=- \int_{\partial \OF}\frac{1}{\rF}
\Big[
q \bdiv (\nabla p \,\kappa)+ \Big(
\dfrac{\partial}{\partial \bn}(\nabla p \, \kappa)\cdot \bn 
\Big) q 
+ \bdiv_{\partial \OF} (\bn) \dfrac{\partial p}{\partial \bn}\kappa \,q \Big]d \mathcal{S}.
\end{align}
We now choose $\btau \in \bcW$ such that $\btau=0$ on $\GD \cup \GN.$ This yields $ q=-(\btau \bn) \cdot \bn$ on $\S$.
Substituting this value of $q,$ equations \eqref{int.first} and \eqref{int.fifth} in \eqref{eq.sd} on $\S=\partial \OF,$ we have \eqref{eq.sd.bc2}.
\newline
\medskip
\underline{{\bf Step III:}}  \quad
In this step, we prove 
\eqref{eq.bc.u}, \eqref{eq.bc.sign} and \eqref{eq.bc.sig.prime}.
Using \eqref{eq.sd.bc2} and $\btau \bn=0$ on~$\GN,$ we have 
\begin{align*}
-\mu^2 \int_{\GD} \btau \bn \cdot \bu' d \mathcal{S}&=\int_{\GD \cup \GN}\dfrac{1}{\rS} \nabla \bG :\btau d \mathcal{S}
-\int_{\GD} \dfrac{1}{\rS}\btau \bn\cdot \dfrac{\partial \bG}{\partial \bn}d \mathcal{S}\notag\\& \quad
-\int_{\GD} \dfrac{1}{\rS} \bdiv_{\partial \OS} (\bn)\btau \bn \cdot \bG d \mathcal{S}
+\mu^2 \int_{\GD \cup \GN} \kappa \nabla \bu: \btau d \mathcal{S}.
\end{align*}
Since $\GD \cap \GN =\varnothing,$ choosing $\btau=0$ on $\GN,$ we have 
\begin{align*}
-\mu^2 \bn^{\top} \otimes \bu'=
\dfrac{1}{\rS} \Big[\nabla \bG -\bn^{\top} \otimes \dfrac{\partial \bG}{\partial \bn}-\bdiv_{\partial \OS}(\bn)\bn^{\top} \otimes \bG
\Big] +\mu^2 \kappa \nabla \bu, \quad \mbox{on} \quad \GD
\end{align*}
which proves \eqref{eq.bc.u}.
By Definition \ref{def.sd} we have
\[
\sigma'_{ij}=\dot{\sigma}_{ij} -\nabla \sigma_{ij}\cdot \bV.
\]
Using $\dot{\sigma} \bn=0$ on $\GN,$ 
we have
\[
(\bsig' \bn)_{i}=-\sum_{j=1}^d \nabla \sigma_{ij}\cdot \bV \bn^j,
\]
and this directly implies \eqref{eq.bc.sign}.
Again since $(\dot{\bsig},\dot{p}) \in \mathbb{X},$ $\dot{\bsig}\bn+\dot{p}\bn=0,$
we have
\[
(\bsig' \bn+p' \bn)_{i}=-\sum_{j=1}^d \Big(\nabla \sigma_{ij}\cdot \bV \bn^j+\nabla p \cdot \bV
\bn_j\Big),
\]
which directly implies \eqref{eq.bc.sig.prime}.
Hence combining all the three steps, we conclude that the shape derivative $\big((\bsig', p'), \br'\big)$ satisfies \eqref{eq.sh.der.D1} for all $((\btau,q),\bs) \in 
\Big((\bar{\mathcal{D}_1} \times \bar{\mathcal{D}_2}) \cap \mathbb{X} \Big) \times \bcQ.$ 
This completes the proof.
\dela{\begin{align}
\mathcal{D}_2 \big(((\btau_{1},q_{1}),\br_{1}),((\btau,q),\bs)\big)&=
-\int_{\partial \OS} \dfrac{1}{\rS} \bdiv \bsig(\vecx) \cdot \bdiv \btau(\vecx) \langle \mathbf{V},\bn \rangle d \mathcal{S}(\vecx)\notag\\&\quad-\int_{\partial \OF}\frac{1}{\rF}
\nabla p(\vecx) \cdot \nabla q(\vecx)\langle \mathbf{V},\bn_F \rangle d\mathcal{S}(\vecx)\notag\\&\quad
+\mu^2\Big[\int_{\partial \OS} \cC^{-1}\bsig(\vecx):\btau(\vecx)\kappa d\mathcal{S}(\vecx)\notag\\&\quad
+\int_{\partial \OF}\frac{1}{\rF c^2}p(\vecx)q(\vecx) \kappa d\mathcal{S}(\vecx)\notag\\ &\quad
+\int_{\partial \OS}\btau(\vecx):\br(\vecx)\kappa d\mathcal{S}(\vecx)
+\int_{\partial \OS}\bsig(\vecx):\bs(\vecx)\kappa d\mathcal{S}(\vecx)\Big]
\end{align}}
\dela{
Using tangential Green's formula on second and fourth term of \eqref{eq.sd} and using density argument we get \eqref{sd.bc}, which is true for all $((\btau,q),\bs) \in \mathbb{X} \times \bcQ$.
Hence we conclude that the shape derivative $\big((\bsig', p'), \br'\big)$ satisfies \eqref{eq.sh.der} with boundary conditions \eqref{sd.bc}.
This completes the proof.}
\end{proof}
Before proceeding to the next theorem, let us consider the perturbation of the domain $\O:=\OS\cup\OF$ with respect to $\bT^{\delta}$ (where $\bT^{\delta}$  is given by \eqref{equ:T eps define} and \eqref{equ:T e cond}). We consider another boundary variation $\mathbf{V}_1$ which is of the form
\[
\bV_1(\vecx)
:=
\kappa_1(\vecx)\vecn{}(\vecx),\,\,i=1,2
\]
where $\kappa_1$ has the same regularity as in \eqref{equ:V def}.
We refer to Appendix \ref{sec.app} for further details about second order variations.
We will show that shape Hessian $\big((\bsig'', p''), \br''\big)$ exists. For this, we need to consider the shape derivative $\big((\bsig'_{\delta}, p'_{\delta}), \br'_{\delta}\big)$ exists in $\H(\mathbf{div};\OS^{\delta}) \times \mathrm{H}^1(\OF^{\delta}) \times \bcQ^{\delta}$ and satisfies
\eqref{eq.sh.der.D1}. Furthermore, following Theorem \ref{thm.mat.der}, one can prove existence of second order material derivative $((\ddot{\bsig},\ddot{p}),\ddot{\br})$  in $\mathbb{X} \times \bcQ.$ In the current paper, we omit the characterization of the second order material derivative.

\begin{theorem} \label{thm.sh.hes} 
 Under the above mathematical settings as in Theorem \ref{thm.sh.der}, shape Hessian $\big((\bsig'', p''), \br''\big)$ belonging in $\big((\H(\mathbf{div};\OS) \times \mathrm{H}^1(\OF)) \times \bcQ\big)$ 
that satisfies
\begin{subequations}\label{eq.sh.hes.D1}
\begin{align}
  \bdiv \bsig''+\mu^2\rS \bu'' &=\0 \qquad \qquad \qquad \qquad \qquad \qquad \qquad \qquad   \quad \, \qin \OS,
\label{eq.dom.S.h} \\
 \bsig'' &= \cC \beps(\bu'') \qquad \qquad \qquad \qquad  \qquad \qquad \qquad \,\,\, \qin\OS,
\label{eq.sig.dom.h} \\
\bu''&=\dfrac{1}{\mu^2}( \bdiv_{\partial \OS} \bn) \mathbb{H}_1+\dfrac{1}{\mu^2} \mathbb{H}_3 \bn \quad \qquad \qquad
  \quad \,\, \, \qon  \GD, \label{eq.bc.u.h}\\
\bsig'' \bn&= \ddot{\bsig}\bn - (\nabla^{\top} \otimes \bsig) (\dot{\bV} \otimes \bn)
-(\nabla^{\top} \otimes \bsig') (\bV_1 \otimes \bn)\notag\\
 &\quad -\mathcal{M}(\nabla^{\top} \otimes \bsig) (\bV \otimes \bn)\qquad \qquad  \qquad  \quad \quad  \,\qon \GN, \label{eq.bc.sign.h}\\
\Delta p'' +\frac{\mu^2}{c^2}p''&=0 \qquad 
\quad \qquad \qquad \qquad   \qquad \qquad \qquad  \quad \,\, \,\,
\quad \qin\OF,\label{eq.dom.F.h}\\
\bsig'' \bn+p'' \bn &=\ddot{\bsig}\bn - (\nabla^{\top} \otimes \bsig) (\dot{\bV} \otimes \bn)
-(\nabla^{\top} \otimes \bsig') (\bV_1 \otimes \bn)- \mathcal{M}(\nabla^{\top} \otimes \bsig) (\bV \otimes \bn)
\notag\\
& \quad +\ddot{p}\bn - \Big(\bV^{\top}\dot{(\nabla p)} +\bV_1^{\top}(\nabla p')+\dot{\bV}_1
^{\top} \nabla p \Big)\bn
   \,\,  \qon \S,\label{eq.bc.sig.prime.h}\\
 \dfrac{1}{\rF} 
\dfrac{\partial p''}{\partial \bn}
-\mu^2 \bu'' \cdot \bn
&
=-\dfrac{1}{\rS} (\bdiv_{\partial \OF}\bn) \mathbb{H}_1 \cdot \bn-\mathbb{H}_3 \bn \cdot \bn 
-\bdiv \mathbb{H}_2 +\bdiv_{\partial \OF}(\bn)\mathbb{H}_2 \cdot \bn 
\notag\\ & \quad +\mathbb{H}_4
\qquad 
\quad \qquad \qquad \qquad   \quad \quad \qquad \qquad 
\quad \, \qon \S,\label{eq.sd.bc2.h}
\end{align}
\end{subequations}
where 
\begin{subequations}\label{def.Hi}
\begin{align}
\dela{\tilde{G}&:= \bsig' \nabla_{\partial \OS} \kappa
- \dfrac{\partial}{\partial \bn} \big(\kappa\big) \bsig' \bn+\bdiv \big(\kappa \bsig' \big)-\bsig' \kappa 
\dfrac{\partial}{\partial \bn} (\bn),\notag\\}
\mathbb{H}_1&:=-\dfrac{1}{\rS}\Big[ \bdiv(\kappa \bsig')-\dfrac{\partial (\kappa)}{\partial \bn}\bsig' \bn- \bsig' (\nabla_{\partial \OS} \kappa)+ \bsig' \dfrac{\partial (\bn)}{\partial\bn}
+\bG' +\kappa_1 \dfrac{\partial \bG}{\partial \bn}+\kappa_1 \bdiv_{\partial \OS}(\bn) \bG
\Big],\\
\mathbb{H}_2&:=-\Big[ \dfrac{\kappa}{\rF}(\nabla p)'+\dfrac{\kappa_1 \kappa}{\rF} \dfrac{\partial (\nabla p)}{\partial \bn}+\kappa_1 \dfrac{\partial \kappa}{\partial \bn}\nabla p+\kappa_1 \kappa \bdiv_{\partial \OF}(\bn)\nabla p  \Big],\\
\mathbb{H}_3&:= -\mu^2\Big[ (\kappa_1 \nabla \bu'+\kappa \nabla \bu' +\kappa_1 \Big(\dfrac{\partial}{\partial \bn}(\kappa \nabla \bu)+\bdiv_{\partial \OS}(\bn)\kappa \nabla \bu \Big)
\Big],\\
\mathbb{H}_4&:= \mu^2 \Big[\dfrac{1}{\rF c^2}(\kappa_1p'+\kappa p')+\dfrac{\kappa_1}{\rF c^2}\dfrac{\partial}{\partial \bn}(\kappa p)+\bdiv_{\partial \OS}(\bn)\kappa p \Big],
\end{align}
\end{subequations}
and $\mathcal{M}(f)$ denotes the material derivative of a given function $f$.
\end{theorem}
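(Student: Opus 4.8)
The plan is to realise the shape Hessian as the shape derivative, along the second velocity field $\bV_1$, of the first shape derivative, and to repeat one order higher the three-step argument of Theorem~\ref{thm.sh.der}. As recorded before the statement, on the twice-perturbed domain $\O^\delta$ the shape derivative $((\bsig'_\delta,p'_\delta),\br'_\delta)$ exists in $\H(\mathbf{div};\OS^\delta)\times\mathrm{H}^1(\OF^\delta)\times\bcQ^\delta$ and satisfies the $\delta$-analogue of the system~\eqref{eq.sh.der.D1}. Existence of the limit object $((\bsig'',p''),\br'')$ is guaranteed by the existence of the second material derivative $((\ddot{\bsig},\ddot p),\ddot{\br})$ (Theorem~\ref{thm.mat.der} applied to the shape-derivative problem) together with the shape--material relation $f''=\mathcal M(f')-\nabla f'\cdot\bV_1$ of Definition~\ref{def.sd}; the uniform bound~\eqref{resolvent.eps} again controls the difference quotients in $\delta$ exactly as in~\eqref{arg:inf-sup1}--\eqref{arg:inf-sup3}. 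I would then test the $\delta$-version of~\eqref{eq.sh.der.D1} against $((\btau,q),\bs)\in\big((\bar{\mathcal D}_1\times\bar{\mathcal D}_2)\cap\mathbb X\big)\times\bcQ$, differentiate in $\delta$, and read off the system~\eqref{eq.sh.hes.D1}.

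\textbf{Step~I} recovers the interior relations \eqref{eq.dom.S.h}, \eqref{eq.sig.dom.h} and~\eqref{eq.dom.F.h}. Choosing test functions with compact support in $\OS$ and $\OF$, and using that the first shape derivative already fulfils the \emph{homogeneous} interior equations \eqref{eq.dom.S}, \eqref{eq.sig.dom}, \eqref{eq.dom.F}, their differentiation in $\delta$ produces no boundary contribution, so a density argument yields the homogeneous system for $((\bsig'',p''),\br'')$; the constitutive identity~\eqref{eq.sig.dom.h} follows, as in Theorem~\ref{thm.sh.der}, by differentiating the integrated-by-parts form~\eqref{eq.dom1}. This step is essentially routine.

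\textbf{Step~II} extracts the transmission condition~\eqref{eq.sd.bc2.h}. Here I would apply parts~(iv)--(v) of Lemma~\ref{pro:mat sha pro} to each boundary functional $\mathcal J_1,\mathcal J_2$ produced by integration by parts, and repeatedly invoke the tangential Green's formula of Lemma~\ref{lem.int.by.parts}. Differentiating the factor $\kappa$, the mean-curvature term $\bdiv_{\partial\OF}\bn$ and the tangential gradients along~$\bV_1$ generates the quantities $\mathbb H_1,\dots,\mathbb H_4$ of~\eqref{def.Hi}: $\mathbb H_1$ and $\mathbb H_3$ collect the shape derivative $\bG'$ of~\eqref{def.G} together with its normal-derivative and curvature corrections, while $\mathbb H_2$ and $\mathbb H_4$ assemble the corresponding pressure contributions. \textbf{Step~III} then uses the disjointness of $\GD$, $\GN$, $\S$ and the choice $\btau\bn=0$ on $\GN$ to isolate~\eqref{eq.bc.u.h}; the conditions~\eqref{eq.bc.sign.h} and~\eqref{eq.bc.sig.prime.h} follow by differentiating the first-order interface identities~\eqref{eq.bc.sign} and~\eqref{eq.bc.sig.prime} along $\bV_1$ through the shape--material relation, which introduces the second material derivative $\ddot{\bsig}$, the first shape derivative $\bsig'$, and the material derivatives $\dot\bV$, $\dot\bV_1$ and $\mathcal M(\nabla^{\top}\otimes\bsig)$ of the geometry in the displayed combinations.

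The hard part will be the bookkeeping in Step~II. Each first-order boundary term already carries the normal $\bn$, the curvature $\bdiv_{\partial\OS}\bn$, a tangential gradient and a factor $\kappa$, so its shape derivative along $\bV_1$ spawns second-order geometric objects---$\dot\bn$, $\dot\bV$, $\dot\bV_1$, products $\kappa_1\,\partial(\cdot)/\partial\bn$, and the material derivative $\mathcal M(\bG)$. Organising these into the compact forms $\mathbb H_1,\dots,\mathbb H_4$ without sign or normalisation errors, and verifying that the spurious interior contributions cancel by virtue of Step~I and equations~\eqref{eq.dom.S.h} and~\eqref{eq.dom.F.h}, is the delicate point; the remaining convergence estimates parallel those already established for the first shape derivative.
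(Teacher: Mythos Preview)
Your proposal is correct and follows essentially the same route as the paper: existence via the second material derivative and the shape--material relation~\eqref{def.app.sh}, then a three-step argument that mirrors Theorem~\ref{thm.sh.der}---Step~I for the interior equations by testing against compactly supported functions and passing $\delta\to0$, Step~II for the transmission condition~\eqref{eq.sd.bc2.h} via Lemma~\ref{pro:mat sha pro} and the tangential Green's formula of Lemma~\ref{lem.int.by.parts} (which is exactly where the $\mathbb{H}_i$ of~\eqref{def.Hi} emerge), and Step~III for~\eqref{eq.bc.u.h}, \eqref{eq.bc.sign.h}, \eqref{eq.bc.sig.prime.h} by localisation and the shape--material identity. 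Your diagnosis that the substance lies in the Step~II bookkeeping is accurate; the paper likewise treats Steps~I and~III as routine repetitions and records only the key intermediate boundary identity before reading off the $\mathbb{H}_i$.
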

\begin{proof}
Existence of shape Hessian $\big((\bsig'', p''), \br''\big)$ follows from Theorem \ref{thm.sh.der} and \eqref{def.app.sh}. 
We choose $((\btau,q),\bs) \in \Big((\bar{\mathcal{D}_1} \times \bar{\mathcal{D}_2}) \cap \mathbb{X} \Big)  \times \bcQ,$ and proceed in the similar lines as in \eqref{add.bc} and \eqref{bar.I7.new} to have
\begin{align*}
&\int_{\OS}\frac{1}{\rS}\bdiv \bsig'_{\delta} \cdot\bdiv\btau +\int_{\partial \OS}\frac{1}{\rS} \bG^{\delta} \cdot \bdiv \btau d \mathcal{S}
+\int_{\OF}\frac{1}{\rF} \nabla p'_{\delta}\cdot \nabla q 
-\mu^2 \int_{\OS}\cC^{-1}\bsig'_{\delta}:\btau
\notag\\&
-\mu^2 \int_{\OF}\frac{1}{\rF c^2}p'_{\delta}q
-\mu^2 \int_{\OS}\btau:\br'_{\delta}+\int_{\partial \OF}\frac{1}{\rF}
\nabla p^{\delta} \cdot \nabla q \, \kappa d\mathcal{S}
-\mu^2\int_{\partial \OS} \cC^{-1}\bsig^{\delta}:\btau\, \kappa d\mathcal{S}
\notag\\&
-\mu^2 \int_{\partial \OF}\frac{1}{\rF c^2}p^{\delta}q \,\kappa d\mathcal{S}
-\mu^2 \int_{\partial \OS}\btau:\br^{\delta} \, \kappa d\mathcal{S}+
\int_{\OS}\bsig'_{\delta}:\bs +\int_{\partial \OS}\bsig_{\delta}:\bs\,\kappa d\mathcal{S}=0.
\end{align*}
Proceeding in the same lines as in {\bf  Step I} following \eqref{add.bc} and \eqref{bar.I7.new}
 in the proof of Theorem \ref{thm.sh.der} for $\big((\bsig'_{\delta}, p'_{\delta}), \br'_{\delta}\big)$ and passing to the limit as $\delta \rightarrow 0,$ we achieve 
\eqref{eq.dom.S.h}, \eqref{eq.sig.dom.h} and \eqref{eq.dom.F.h}.
\par
Now we move to prove the boundary conditions satisfied by $\big((\bsig'', p''), \br''\big).$
Recurrent use of the same aguments used in {\bf Step II}, we have
\begin{align}\label{eq.bc.tau.h}
&\int_{\partial \OS} \dfrac{1}{\rS}\btau \bn \cdot \bdiv \bsig'' d \mathcal{S}+ \int_{\partial \OF} \dfrac{1}{\rF}  \dfrac{\partial p''}{\partial \bn}q d \mathcal{S}
\notag\\
&= \int_{\partial \OS} \mathbb{H}_1 \cdot \bdiv \btau d \mathcal{S}+ \int_{\partial \OF} \mathbb{H}_2 \cdot \nabla q d \mathcal{S}+ \int_{\partial \OS} \mathbb{H}_3 : \btau d \mathcal{S}+\int_{\partial \OF} \mathbb{H}_4 q d \mathcal{S},
\end{align}
where $\mathbb{H}_i$s are given by \eqref{def.Hi} for $i=1,2,3,4.$
Using Lemma \ref{lem.int.by.parts}, tangential Green's formula, $\dfrac{\partial \btau}{\partial \bn}\bn=0$ and $\dfrac{\partial q}{\partial \bn}=0$,  we obtain from \eqref{eq.bc.tau.h}
\begin{align}\label{all.h}
&\int_{\partial \OS} \dfrac{1}{\rS}\btau \bn \cdot \bdiv \bsig'' d \mathcal{S}+ \int_{\partial \OF} \dfrac{1}{\rF}  \dfrac{\partial p''}{\partial \bn}q d \mathcal{S} 
\notag\\
&=-\int_{\partial \OS}\Big[ \nabla \mathbb{H}_1 : \btau +
\btau \bn\cdot \dfrac{\partial \mathbb{H}_1}{\partial \bn} 
+
\bdiv_{\partial \OS} (\bn) \btau \bn \cdot \mathbb{H}_1 +\mathbb{H}_3 : \btau\Big]d \mathcal{S}
\notag\\
& \quad
- \int_{\partial \OF} \Big[\frac{1}{\rF}
q \bdiv \mathbb{H}_2 
+ \dfrac{1}{\rF}  \bdiv_{\partial \OF} (\bn) \mathbb{H}_2 \cdot \bn  q + \mathbb{H}_4 q\Big] d \mathcal{S}.
\end{align}
Recalling that $\partial \OS=\GD \cup \GN \cup \S$ and $\partial \OF=\S$ and choosing $\btau \in \bcW$ such that $\btau=0$ on $\GD \cup \GN$, we obtain the same equation as \eqref{all.h} with all integrals replaced by integrals over $\S$.  
Now by choosing $ q=-(\btau \bn) \cdot \bn$ in this resulting equation, we 
obtain \eqref{eq.sd.bc2.h}.

Using \eqref{eq.sd.bc2.h}, \eqref{all.h} and the same argument as in the proof of 
\eqref{eq.bc.u} we obtain \eqref{eq.bc.u.h}.
With the help of \eqref{def.app.sh}, we have \eqref{eq.bc.sign.h} and  \eqref{eq.bc.sig.prime.h}.
Hence, the shape Hessian $\big((\bsig'', p''), \br''\big)$ satisfies \eqref{eq.sh.hes.D1}, completing the proof.
\end{proof}

\begin{rem}
Both problems \eqref{eq.sh.der.D1} and  \eqref{eq.sh.hes.D1} for the shape derivative and shape Hessian, respectively can be solved by using the methods in \cite{Garcia,Gat06,GMM,MMT} etc.
\end{rem}

\subsection{Computation of  stochastic moments} \label{sec.comp}
In Sections \ref{sec.mat.der} and \ref{sec.shape.der}, we have defined material derivative, shape derivative and shape Hessian in which the quantities $\kappa$ and $\kappa_1$ are deterministic. 
Since \eqref{equ:perturbed prob} is posed on a domain with uncertainty located boundaries (see \eqref{equ:rand inter}), these derivatives also depend on $\omega.$ Thus, we compute the mean and the variance of the random solutions. The main result of this paper is stated below.

\begin{theorem} \label{thm.main}
Let $((\bsig^{\eps}(\omega),p^{\eps}(\omega)),\br^{\eps}(\omega)) $ be solution of \eqref{compactVar.eps} with the random interface $\partial \OS^{\eps}(\omega)$ given by \eqref{equ:rand inter}, and let $((\bsig,p),\br)$ be solution of the unperturbed problem (i.e., \eqref{compactVar.eps} for $\eps=0$) with reference interface $\partial \OS.$ 
Assume that the perturbation function $\kappa=\kappa_1$ belongs to $L^k(\mathfrak{U},C^{2,1}(\partial \OS))$ for an integer $k$
and $\bF \in (\mathrm{L}^2(B_R))^d \cap (\mathbb{X} \times \bcQ)^{*}.$ 
Then for sufficiently small $\eps \geq0,$ there exists compact set $K \subset (\OS \cup \OF) \cap (\OS^{\eps} \cup \OF^{\eps})$ such that 
\begin{itemize}
\item[1.] \label{lem.stoc.tay}
 $((\bsig^{\eps}(\omega),p^{\eps}(\omega)),\br^{\eps}(\omega))$ admits the asymptotic expansion
$\mP-\mbox{a.e.}\,\omega \in \mathfrak{U}$, i.e. for $\vecx \in (\OS \cup \OF) \cap (\OS^{\eps} \cup \OF^{\eps})$
\begin{align}\label{stoc.tay.}
((\bsig^{\eps}(\vecx,\omega),p^{\eps}(\vecx,\omega)),\br^{\eps}(\vecx,\omega))
&=((\bsig(\vecx),p(\vecx)),\br(\vecx))
+\eps ((\bsig'(\vecx,\omega),p'(\vecx,\omega)),\br'(\vecx,\omega))
\notag\\ & \quad
+\dfrac{\eps^2}{2}((\bsig''(\vecx,\omega),p''(\vecx,\omega)),\br''(\vecx,\omega))+O(\eps^3).
\end{align}
\item[2.] \label{meanvalue.lem} 
The mean and variance of the solution $((\bsig^{\eps}(\omega),p^{\eps}(\omega)),\br^{\eps}(\omega))$ can be approximated, respectively, by 
\begin{align}
\mE[((\bsig^{\eps},p^{\eps}),\br^{\eps})]&=((\bsig,p),\br)+O(\eps^2),\label{meanvalue}\\
\big((\var(\bsig^{\eps}),\var(p^{\eps})),\var(\br^{\eps})\big)&=\eps^2\Big( \big(\mE [(\bsig')^2],\mE[(p')^2] \big), \mE[(\br')^2] \Big)+O(\eps^3),\label{var.main}
\end{align}
where for any tensor $\btau$, we denote by $\btau^2$ the tensor product $\btau:\btau$.

%
\end{itemize}
\end{theorem}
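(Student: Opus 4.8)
The plan is to reduce the stochastic statement to the deterministic second-order shape Taylor expansion established in Theorems~\ref{thm.mat.der}, \ref{thm.sh.der} and~\ref{thm.sh.hes}, applied pathwise in $\omega$, and then to push that expansion through the moment functionals. Fix a realization $\omega\in\mathfrak{U}$, so that $\kappa(\cdot,\omega)$ is a deterministic perturbation amplitude of the assumed regularity. First I would record the second-order material-derivative expansion on the reference configuration: combining Theorem~\ref{thm.mat.der} (which furnishes $(\dot\bsig,\dot p),\dot\br$) with the second material derivative $((\ddot\bsig,\ddot p),\ddot\br)$ whose existence is noted just before Theorem~\ref{thm.sh.hes}, the pulled-back solution satisfies
\begin{align*}
\big((\bsig^{\eps}\circ\bT^{\eps},p^{\eps}\circ\bT^{\eps}),\br^{\eps}\circ\bT^{\eps}\big)
=\big((\bsig,p),\br\big)+\eps\big((\dot\bsig,\dot p),\dot\br\big)
+\tfrac{\eps^2}{2}\big((\ddot\bsig,\ddot p),\ddot\br\big)+O(\eps^3)
\end{align*}
in $\mathbb X\times\bcQ$. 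The $O(\eps^3)$ control is where most of the work sits: since $\bT^{\eps}(\vecx)=\vecx+\eps\bV(\vecx)$ is affine in $\eps$ with smooth coefficients, the pulled-back bilinear forms $\mathbb D^{\eps}$ and load $\ell^{\eps}$ depend smoothly (indeed analytically) on $\eps$ through $\gamma(\eps,\cdot)$, $J_{\bT^{\eps}}$ and $\mathcal{A},\tilde{\mathcal{A}}$; together with the $\eps$-uniform stability bound of Proposition~\ref{specT1} this makes $\eps\mapsto(\bsig^{\eps}\circ\bT^{\eps},\dots)$ three times continuously differentiable on $[0,\eps_0]$, whence Taylor's theorem with remainder yields the stated $O(\eps^3)$ with a constant controlled uniformly in $\eps$.

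Next I would convert the material-derivative expansion into the pointwise shape-derivative expansion~\eqref{stoc.tay.}. Writing $\vecy=(\bT^{\eps})^{-1}(\vecx)$ and using $(\bsig^{\eps}\circ\bT^{\eps})(\vecy)=\bsig^{\eps}(\vecx)$ together with the chain-rule identity $\bsig'=\dot\bsig-\nabla\bsig\cdot\bV$ recorded in the proof of Theorem~\ref{thm.sh.der} (and its second-order counterpart from Definition~\ref{def.sd} and~\eqref{def.app.sh}), the expansion in material derivatives is re-expanded at the fixed Eulerian point $\vecx$. This is legitimate only where both $\vecx$ and its preimage lie in the respective domains, which is exactly why the statement is localized: for $\eps$ small the reference and perturbed configurations coincide outside a neighbourhood of $\partial\OS$, so on any compact $K\subset(\OS\cup\OF)\cap(\OS^{\eps}\cup\OF^{\eps})$ all of $\bsig,\bsig',\bsig'',\dots$ are defined and smooth and the remainder is uniform; this delivers part~1.

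For part~2 I would take moments of~\eqref{stoc.tay.}. The shape derivative solves~\eqref{eq.sh.der.D1}, a linear well-posed boundary value problem whose data depend linearly on $\kappa$ through $\bV=\kappa\bn$ and through $\bG$ in~\eqref{def.G}; hence $\kappa\mapsto((\bsig',p'),\br')$ is linear and bounded, and the centering assumption~\eqref{equ:kappa sym}, $\mE[\kappa]=0$, forces $\mE[((\bsig',p'),\br')]=0$. Taking expectation of~\eqref{stoc.tay.} and interchanging $\mE$ with the expansion (justified by $\kappa\in L^k(\mathfrak{U},C^{2,1})$) leaves $\mE[((\bsig^{\eps},p^{\eps}),\br^{\eps})]=((\bsig,p),\br)+\tfrac{\eps^2}{2}\mE[((\bsig'',p''),\br'')]+O(\eps^3)=((\bsig,p),\br)+O(\eps^2)$, which is~\eqref{meanvalue}. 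For the variance I would subtract the mean, so that $((\bsig^{\eps},p^{\eps}),\br^{\eps})-\mE[((\bsig^{\eps},p^{\eps}),\br^{\eps})]=\eps((\bsig',p'),\br')+O(\eps^2)$ pathwise, then form the tensor square $\btau^2=\btau:\btau$ and take expectation; the cross term is $O(\eps^3)$ and the leading term is $\eps^2\big(\big(\mE[(\bsig')^2],\mE[(p')^2]\big),\mE[(\br')^2]\big)$, giving~\eqref{var.main}.

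The hard part will be making the remainder genuinely third order and uniform: one must verify that the third $\eps$-derivative of the pulled-back solution exists and is bounded on $[0,\eps_0]$, and---for the stochastic conclusion---that the resulting constant is integrable against $\mP$, which is what forces the moment exponent $k$ to be taken large enough. Since $((\bsig',p'),\br')$ is linear in $\kappa$ while $((\bsig'',p''),\br'')$ is quadratic in $\kappa$ (the Hessian data in~\eqref{def.Hi} being built from $\kappa=\kappa_1$), the products $\bsig'\!:\!\bsig''$ and $(\bsig'')^2$ entering the variance error are cubic and quartic in $\kappa$, so $k\ge 4$ suffices to control them by Cauchy--Schwarz. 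The second delicate point, the interchange of $\mE$ with both the Taylor expansion and the tensor contraction, again rests on these uniform $L^k(\mathfrak{U})$ bounds.
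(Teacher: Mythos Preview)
Your proposal is correct and follows essentially the same strategy as the paper: apply the deterministic shape Taylor expansion pathwise for fixed $\omega$, use linearity of the shape derivative in $\kappa$ together with $\mE[\kappa]=0$ to kill the first-order mean, and expand the second moment to extract the variance. The paper's own proof is considerably terser: for part~1 it simply invokes the deterministic expansion~\eqref{det.tay.} from the appendix rather than going through the material-derivative expansion and then converting to Eulerian coordinates as you do; and for the variance it computes $\mE[(\bsig^{\eps})^2]$ and $(\mE[\bsig^{\eps}])^2$ separately and subtracts, whereas you center first and then square---the two are of course algebraically equivalent. Your additional discussion of the $O(\eps^3)$ remainder (via smooth dependence of the pulled-back forms and the $\eps$-uniform stability of Proposition~\ref{specT1}) and of the integrability in $\omega$ needed to pass $\mE$ through the expansion supplies justification that the paper leaves implicit.
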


\begin{proof}
We first start with proving \eqref{stoc.tay.}.
With the shape derivative and shape Hessian of $((\bsig,p),\br)$ given in 
Theorems \ref{thm.sh.der} and \ref{thm.sh.hes} and equation \eqref{equ:V def}, using the Taylor expansion \eqref{det.tay.} for an arbitrary, fixed realization $\kappa(\cdot,\omega),\omega \in \mathfrak{U},$ we have the stochastic counterpart \eqref{stoc.tay.}. 
\par
We now move to prove \eqref{meanvalue}.
On taking expectation, we have
\begin{align*}
\mE[\bsig^{\eps}(\vecx,\cdot)]=\mE\Big[\bsig(\vecx)+\eps \bsig'(\vecx,\cdot)+\dfrac{\eps^2}{2}\bsig''(\vecx,\cdot)+O(\eps^3)\Big]\quad \mbox{for} \quad \vecx \in (\OS \cup \OF) \cap (\OS^{\eps} \cup \OF^{\eps}).
\end{align*}
Since $\big((\bsig', p'), \br'\big)$ depends linearly on $\kappa$, exploiting $\mE [\kappa] = 0$, see~\eqref{equ:kappa sym}, it can be seen that $\mE[\big((\bsig', p'), \br'\big)]$~satisfies~\eqref{eq.sh.der.D1} with zero boundary data, and hence $\mE [\big((\bsig', p'), \br'\big)] = 0.$
This  proves \eqref{meanvalue}.
\item
We note that the quantity $\var(\bsig^{\eps})$ for a tensor $\bsig^{\eps}$ is given by the following
\begin{align}\label{var.sig.def}
\var(\bsig^{\eps}):=\mE \Big[(\bsig^{\eps}-\mE[\bsig^{\eps}]):(\bsig^{\eps}-\mE[\bsig^{\eps}])  \Big]
=\mE[\bsig^{\eps}:\bsig^{\eps}] -\Big(\mE[\bsig^{\eps}]:\mE[\bsig^{\eps}]\Big).
\end{align}
In similar manner, $\var(\br^{\eps})$
is given by 
 \begin{align*}
\var(\br^{\eps})
=\mE[\br^{\eps}:\br^{\eps}] -\Big(\mE[\br^{\eps}]:\mE[\br^{\eps}]\Big)
\end{align*}
Using stochastic Taylor expansion \eqref{stoc.tay.} we note that $\mathbb{P-}$a.e. $\omega \in \mathfrak{U}$
\begin{align}\label{stoc.sig.tay}
\bsig^{\eps}(\vecx,\omega)=\bsig(\vecx)+\eps \bsig'(\vecx,\omega)+\dfrac{\eps^2}{2}\bsig''(\vecx,\omega)+O(\eps^3).
\end{align}
Keeping in mind $\bsig^2(\vecx)=\bsig(\vecx):\bsig(\vecx)$ and $(\bsig'(\vecx,\omega))^2=\bsig'(\vecx,\omega):\bsig'(\vecx,\omega),$ we see that $\mathbb{P-}$a.e. $\omega \in \mathfrak{U}$
\begin{align}\label{eq.sh.hes}
&(\bsig^{\eps}(\vecx,\omega))^2=\Big(\bsig(\vecx)+\eps \bsig'(\vecx,\omega)+\dfrac{\eps^2}{2}\bsig''(\vecx,\omega)+O(\eps^3)\Big)^2 
\notag\\
&=\bsig^2(\vecx)+\eps^2 (\bsig'(\vecx,\omega))^2
+2 \eps \bsig(\vecx):\bsig'(\vecx,\omega)
+\eps^2 \bsig(\vecx):\bsig''(\vecx,\omega)+O(\eps^3).
\end{align}
Hence, on taking expectation on both sides of \eqref{eq.sh.hes}, we have
\begin{align*}
\mE \Big[\bsig^{\eps}(\vecx)^2\Big]&=\bsig^2(\vecx)+\eps^2\mE\Big[(\bsig'(\vecx))^2\Big] +\eps^2 \bsig(\vecx):\mE[\bsig''(\vecx)] +O(\eps^3).
\end{align*}
On taking expectation and then squaring on both sides of \eqref{stoc.sig.tay} we have
\begin{align*}
\Big(\mE[(\bsig^{\eps}(\vecx)]\Big)^2&=\Big(\bsig(\vecx)+\dfrac{\eps^2}{2} \mE[\bsig''(\vecx)] +O(\eps^3)\Big)^2=\bsig^2(\vecx)+\eps^2 \bsig(\vecx):\mE[\bsig''(\vecx)] +O(\eps^3).
\end{align*}  
This essentially concludes from \eqref{var.sig.def}
\begin{align}\label{var.sig}
\var(\bsig^{\eps}(\vecx))=\mE\Big[(\bsig^{\eps}(\vecx)^2\Big]
-\Big(\mE[(\bsig^{\eps}(\vecx)]\Big)^2
=\eps^2\mE \Big[(\bsig'(\vecx))^2\Big]+O(\eps^3).
\end{align}
Similarly for $p^{\eps}$ and $r^{\eps}$ we have
\begin{align}
&\var(p^{\eps}(\vecx))=\eps^2\mE \Big[(p'(\vecx))^2\Big]+O(\eps^3),
\quad
\var(\br^{\eps}(\vecx))=\eps^2\mE \Big[(\br'(\vecx))^2\Big]+O(\eps^3).\label{var.r}
\end{align}
Combining \eqref{var.sig}-\eqref{var.r} we
arrive at \eqref{var.main}, finishing the proof of this theorem. 
\end{proof}
\begin{rem} Observing \eqref{eq.cor} and $\mE[\bsig'(\vecx)]=0,$ it can be observed that $$\var(\bsig(\vecx))=\mbox{Cor}(\bsig(\vecx),\bsig(\vecy))|_{\vecx=\vecy}.$$ Similarly, $\var(p(\vecx))=\mbox{Cor}(p(\vecx),p(\vecy))|_{\vecx=\vecy}, \quad\var(\br(\vecx))=\mbox{Cor}(\br(\vecx),\br(\vecy))|_{\vecx=\vecy}.$
\end{rem} 
\section{An Example}\label{sec.ex}
In this section, we present a particular example of the solid--fluid problem in a square domain. In this example we will solve a slightly different problem \eqref{equ:perturbed prob} with \eqref{dual-problem_6.ep} replaced by 
\begin{align}\label{nonhomo.bc}
\bsig^{\eps}(\omega)\bn^{\eps}+p^{\eps}(\omega) \bn^{\eps}=g \qon \S^{\eps}(\omega)
\end{align}
and $\GN=\varnothing.$
\begin{figure}[h]
	\centering
	\begin{pdfpic}
\psset{unit=0.5cm}
		\begin{pspicture}
		%
		\pspolygon(1,10)(1,0)(11,0)(11,10)
		\pspolygon(4,7)(4,3)(8,3)(8,7)
		\psline{->}(6,-1)(6,12)
		\psline{->}(0,5)(12,5)
		
		\def\blockf{
			\multirput(0,0)(0.5,0){21}{\psline(0,0)(1,1)}
		}
		\rput(1,10){\blockf}
		
		\uput[ul](6,11){$\Gamma_1$}
		\uput[d](6,-1){$\Gamma_3$}
		\uput[l](1,2.5){$\Gamma_4$}
		\uput[r](11,2.5){$\Gamma_2$}
		
		\uput[u](7,7){$\Sigma_1$}
		\uput[d](7,3){$\Sigma_3$}
		
		\uput[r](8,6){$\Sigma_2$}
		\uput[l](4,6){$\Sigma_4$}
		
		\psline{->}(3.5,0)(3.5,-0.8)
		\psline{->}(8.5,0)(8.5,-0.8)
		\psline{->}(1,7.5)(0.2,7.5)
		\psline{->}(11,7.5)(11.8,7.5)
		
		\uput[d](3.5,-0.8){$(0,-1)$}
		\uput[d](8.5,-0.8){$(0,-1)$}
		\uput[l](0.2,7.5){$(-1,0)$}
		\uput[r](11.8,7.5){$(1,0)$}

		\def\blockpt{
			\multirput(0,0)(0.5,0){7}{\psdot[dotsize=2pt]}
		}
		\multirput(4.5,3.5)(0,0.5){7}{\blockpt}
		\end{pspicture}
	\end{pdfpic}
	\caption{ }
	\label{fig:1}
\end{figure}

All the theoretical results in Section \ref{sec.shape.der} still hold, except that \eqref{dual-problem_6.ep} will have a correction term due to the non-homogeneous condition \eqref{nonhomo.bc}. 
We consider \eqref{equ:perturbed prob} for $\eps=0$ on the domains $\OF:=[-1,1]^2\subset \mathbb{R}^2$ and $\OS:=[-2,2]^2\setminus \OF$ and take the parameters $\mu^2=6 \pi^2,\,\rS=3,\,\rF=\lambda=\nu=1$. Then we choose the data $\bF$ so that the exact solution for the displacement, pressure and the stress tensor of the considered unperturbed problem are given, respectively, by
\begin{align*}
\bu(\vecx,\vecy)&=\begin{pmatrix}
\sin\pi \vecx\, \sin \pi \vecy\\
\sin\pi \vecx \,\sin \pi \vecy
\end{pmatrix} \,\,  \forall (\vecx,\vecy)\in \OS 
, \quad \quad
p(\vecx,\vecy)=\cos\pi \vecx\, \cos \pi \vecy, \,\,  \forall (\vecx,\vecy)\in \OF\\
\bsig(\vecx,\vecy)&=\pi\begin{pmatrix}
\sin\pi (\vecx+\vecy)+2\cos \pi \vecx \sin \pi \vecy & \sin \pi(\vecx+\vecy)\\
\sin \pi(\vecx+\vecy) & \sin\pi (\vecx+\vecy)+2\sin \pi \vecx \cos \pi \vecy
\end{pmatrix}
  \,\,  \forall (\vecx,\vecy)\in \OS.
\end{align*}
Let the random interface $\Gamma^\eps(\omega)$ be given~by
\[
\Gamma^\eps(\omega)
=
\{
\vecx + \eps\kappa(\vecx,\omega)\vecn(\vecx)
:
\vecx\in\Gamma
\}
\]
where $\Gamma:=(\cup_{i=1}^4 \Gamma_i) \cup (\cup_{i=1}^4 \Sigma_C^i)$.
Next, let us consider the perturbed domain 
 \[\OF^{\eps}(\omega):=[-1+\eps a(\omega),1+\eps a(\omega)]^2 \quad \mbox{and}\quad \OS^{\eps}(\omega):=[-2+\eps a(\omega),2+2\eps a(\omega)]^2
\setminus \OF^{\eps}(\omega) \] 
where 
the perturbation parameter $\kappa(\vecx,\omega) = a(\omega)$ has a constant (but random) value over the whole $\partial \OF$. The random variable $a(\omega)$ takes values in $[-1,1]$ and is centred so that $\mE[\kappa] \equiv \mE[a] = 0$. Further, we consider
 $a(\omega)$ is a uniformly distributed random variable with values in $[-1,1]$ and probability density function (PDF) $\rho_1(t) = 1/2$, so that
\begin{equation}\label{cov_rho1}
 \Covv[\kappa](\vecx,\vecy) \equiv \mE[a^2] = \int_{-1}^1 t^2 \rho_1(t) \, dt = \frac{1}{3}.
\end{equation}
Solution for the displacement and pressure, of \eqref{equ:perturbed prob} are given, respectively, by
\begin{align*}
\bu^\eps(\vecx,\vecy,\omega)&=\begin{pmatrix}
\sin\pi (\vecx-\eps a(\omega) )\, \sin\pi (\vecy-\eps a(\omega) ) \\
\sin\pi (\vecx- \eps a(\omega))\,\sin \pi (\vecy-\eps a(\omega))
\end{pmatrix}\quad \forall (\vecx,\vecy)\in \OS^\eps \notag\\ 
p^\eps(\vecx,\vecy,\omega)&=\cos\pi (\vecx-\eps a(\omega) )\, \cos\pi (\vecy-\eps a(\omega) )  \qquad \,\,\, \forall (\vecx,\vecy)\in \OF^\eps.
\end{align*}

We now split the verification in three steps. In the first two steps, we will verify equations satisfied by the shape derivative (Theorem \ref{thm.sh.der}) and shape Hessian (Theorem \ref{thm.sh.hes}). In the last step, we will verify Theorem \ref{thm.main}.

\underline{{\bf Step I:} }  \quad
Exploiting \eqref{def.app.sd} and Lemma \ref{lem.cpt.k}, we see that the shape derivative of $\bu$ and $p$ denoted, 
respectively, by $\u'$ and $p'$ are given by 
\begin{align*}
\u'(\vecx,\vecy,\omega)&=-a(\omega)\pi\begin{pmatrix}
\sin\pi (\vecx+\vecy ) \\
\sin\pi (\vecx+\vecy)
\end{pmatrix}\quad 
\qquad \qquad \qquad \qquad
\forall (\vecx,\vecy)\in \OS, \notag\\ 
\bsig'(\vecx,\vecy,\omega)&=-a(\omega)\pi^2 \begin{pmatrix}
4\cos\pi (\vecx+\vecy ) & 2\cos\pi (\vecx+\vecy) \\
 2\cos\pi (\vecx+\vecy) & 4\cos\pi (\vecx+\vecy)
\end{pmatrix}\quad \forall (\vecx,\vecy)\in \OS, \notag\\ 
p'(\vecx,\vecy,\omega)&=a(\omega) \pi \sin\pi (\vecx+\vecy ) \qquad \qquad \qquad \qquad \qquad \quad \,\, \forall (\vecx,\vecy)\in \OF.
\end{align*}
Hence elementary calculations reveal that $\big((\bu',\bsig'),p'\big)$ satisfies
\begin{align*}
  \bdiv \bsig'+6 \pi^2 \bu' &=\0 \qquad \qquad \qquad \qquad \qquad \quad \, \qin \OS,
\\
 \bsig' &= \cC \beps(\bu') \qquad \qquad \qquad \qquad \,\,\,\, \qin\OS,
\\
\dfrac{1}{\rS \mu^2} \bdiv_{\partial \OS}(\bn) \bG-\kappa (\nabla \bu) \bn &=\left\{\begin{aligned}
  & \quad a(\omega)\pi \begin{pmatrix}
\sin\pi \vecx  \\
\sin\pi \vecx
\end{pmatrix}
  \quad \quad \quad   \qon  \Gamma_1 \cup \Gamma_3,\\
   &-a(\omega)\pi \begin{pmatrix}
\sin\pi \vecy  \\
\sin\pi \vecy
\end{pmatrix}
  \quad \quad \quad \qon  \Gamma_2 \cup \Gamma_4,  
  \end{aligned}
   \right. 
\\
\Delta p' +2 \pi^2 p'&=0 \qquad 
\quad \qquad \quad   \quad \quad \qquad \,\, \,\,
\quad \qin\OF,
\\
 \dfrac{1}{\rS} (\bdiv_{\partial \OF}\bn) \bG \cdot \bn  - \mu^2 \kappa ((\nabla &\bu) \bn)\cdot \bn+\dfrac{1}{\rF} \Big[ \divv \big(\kappa \nabla p   \big)
-\dfrac{\partial}{\partial \bn} \big(\kappa \nabla p  \big) \cdot \bn-\bdiv_{\partial \OF} (\bn) \kappa \dfrac{\partial p}{\partial \bn} \notag\\
+\dfrac{\mu^2}{c^2} p  \kappa\Big]
&=
\left\{\begin{aligned}
           & a(\omega)\pi^2 \cos \pi \vecx -6\pi^3 a(\omega) \sin \pi \vecx \quad \, \,\,\hbox{on } \S^1,\\
           & a(\omega)\pi^2 \cos \pi \vecy -6\pi^3 a(\omega) \sin \pi \vecy \quad \, \,\,\,\hbox{on }  \S^2, \\ 
             & a(\omega)\pi^2 \cos \pi \vecx +6\pi^3 a(\omega) \sin \pi \vecx \quad \, \,\,\,\hbox{on }  \S^3,\\        
             & a(\omega)\pi^2 \cos \pi \vecy +6\pi^3 a(\omega) \sin \pi \vecy \quad \, \,\,\,\hbox{on }  \S^4.
\end{aligned}
   \right. 
\end{align*}
Therefore, computation of the shape derivative agrees with our result \eqref{eq.sh.der.D1} in Theorem \ref{thm.sh.der}.
We also note that $\bG$ defined in \eqref{def.G} is given by
\begin{align*}
\bG =2a(\omega)\pi^2 \begin{pmatrix}
\cos \pi(\vecx+\vecy) &-\sin\pi\vecx \sin \pi \vecy\\
\cos \pi(\vecx+\vecy) &-\sin\pi\vecx \sin \pi \vecy
\end{pmatrix}.
\end{align*}
\underline{{\bf Step II:} }  Let us consider another perturbation parameter $\kappa_1(\vecx,\omega) = b(\omega)$ which has a constant (but random) value over the whole $\partial \OF$. The random variable $b(\omega)$ possesses similar properties as $a(\omega).$
Hence, $\mE[\kappa_1]= 0$ and $\Covv[\kappa_1](\vecx,\vecy) \equiv \mE[b^2] = \frac{1}{3}.$ Again using \eqref{def.app.sh} and Lemma \ref{lem.cpt.k}, we now calculate
the shape Hessian of $\bu,\bsig$ and $p$ 
denoted, respectively, by $\u'',\bsig''$ and $p''$. 

One can clearly see that $\bdiv_{\partial \OF} \bn=0$ and $\bdiv_{\partial \OS}\bn=0.$ Hence on computing the right hand side of \eqref{eq.bc.u.h} and \eqref{eq.sd.bc2.h}, there is no contribution coming from the term $\mathbb{H}_1.$
We also observe that on $\S^1,$ using \eqref{def.Hi} we obtain
\begin{align*}
\mathbb{H}_2=a(\omega) b(\omega) \pi^2\begin{pmatrix}
\cos \pi\vecx  \\
0
\end{pmatrix},
\quad
\mathbb{H}_3=-12a(\omega)b(\omega) \pi^4 \cos \pi \vecx,
\quad
\mathbb{H}_4= -4a(\omega)b(\omega) \pi^3 \sin \pi \vecx.
\end{align*}
In a similar manner, one can compute $\mathbb{H}_i$ for $i=2,3,4$ on $\S^2 \cup \S^3 \cup \S^4.$
Therefore, by elementary calculations one can see that $\big((\bu'',\bsig''),p''\big)$  is given by 
\begin{align*}
\u''(\vecx,\vecy,\omega)&=2a(\omega)b(\omega)\pi^2\begin{pmatrix}
\cos\pi (\vecx+\vecy ) \\
\cos\pi (\vecx+\vecy)
\end{pmatrix}\qquad \qquad \qquad \qquad \qquad \qquad \forall (\vecx,\vecy)\in \OS, \notag\\ 
\bsig''(\vecx,\vecy,\omega)&=- \begin{pmatrix}
8a(\omega)b(\omega) \pi^3 \sin\pi (\vecx+\vecy ) & -4a(\omega)b(\omega) \pi^3\sin\pi (\vecx+\vecy) \\
 -4a(\omega)b(\omega)\pi^3 \sin\pi (\vecx+\vecy) & -8a(\omega)b(\omega)\pi^3 \sin\pi (\vecx+\vecy)
\end{pmatrix}\quad \forall (\vecx,\vecy)\in \OS, \notag\\ 
p''(\vecx,\vecy,\omega)&=-2a(\omega)b(\omega) \pi^2 \cos\pi (\vecx+\vecy ) \qquad \qquad \qquad \qquad \qquad \qquad \quad \forall (\vecx,\vecy)\in \OF,
\end{align*}
and satisfies 
\begin{align*}
  \bdiv \bsig''+6 \pi^2 \bu'' &=\0 \qquad \qquad \qquad \qquad \qquad \qquad
  \quad \quad \, \qin \OS,
\\
 \bsig'' &= \cC \beps(\bu'') \qquad \qquad \qquad \qquad \qquad \quad \,\,\, \qin\OS,
\\
\dfrac{1}{\mu^2}( \bdiv_{\partial \OS} \bn) \mathbb{H}_1+\dfrac{1}{\mu^2} \mathbb{H}_3 \bn&=\left\{\begin{aligned}
  &  2ab\pi^2 \begin{pmatrix}
\cos\pi \vecx  \\
\cos\pi \vecx
\end{pmatrix}
  \quad \quad \quad \quad \quad \quad \,\, \,\,\, \qon  \Gamma_1 \cup \Gamma_3,\\
   &2ab\pi^2 \begin{pmatrix}
\cos\pi \vecy  \\
\cos\pi \vecy
\end{pmatrix}
  \quad \quad \quad \quad \qquad\, \,\, \,\,\, \qon  \Gamma_2 \cup \Gamma_4,  
  \end{aligned}
   \right. 
\\
\Delta p'' +2 \pi^2 p''&=0 \qquad 
\quad \qquad \quad   \quad \qquad \qquad \, \,\,
\qquad \quad \qin\OF, 
\\
 -\dfrac{1}{\rS} (\bdiv_{\partial \OF}\bn) \mathbb{H}_1 \cdot \bn-\mathbb{H}_3& \bn \cdot \bn 
-\bdiv \mathbb{H}_2 +\bdiv_{\partial \OF}(\bn)\mathbb{H}_2 \cdot \bn 
+\mathbb{H}_4 \notag\\
&
=
\left\{\begin{aligned}
           & -2ab \pi^3\sin\pi \vecx+12ab \pi^4 \cos \pi \vecx \,\,\,\, \qon \S^1,\\
            & -2ab \pi^3\sin\pi \vecy+12ab \pi^4 \cos \pi \vecy \,\,\,\, \qon \S^2, \\         
             &  -2ab \pi^3\sin\pi \vecx-12ab \pi^4 \cos \pi \vecx \,\,\,\, \qon \S^3,\\
             & -2ab \pi^3\sin\pi \vecy-12ab \pi^4 \cos \pi \vecy \,\,\,\, \qon \S^4.
\end{aligned}
   \right. 
\end{align*}
Therefore, computation of the shape Hessian agrees with our result \eqref{eq.sh.hes.D1} in Theorem \ref{thm.sh.hes}.

\underline{{\bf Step III:} }  \quad
In this step, we choose $\kappa=\kappa_1.$
Using $\mE[a] = 0,$ equation \eqref{cov_rho1}, 
and 
$\mE \Big[\cos \pi(\vecx-\eps a(\omega))\cos\pi(\vecy-\eps a(\omega))\Big]=\cos \pi \vecx\,\cos \pi \vecy+O(\eps^2),$
one can derive
\begin{align}\label{taylor.p}
p^{\eps}(\vecx,\vecy,\omega)&=p(\vecx,\vecy)+\pi \eps a(\omega) \sin\pi(\vecx+\vecy)-\pi^2 a^2(\omega)\eps^2 \cos \pi(\vecx+\vecy)+a^3 O(\eps^3),
\end{align}
which verifies \eqref{stoc.tay.} for $p^{\eps}(\omega)$.
Proceeding in similar lines, it is easy to observe that $\bu^{\eps}(\omega)$ and $\bsig^{\eps}(\omega)$ admit the asymptotic shape Taylor expansion given by \eqref{stoc.tay.}. 
\newline
Again taking the expectation on \eqref{taylor.p} and using $\mE[a]=0,\mE[a^2]=1/3,$ rudimentary calculations reveal that $\mE[p^{\eps}(\vecx,\vecy,\cdot)]=p(\vecx,\vecy)+O(\eps^2)$ for $(\vecx,\vecy)\in \OF.$
In a similar way, one can verify 
\[\mE[\u^{\eps}(\vecx,\vecy,\cdot)]
=\bu(\vecx,\vecy)+O(\eps^2),\quad
\mE[\bsig^{\eps}(\vecx,\vecy,\cdot)]
=\bsig(\vecx,\vecy)+O(\eps^2), \quad (\vecx,\vecy)\in \OS.
\]
This verifies \eqref{meanvalue} in Theorem \ref{meanvalue.lem}.

Also exploiting $\mE[a] = 0,$ equation \eqref{cov_rho1}, and
$$\mE \Big[\cos \pi(\vecx-\eps a(\omega))\cos\pi(\vecy-\eps a(\omega))\Big]=\cos \pi \vecx\,\cos \pi \vecy -\dfrac{\pi^2 \eps^2}{3}\cos\pi(\vecx+\vecy)+a^3 O(\eps^3),$$
one can observe that for $(\vecx,\vecy) \in \OF$
\begin{align*}
\mE \big[p^{\eps}(\vecx,\vecy)\big]&=p(\vecx,\vecy)-\dfrac{\pi^2 \eps^2}{3}\cos\pi(\vecx+\vecy)+a^3 O(\eps^3)\\
\mE \big[p^{\eps}(\vecx,\vecy)\big]^2&=p^2(\vecx,\vecy)+\dfrac{\pi^2 \eps^2}{3}\sin^2 \pi(\vecx+\vecy)-\dfrac{\pi^2 \eps^2}{3}p(\vecx,\vecy)\cos \pi(\vecx+\vecy)+O(\eps^3).
\end{align*}
This implies,
$\var[p^{\eps}]= \mE \big[p^{\eps}\big]^2-\big[\mE (p^{\eps})\big]^2
=\dfrac{\pi^2 \eps^2}{3} \sin^2 \pi(\vecx+\vecy)+O(\eps^3).$
\newline
Since $p'(\vecx,\vecy)=\pi a (\omega)\sin \pi (\vecx+\vecy),$ one has
$\mE [p']^2=\dfrac{\pi^2}{3} \sin^2 \pi(\vecx+\vecy).$
Hence, we have $\var[p^{\eps}]=\eps^2\mE \Big[(p')^2\Big]+O(\eps^3).$
%
In a similar manner, one can check that $\var[\u^{\eps}]=\eps^2 \mE \Big[(\bu')^2\Big]+O(\eps^3).$
Therefore, approximation of the variance agrees with \eqref{var.main} in Theorem \ref{meanvalue.lem}. Thus we corroborate the theoretical results achieved in this paper on a square domain.





\begin{appendix}
\section{Appendix}
This section has been split into 3 parts. The very first subsection consists of basic tensor
algebra notations and integration by parts formula for tensor-valued functions. The second
subsection presents a series of lemmas which are involved in the analysis. 
The last subsection is devoted to the introduction of necessary concepts regarding shape
derivative and shape Hessian for $H^{\alpha}$ functions when $\alpha >0$. 

\subsection{Tensor algebra} \label{sec.tensor}
This section is based on Kronecker product and some of its properties; see \cite{HJ}.
\begin{defi}\label{def:LA}
Let~$A=(a_{ij})_{i,j=1,2}$ be a $2\times2$ matrix and~$\bsig=(\bsig_{ij})_{i,j=1,2}$ be 
a~$2\times2$-tensor-valued function. We define
\[
\cL_A\bsig := 
\left(
A^\top 
\begin{bmatrix}
\partial_1 \\ \partial_2
\end{bmatrix}
\right)^\top
\begin{bmatrix}
\bsig_{11} & \bsig_{12} \\ \bsig_{21} & \bsig_{22}
\end{bmatrix}
:=
\begin{bmatrix}
a_{11}\partial_1\bsig_{11}
+
a_{21}\partial_2\bsig_{11}
+
a_{12}\partial_1\bsig_{12}
+
a_{22}\partial_2\bsig_{12}
\\
a_{11}\partial_1\bsig_{21}
+
a_{21}\partial_2\bsig_{21}
+
a_{12}\partial_1\bsig_{22}
+
a_{22}\partial_2\bsig_{22}
\end{bmatrix}.
\]
We note that when~$A$ is the identity matrix, the operator~$\cL_{\bf I}$ is not the gradient of the
tensor~$\bsig$.
\end{defi}

\begin{defi} Let $A \in \R^{m \times n}$ and $B \in \R^{p \times q}$ be two matrices. The
Kronecker product of $A$ and $B,$ denoted by $A \otimes B,$ which is an element of $\R^{mp
\times nq}$ and is given by
\[
A \otimes B:=
  \begin{bmatrix}
    a_{11}B & \ldots & a_{1n}B \\
    a_{21}B & \dots & a_{2n}B\\
    \ldots & \ldots & \ldots\\
    a_{m1}B & \ldots & a_{mn}B
  \end{bmatrix}
\]
Note that $A \otimes B \neq B \otimes A.$
\end{defi}

The component-wise inner product of two matrices $A,B \in \R^{m \times n}$ is denoted by 
\[
A:B=\mbox{Tr}(A^\top B)
\]
 where Tr denotes the trace of the matrix. 
\begin{lem}
Let $A \in \R^{m \times n},\,B \in \R^{i \times j},\,C \in \R^{k \times l}.$ Then 
\begin{enumerate}
\item
$ A \otimes (B \otimes C)= (A \otimes B) \otimes C, $
\item
$A \otimes (B+C)=A \otimes B+ A \otimes C$ when $i=k$ and $j=l$. 
\end{enumerate}
\end{lem}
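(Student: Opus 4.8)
The plan is to prove both identities entrywise, reducing each to a corresponding arithmetic property of scalar multiplication in $\R$. First I would fix a convenient multi-index description of the Kronecker product that restates the block definition given above: for $A=(a_{rc})\in\R^{m\times n}$ and $B=(b_{st})\in\R^{p\times q}$, the entry of $A\otimes B$ located in row $(r-1)p+s$ and column $(c-1)q+t$ equals $a_{rc}\,b_{st}$, where $1\le r\le m$, $1\le c\le n$, $1\le s\le p$, $1\le t\le q$. This is nothing more than reading off the $(s,t)$ entry of the block $a_{rc}B$ sitting in block-row $r$ and block-column $c$, but it makes the bookkeeping for the triple product transparent.

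For the distributivity statement~(2), the argument is immediate from the block structure. When $B,C\in\R^{i\times j}$ (the case $i=k$, $j=l$), the $(r,c)$ block of $A\otimes(B+C)$ is $a_{rc}(B+C)=a_{rc}B+a_{rc}C$, which is exactly the sum of the corresponding blocks of $A\otimes B$ and $A\otimes C$. Since all three matrices share the same block partition into $i\times j$ blocks, adding block by block yields $A\otimes(B+C)=A\otimes B+A\otimes C$.

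For the associativity statement~(1), I would apply the index formula twice on each side and compare the resulting generic entries. On the left, $B\otimes C$ has entry $b_{st}\,c_{uv}$ in its row $(s-1)k+u$ and column $(t-1)l+v$, and tensoring with $A$ on the left multiplies by $a_{rc}$, placing $a_{rc}\,b_{st}\,c_{uv}$ in global row $(r-1)ik+(s-1)k+u$ and column $(c-1)jl+(t-1)l+v$. On the right, $A\otimes B$ has entry $a_{rc}\,b_{st}$ in row $(r-1)i+s$, and tensoring with $C$ on the right multiplies by $c_{uv}$, placing $a_{rc}\,b_{st}\,c_{uv}$ in global row $\bigl((r-1)i+s-1\bigr)k+u$ and the analogous column. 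Expanding $\bigl((r-1)i+s-1\bigr)k+u=(r-1)ik+(s-1)k+u$ shows the two positions coincide, and equality of the entries then follows from associativity of multiplication in $\R$.

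The only real subtlety, and the step I would be most careful about, is precisely this verification that the row and column positions of the generic entry agree on both sides: it is pure index arithmetic checking that grouping the outer product first versus the inner product first produces the same flattening of the triple index $(r,s,u)$, and likewise of $(c,t,v)$. I would therefore state the indexing convention explicitly at the outset and carry it consistently through both computations, so that the two nested offsets collapse to the same linear index; once that coincidence is confirmed, both parts of the lemma are established.
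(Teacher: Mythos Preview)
Your argument is correct: the entrywise indexing formula for the Kronecker product that you state is accurate, and your verification that the flattened row and column indices $(r-1)ik+(s-1)k+u$ and $(c-1)jl+(t-1)l+v$ agree on both sides of (1) is exactly the computation required; part (2) follows immediately from the block description as you say. The paper itself does not supply a proof of this lemma---it is recorded as a standard fact (the surrounding subsection cites Horn and Johnson for basic Kronecker-product properties)---so there is no alternative argument in the paper to compare with, and your entrywise proof is the natural one.
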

\begin{lem}\label{lem.int.by.parts} Let $D$ be a $C^2$ domain in $\R^d.$ Then for $\btau,\bsig \in (\mathrm{H}^2(D))^{d \times d},$ 
\begin{align*}
\int_D \bdiv \bsig \cdot \bdiv \btau\, dx &=-\int_{D} \bsig : \nabla (\bdiv \btau) \,dx
+\int_{\partial D} \bsig \bn \cdot \bdiv \btau\, d\mathcal{S}(\vecx)
\\
\int_{\partial D} \bdiv \bsig \cdot \bdiv \btau \, d \mathcal{S}
&=\int_{\partial D} \Big[-\bsig:\nabla (\bdiv \btau)+ \mathcal{F}(\bsig) \cdot \bdiv \btau  + (\bsig \bn) \cdot \dfrac{\partial}{\partial \bn}(\bdiv \btau)\Big]d \mathcal{S} \notag\\
&\quad+\int_{\partial D}  \bdiv_{\partial D} (\bn) (\bsig \bn) \cdot (\bdiv \btau)\,d \mathcal{S},
\end{align*}
where $\bn$ is the unit outward normal vector to the boundary $\partial D$
and 
\begin{align}
\bf D \sigma&:= \begin{bmatrix}
\bf D \sigma^{(1)} 
&
\ldots
& 
\bf D \sigma^{(N)} 
\end{bmatrix}^{\top}\label{def:DF1}\\
\mathcal{F}(\bsig)&: = \begin{bmatrix}
\langle \bf D \sigma^{(1)}\bn,\bn \rangle_{\R^N} 
&
\ldots
& 
\langle \bf D \sigma^{(N)} \bn,\bn \rangle_{\R^N} 
\end{bmatrix}^{\top}\label{def:DF2}
\end{align}
\end{lem}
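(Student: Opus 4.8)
The plan is to prove the two identities separately: the first by a direct row-wise application of the divergence theorem, and the second by surface (tangential) calculus on the closed manifold $\partial D$. Throughout I would first assume $\bsig,\btau$ smooth up to the boundary and recover the $\mathrm{H}^2$ statement at the end by density, since for $\bsig,\btau\in\mathrm{H}^2(D)^{\dxd}$ the rows $\sigma^{(i)}$ have traces in $\mathrm{H}^{3/2}(\partial D)$ and $\bdiv\btau\in\mathrm{H}^1(D)$ has a well-defined trace, so that all the terms appearing are meaningful once the surface differential operators are interpreted in the limiting sense.

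For the first formula I would set $\bw:=\bdiv\btau$ and use the conventions $(\nabla\bw)_{ij}=\partial_j w_i$ and $(\bdiv\bsig)_i=\sum_j\partial_j\sigma_{ij}$. Writing $\int_D\bdiv\bsig\cdot\bw = \int_D\sum_i\big(\sum_j\partial_j\sigma_{ij}\big)w_i$ and integrating by parts in $x_j$ produces $-\int_D\sum_{i,j}\sigma_{ij}\partial_j w_i + \int_{\partial D}\sum_{i,j}\sigma_{ij}n_j w_i$. The first term is exactly $-\int_D\bsig:\nabla\bw$ and the boundary term is $\int_{\partial D}(\bsig\bn)\cdot\bw$, which is the claimed identity; this step needs only $\bsig\in\mathrm{H}^1(D)^{\dxd}$ and $\bw\in\mathrm{H}^1(D)^d$.

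For the second, more delicate, identity I again write $\bw=\bdiv\btau$ and treat $\bdiv\bsig$ row by row, noting that $(\bdiv\bsig)_i=\divv\sigma^{(i)}$ is the ordinary divergence of the $i$-th row $\sigma^{(i)}$. On $\partial D$ I would split the full divergence into tangential and normal parts,
\[
\divv\sigma^{(i)}=\bdiv_{\partial D}\sigma^{(i)}+\frac{\partial\sigma^{(i)}}{\partial\bn}\cdot\bn,
\]
so the normal part contributes $\sum_i\big(\tfrac{\partial\sigma^{(i)}}{\partial\bn}\cdot\bn\big)w_i=\mathcal{F}(\bsig)\cdot\bw$, using \eqref{def:DF1}--\eqref{def:DF2} and the identity $\langle {\bf D}\sigma^{(i)}\bn,\bn\rangle=\tfrac{\partial\sigma^{(i)}}{\partial\bn}\cdot\bn$. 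To the tangential part I would apply the tangential Green's formula on the boundaryless $C^2$ surface $\partial D$ (the surface divergence theorem applied to $w_i\,\sigma^{(i)}$, which yields a mean-curvature correction), giving $\int_{\partial D}w_i\,\bdiv_{\partial D}\sigma^{(i)} = -\int_{\partial D}\sigma^{(i)}\cdot\nabla_{\partial D}w_i + \int_{\partial D}\bdiv_{\partial D}(\bn)(\sigma^{(i)}\cdot\bn)w_i$.

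Finally I would insert the orthogonal decomposition $\nabla_{\partial D}w_i=\nabla w_i-\tfrac{\partial w_i}{\partial\bn}\bn$, which turns $-\sum_i\sigma^{(i)}\cdot\nabla_{\partial D}w_i$ into $-\bsig:\nabla\bw+(\bsig\bn)\cdot\tfrac{\partial\bw}{\partial\bn}$, while $\sum_i\bdiv_{\partial D}(\bn)(\sigma^{(i)}\cdot\bn)w_i=\bdiv_{\partial D}(\bn)(\bsig\bn)\cdot\bw$; collecting the four contributions with $\bw=\bdiv\btau$ gives the stated formula. The main obstacle is precisely this surface-calculus step: one must justify the tangential Green's formula in the appropriate regularity class and correctly account for the curvature term $\bdiv_{\partial D}(\bn)$, which is where the $C^2$ hypothesis on $\partial D$ enters. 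Once that tool is in place, the algebra of splitting the full derivatives into tangential-plus-normal components is entirely routine.
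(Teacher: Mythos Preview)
Your proposal is correct and follows essentially the same approach as the paper: the first identity is the row-wise divergence theorem, and the second is obtained via the tangential Green's formula on $\partial D$, with the normal-direction contribution producing $\mathcal{F}(\bsig)\cdot\bdiv\btau$ and the curvature term $\bdiv_{\partial D}(\bn)$ appearing from the surface divergence theorem. The only cosmetic difference is that you organize the surface computation geometrically (split $\divv\sigma^{(i)}$ into tangential plus normal parts, then apply the surface divergence theorem to $w_i\sigma^{(i)}$), whereas the paper carries out the same manipulation directly in index notation; the underlying tool and the resulting terms are identical.
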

\begin{proof}
For any $\btau,\bsig \in (\mathrm{H}^2(D))^{d \times d}$ we see that
\begin{align}
&\intg \bdiv \bsig \cdot \bdiv \btau \, d \mathcal{S} =\sum_k \intg (\bdiv \bsig)_k (\bdiv \btau)_k \,d \mathcal{S} =\sum_{i,j,k} \intg \dfrac{\partial \sigma_{jk}}{\partial x_j}
\dfrac{\partial \tau_{ik}}{\partial x_i}\,d \mathcal{S}\notag\\
&=-\underbrace{\sum_{i,j,k} \intg \sigma_{ij}
\dfrac{\partial^2 \tau_{ij}}{\partial x_i \partial x_k} \,d \mathcal{S}}_{K_1:=}
+\underbrace{ \sum_{i,j,k} \intg \dfrac{\partial}{\partial \bn} \big(\sigma_{ij} 
\dfrac{\partial \tau_{ij}}{\partial x_k} \big)n_i  \,d \mathcal{S}}_{K_2:=} +\underbrace{\sum_{i,j,k} \intg \kappa \sigma_{ij}
\dfrac{\partial \tau_{ki}}{\partial x_k}n_i \,d \mathcal{S}}_{K_3:=}.
\end{align}
To evaluate $K_1$, using $\big(\nabla(\bdiv \btau)\big)_{ki}=\sum_{j}\dfrac{\partial^2
\tau_{kj}}{\partial x_i \partial x_j}$, we obtain
\[
\bsig: \nabla(\bdiv \btau)
=\sum_{i,j} \sigma_{ij}
\dfrac{\partial (\bdiv \tau)_{i}}{\partial x_j}=\sum_{i,j} \sigma_{ij} \dfrac{\partial}{\partial x_j} \Big(\sum_k 
\dfrac{\partial \tau_{ik}}{\partial x_k}  \Big)
=\sum_{i,j,k} \sigma_{ij} \dfrac{\partial^2 \tau_{ik}}{\partial x_j \partial x_k}.
\]
Again using integration by parts formula, we have
\[
K_2=\underbrace{\sum_{i,j,k} \intg \dfrac{\partial \sigma_{ij}}{\partial \bn} \dfrac{\partial
\btau_{kj}}{\partial x_k} n_i \,d \mathcal{S}}_{K_{21}:=} + 
\underbrace{\intg \sum_{i,j,k} \sigma_{ij} \dfrac{\partial^2 \btau_{kj}}{\partial \bn \partial
x_k} n_i \,d \mathcal{S}}_{K_{22}:=},
\]
which on further simplification yields 
\begin{align*}
K_{21}&=\sum_{i=1}^k \intg \dfrac{\partial \sigma_{ki}}{\partial \bn}n_i \mbox{div}(\btau^{(k)}) \,d \mathcal{S}
=\sum_k \intg \sum_{i,l} \dfrac{\partial \sigma_{ki}}{\partial x_l}\,n_l \, n_i\, \mbox{div}(\btau^{(k)})\,d \mathcal{S}
=\intg \langle \bf D \sigma^{(k)}\bn,\bn \rangle_{\R^N} \,d \mathcal{S},
\notag\\
K_{22}&= \sum_{i,j,k} \intg \sigma_{ki} \dfrac{\partial^2 \tau_{kj} }{\partial \bn \partial x_j} n_i
=\sum_{i,k} \intg \sigma_{ki} n_i \dfrac{\partial}{\partial \bn} (\bdiv \tau^{k} ) =\intg (\bsig \bn) \cdot \dfrac{\partial}{\partial \bn} (\bdiv \btau),\notag\\
K_3&=\sum_{k} \intg \kappa (\bsig^{k} \cdot \bn) \bdiv (\tau^{(k)})=\intg \kappa \big(\bsig \bn  \big)\cdot \bdiv \btau \,d \mathcal{S},
\end{align*}
where $\bf D \sigma$ and $\mathcal{F}(\bsig)$ are given by \eqref{def:DF1} and \eqref{def:DF2} respectively.
Combining all these we have the required result. 
\end{proof}

\subsection{Technical Lemmas}\label{sec.app.shape} 
If $V$ is given by \eqref{equ:V def.main}, until the end of this subsection let us assume that $\bT^\eps$ is defined by~\eqref{equ:T eps define} and~\eqref{equ:T e cond} with 
$\tilde\kappa\in C^{1}(\R^3)$, 
and denote its Jacobian matrix and Jacobian determinant by
$J_{\bT^\eps}$ and $\gamma(\eps,\cdot)$, respectively.
The following result is straightforward. 
\begin{lem}\label{lem:V prop}
{Assuming} $\tilde\kappa\in W^{1,\infty}(\R^3)$ and
$\tilde\kappa(\vecx) = 0$ for ${\vecx \in B_R^c}$, 
there hold $\bV\in 
\big(H^1(\R^3)\big)^3$ 
and
\begin{equation*}
\frac{\partial^m \bV(\vecx)}{\partial x_l^m} = \veczero
\quad
\forall \vecx\in {B_{R}^c},
\quad
l = 1,2,3,
\quad
m=0,1.
\end{equation*}
 \end{lem}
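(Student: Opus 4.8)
The plan is to treat the two assertions separately; both reduce to elementary consequences of the product structure $\bV=\tilde\kappa\,\tilde\n$ of~\eqref{equ:V def.main}, combined with the hypothesis that $\tilde\kappa$ vanishes on $B_R^c$. First I would dispose of the vanishing statement, i.e. the displayed identity for $m=0,1$. For $m=0$, since $\bV(\vecx)=\tilde\kappa(\vecx)\,\tilde\n(\vecx)$ and $\tilde\kappa(\vecx)=0$ for every $\vecx\in B_R^c$, we get $\bV(\vecx)=\veczero$ on $B_R^c$ at once. For $m=1$, the point is that $B_R^c$ is \emph{open} and $\tilde\kappa\equiv0$ on it, so all first-order partial derivatives of $\tilde\kappa$ vanish there as well. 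Differentiating the product componentwise gives $\partial_l\bV=(\partial_l\tilde\kappa)\,\tilde\n+\tilde\kappa\,(\partial_l\tilde\n)$, and on $B_R^c$ both summands vanish (because $\partial_l\tilde\kappa=0$ and $\tilde\kappa=0$ there), yielding $\partial_l\bV=\veczero$ on $B_R^c$ for $l=1,2,3$.

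Next I would verify the membership $\bV\in\big(H^1(\R^3)\big)^3$. The relevant facts are that $\tilde\kappa\in W^{1,\infty}(\R^3)$ has support contained in the compact set $\overline{B_R}$, while the extended normal $\tilde\n$, being a smoothness-preserving extension of the $C^1$ normal field of the $C^2$ boundary $\partial\OS$, is bounded together with its first derivatives on $\overline{B_R}$. Consequently $\abs{\bV}\le\|\tilde\kappa\|_{L^\infty}\,\|\tilde\n\|_{L^\infty(B_R)}$ on $B_R$ and $\bV=\veczero$ elsewhere, so $\bV$ is bounded with support in $\overline{B_R}$; since $\overline{B_R}$ has finite Lebesgue measure, this forces $\bV\in L^2$. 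The same reasoning applies to each entry $\partial_l\bV=(\partial_l\tilde\kappa)\,\tilde\n+\tilde\kappa\,(\partial_l\tilde\n)$: both summands are $L^\infty$ functions supported in $\overline{B_R}$, hence square integrable. Thus $\bV$ and all its first-order weak derivatives lie in $L^2$, giving $\bV\in\big(H^1(\R^3)\big)^3$.

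There is no genuine obstacle in this lemma, which is why it is stated to be straightforward; the only point requiring a moment of care is the $H^1$-membership, where one uses both that $\tilde\kappa$ is Lipschitz (so that its weak gradient is an honest bounded function) and that multiplication by the bounded, Lipschitz factor $\tilde\n$ preserves the $W^{1,\infty}$ structure on $B_R$, after which finiteness of $\abs{B_R}$ upgrades ``bounded with compact support'' to ``$L^2$''. It is precisely the vanishing of $\tilde\kappa$ off $B_R$ that lets us ignore the behaviour of $\tilde\n$ outside the ball.
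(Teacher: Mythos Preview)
Your proof is correct. The paper does not actually provide a proof of this lemma; it simply declares the result ``straightforward'' and moves on. Your argument supplies precisely the elementary details one would expect: the vanishing on $B_R^c$ follows from the product structure $\bV=\tilde\kappa\,\tilde\n$ and the fact that $\tilde\kappa$ (and hence its derivatives, since $B_R^c$ is open) vanish there, while the $H^1$-membership comes from boundedness of both factors and their first derivatives together with compact support in $\overline{B_R}$. There is nothing to compare against, and your write-up is a faithful unpacking of what the paper leaves implicit.
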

\dela{
{Recall the definition \eqref{equ:W def} of the weighted 
space $W_\eps$ associated to the splitting $\R^3 = \overline{D_-^\eps} \cup \overline{D_+^\eps}$.}
It can be proved that a function $v$ belongs to 
{$W_\eps$} if and only if the composition $v\circ \bT^\eps$ belongs to 
{$W_0$}, and there hold
\begin{equation}\label{equ:W Weps}
\begin{aligned}
\norm{(v^\eps)_-}{H^1(D_-^\eps)}
&
\simeq
\norm{(v^\eps\circ \bT^\eps)_-}{H^1(D_-^0)}
\\
\norm{(v^\eps)_+}{H_{w}^1(D_+^\eps)}
&
\simeq
\norm{(v^\eps\circ \bT^\eps)_+}{H_{w}^1(D_+^0)}
\\
\norm{v^\eps}{{W_\eps}}
&
\simeq
\norm{v^\eps\circ \bT^\eps}{W_0}.
\end{aligned}
\end{equation}}
We denote $\bV(\vecx):= 
(\bV_1(\vecx), \bV_2(\vecx), \bV_3(\vecx))^\top$. For the explicit forms of $J_{\bT^\eps}(\cdot)$ and $\gamma(\eps,\cdot)$, we refer to \cite{CPT}. Exploiting Lemma~\ref{lem:V prop}, we derive that for sufficiently small $\eps>0$, there holds
\begin{equation}\label{equ:vbn 4}
\gamma(\eps,\vecx)
=
1 + \eps \underbrace{\gamma_1(\vecx) + \eps^2\gamma_2(\vecx) + \eps^3\gamma_3(\vecx)
}_{\tilde{\gamma}(\eps,\vecy):=}
\geq c > 0
\qquad
\forall \vecx\in \R^3.
\end{equation}
\begin{lem}\label{lem.conv.sig} For any $\bsig \in \mathbb{X,}$
\begin{align}
&1. \,\, \|J_{\bT^\eps}^{-\top}-\mathbf{I} \|_{L^{\infty}(\R^3)} \leq C \eps,\label{eq.conv.sig1}\\
&2. \,\, \|\cL_{J_{\bT^\eps}^{-1}}
\bsig- \cL_{\bf I} \bsig \|_{{L}^2(\R^3)} \leq C \eps\label{eq.conv.sig2}.
\end{align}
\end{lem}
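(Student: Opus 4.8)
The plan is to base both estimates on the affine structure of the perturbation map. Since $\bT^\eps(\vecx)=\vecx+\eps\mathbf{V}(\vecx)$ by \eqref{equ:T eps define} and \eqref{equ:V def.main}, its Jacobian is $J_{\bT^\eps}=\mathbf{I}+\eps\,\mathrm{D}\mathbf{V}$, where $\mathrm{D}\mathbf{V}$ denotes the matrix-valued spatial gradient of $\mathbf{V}=\tilde\kappa\tilde\n$. By Lemma~\ref{lem:V prop} the entries of $\mathrm{D}\mathbf{V}$ belong to $L^\infty(\R^3)$ and vanish outside $B_R$; write $\|\mathrm{D}\mathbf{V}\|_{L^\infty(\R^3)}\le M$. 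Throughout we restrict to $\eps$ small enough that $\eps M<1/2$, which makes $J_{\bT^\eps}$ uniformly invertible.

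To prove \eqref{eq.conv.sig1} I would expand the inverse in a Neumann series. From $J_{\bT^\eps}=\mathbf{I}+\eps\,\mathrm{D}\mathbf{V}$ we obtain $J_{\bT^\eps}^{-1}=\mathbf{I}-\eps\,\mathrm{D}\mathbf{V}+\sum_{m\ge2}(-\eps)^m(\mathrm{D}\mathbf{V})^m$, so that, taking transposes,
\[
\big\|J_{\bT^\eps}^{-\top}-\mathbf{I}\big\|_{L^\infty(\R^3)}
\le \eps\,M+\sum_{m\ge2}(\eps M)^m
\le \frac{\eps M}{1-\eps M}\le C\eps .
\]
This settles part~1; the explicit forms of $J_{\bT^\eps}$ and $\gamma(\eps,\cdot)$ recalled from \cite{CPT} together with the expansion \eqref{equ:vbn 4} may be used in place of the series if one prefers to read off the first-order term directly.

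For \eqref{eq.conv.sig2} the key observation is that the map $A\mapsto\cL_A\bsig$ is linear in the matrix $A$ (see Definition~\ref{def:LA}), whence
\[
\cL_{J_{\bT^\eps}^{-1}}\bsig-\cL_{\mathbf{I}}\bsig=\cL_{J_{\bT^\eps}^{-1}-\mathbf{I}}\,\bsig .
\]
Setting $B:=J_{\bT^\eps}^{-1}-\mathbf{I}$, the explicit formula in Definition~\ref{def:LA} shows that each component of $\cL_B\bsig$ is a sum of products of an entry of $B$ with a first partial derivative of a component of $\bsig$; hence pointwise $|\cL_B\bsig|\le C\,\|B\|_{L^\infty}\,|\nabla\bsig|$, where $|\nabla\bsig|^2=\sum_{i,j,k}|\partial_k\sigma_{ij}|^2$. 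Since $B$ is supported in $B_R$ and $\|B\|_{L^\infty(\R^3)}\le C\eps$ by part~1 (transposition does not change the sup-norm of the entries), integrating over $B_R$ gives
\[
\big\|\cL_{J_{\bT^\eps}^{-1}}\bsig-\cL_{\mathbf{I}}\bsig\big\|_{L^2(\R^3)}
\le C\eps\,\|\nabla\bsig\|_{L^2(B_R)} .
\]

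The delicate point — and the step I expect to be the main obstacle — is that a generic $\bsig\in\mathbb{X}$ lies only in $\H(\bdiv;\OS)$, so that only the combination $\bdiv\bsig=\cL_{\mathbf{I}}\bsig$, and not all first derivatives making up $\nabla\bsig$, is a priori controlled in $L^2$. To close the argument I would therefore first establish the estimate for $\bsig$ in a dense smooth subspace, for which $\nabla\bsig\in L^2(B_R)$, and then pass to the limit; alternatively, one can exploit the change-of-variables identity $\cL_{J_{\bT^\eps}^{-1}}(\bhsig\circ\bT^\eps)=(\bdiv\bhsig)\circ\bT^\eps$, valid for $\bhsig\in\H(\bdiv)$ on the perturbed domain, which realises $\cL_{J_{\bT^\eps}^{-1}}\bsig$ as the pullback of a genuine $L^2$ divergence and thereby keeps every quantity in $L^2$ without additional regularity. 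Either route reduces \eqref{eq.conv.sig2} to the $O(\eps)$ bound on $B$ obtained in part~1.
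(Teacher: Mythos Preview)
Your argument follows the same route as the paper: expand $J_{\bT^\eps}^{-1}$ about $\mathbf I$ and use the linearity of $A\mapsto\cL_A\bsig$. The only cosmetic difference is that the paper computes the inverse via the explicit cofactor (adjugate) formula in $\R^3$, writing $J_{\bT^\eps}^{-1}=\mathbf I+\eps\,\hat{\bV}_1(\cdot,\eps)$ with $\hat{\bV}_1$ built from first derivatives of $\mathbf V$ and the coefficients $\gamma_i$ of~\eqref{equ:vbn 4}, rather than via a Neumann series; both give the same $O(\eps)$ remainder and hence \eqref{eq.conv.sig1}.

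For \eqref{eq.conv.sig2} the paper simply records $\cL_{J_{\bT^\eps}^{-1}}\bsig=\cL_{\mathbf I}\bsig+\eps\,\cL_{\hat{\bV}_1}\bsig$ and declares the proof complete; it does \emph{not} discuss the point you flagged, namely that $\cL_{\hat{\bV}_1}\bsig$ involves all first partial derivatives of the entries of $\bsig$, which are not individually controlled for $\bsig$ merely in $\H(\bdiv;\OS)$. Your observation is therefore sharper than the paper's treatment. In the paper's applications the lemma is only invoked with $\bsig=\bsig^\eps\circ\bT^\eps$, for which $\cL_{J_{\bT^\eps}^{-1}}(\bsig^\eps\circ\bT^\eps)=(\bdiv\bsig^\eps)\circ\bT^\eps\in L^2$ by the change-of-variables identity you mention, so the estimate is used in the regime where it is unambiguously valid; your second suggested route is exactly how the gap is implicitly closed in practice.
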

\begin{proof}
Recalling the form of $J_{\bT^\eps}$ in \cite{CPT}, we see that
\begin{equation*}
\mbox{Adj}\,J_{\bT^\eps}(\vecx)
=\Big(\mathbf{I}+\eps \tilde{\bV}_1(\vecx)+\eps^2\tilde{\bV}_2(\vecx)\Big)^{\top}
\end{equation*}
where
\begin{equation*}
\tilde{\bV}_1=
\begin{bmatrix}
 \bV_{2,2}+ \bV_{3,3}
&
-\bV_{1,2}
&
-\bV_{1,3} \\
-\bV_{2,1}
&
\bV_{1,1}+\bV_{3,3}
& 
-\bV_{2,3}
\\
-\bV_{3,1}
&
-\bV_{3,2}
&
\bV_{1,1}+\bV_{2,2}
\end{bmatrix}
\end{equation*}
and 
\begin{equation*}
\tilde{\bV}_2=
\begin{bmatrix}
 \bV_{2,2} \bV_{3,3}-\bV_{2,3}\bV_{3,2}
&&
\bV_{1,3}\bV_{3,2}
-\bV_{1,2}\bV_{3,3}
&&
\bV_{1,2}\bV_{2,3}-\bV_{1,3}\bV_{1,2}
 \\
\bV_{2,3} \bV_{3,1}-\bV_{2,1}\bV_{3,3}
&&
\bV_{1,1} \bV_{3,3}-\bV_{1,3}\bV_{3,1}
& &
\bV_{1,3}\bV_{2,1}-\bV_{1,1}\bV_{2,3}
\\
\bV_{2,1}\bV_{3,2}-\bV_{2,2}\bV_{3,1}
&&
\bV_{1,2} \bV_{3,1}-\bV_{1,1}\bV_{3,2}
&&
\bV_{1,1}\bV_{2,2}-\bV_{1,2}\bV_{2,2}
\end{bmatrix}
\end{equation*}
where $\bV_{i,j}=\dfrac{\partial \bV_i}{\partial x_j}.$
This implies
\begin{align*}
J_{\bT^\eps}^{-1}(\vecx)=\dfrac{1}{\gamma(\eps,\vecx)} \mbox{Adj}\,J_{\bT^\eps}(\vecx)=\dfrac{\mathbf{I}+\eps \tilde{\bV}_1(\vecx)+\eps^2\tilde{\bV}_2(\vecx)}{1+\eps \gamma_1(\vecx)+\eps^2\gamma_2(\vecx)+\eps^3 \gamma_3(\vecx)}=\bf I+\eps \hat{\bV}_1(\vecx,\eps)
\end{align*}
where $\hat{\bV}_1(\vecx,\eps)=\tilde{\bV}_1(\vecx)-\gamma_1 (\vecx)+O(\eps).$
This concludes \eqref{eq.conv.sig1}. Furthermore, we have
\begin{align*}
\cL_{J_{\bT^\eps}^{-1}}
\bsig
 =  
\cL_{\bf I}\bsig + \eps \cL_{\hat{\bV}_1}\bsig
\end{align*}
and this concludes \eqref{eq.conv.sig2}. 
This completes the proof.
\end{proof}
In view of Lemma 3.2 of \cite{CPT}, let us
consider $\mathcal{A}(\eps,\cdot):= \gamma(\eps,\cdot) J_{\bT^\eps}^{-1} J_{\bT^\eps}^{-\top}.$ Furthermore,
we denote $\mathcal{A}'(0,\cdot)$ is the G\^ateaux derivative of $\mathcal{A}(\eps,\cdot)$ 
at $\eps =0$, namely
\[
\mathcal{A}'(0,\vecx)
=
\lim_{\eps\goto 0}
\frac{\mathcal{A}(\eps,\vecx) - \mathbf{I}(\vecx)}{\eps},
\quad\vecx\in \R^3.
\]

\begin{lem} \label{lem.conv.tensor}
Define operators $\tilde{\mathcal{A}}: \mathbb{X}\rightarrow L^2(\OS)$ and $\tilde{J}: \mathbb{X}\rightarrow L^2(\OS)$ by  
\begin{align}
\tilde{\mathcal{A}}(\eps,\vecy) \bsig(\vecy)&:=\gamma(\eps,\vecy)J_{\bT^{\eps}}^{-1} \otimes 
\cL_{J_{\bT^{\eps}}^{-1}}
 \bsig(\vecy),\label{def.ta}\\
\tilde{J}(\vecy)\bsig(\vecy)&:= \mathbf{I} \otimes 
\cL_{\bf I} \bsig(\vecy)
 \label{def.ti},
 \quad \forall \bsig(\vecy) \in \mathbb{X}.
\end{align}
Then \begin{align} \label{equ:st 1.ta}
\lim_{\eps \goto 0} \|\tilde{\mathcal{A}}(\eps,\cdot)\bsig-\tilde{J}(\cdot)\bsig \|_{L^2(\R^3)} =0, \end{align}
\begin{equation}\label{equ:st 2.ta}
\lim_{\eps\goto 0} 
\norm{\dfrac{\tilde{\mathcal{A}}(\eps,\cdot)\bsig - \tilde{{J}}(\cdot)\bsig}{\eps} - \mathcal{A}'(0,\cdot)\bsig}{L^2(\OS)} = 0,
\end{equation}
where we denote
$\tilde{\mathcal{A}}'(0,\cdot)$ as the G\^ateaux derivative of $\tilde{\mathcal{A}}(\eps,\cdot)$ 
at $\eps =0$, namely
\[
\tilde{\mathcal{A}}'(0,\vecy)\bsig(\vecy)
=
\lim_{\eps\goto 0}
\frac{\tilde{\mathcal{A}}(\eps,\vecy)\bsig(\vecy) - \tilde{J}(\vecy) \bsig(\vecy)}{\eps},
\quad\vecy\in \OS.
\]
\end{lem}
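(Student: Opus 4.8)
The plan is to treat the two factors of the Kronecker product in \eqref{def.ta} separately, exploiting that the matrix factor $\gamma(\eps,\cdot)J_{\bT^\eps}^{-1}$ converges to $\mathbf{I}$ in $L^\infty(\R^3)$ while the tensor factor $\cL_{J_{\bT^\eps}^{-1}}\bsig$ converges to $\cL_{\bf I}\bsig$ in $L^2(\OS)$, so that the bilinearity of $\otimes$ together with the elementary bound $\norm{M\otimes G}{L^2}\le\norm{M}{L^\infty}\norm{G}{L^2}$ keeps every product under control. The two quantitative inputs are exactly Lemma~\ref{lem.conv.sig}: the $L^\infty$ estimate \eqref{eq.conv.sig1} and the $L^2$ estimate \eqref{eq.conv.sig2}, supplemented by the expansion \eqref{equ:vbn 4} of the Jacobian determinant. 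First I would record, from the proof of Lemma~\ref{lem.conv.sig}, the exact polynomial identity $\gamma(\eps,\cdot)J_{\bT^\eps}^{-1}=\mathrm{Adj}\,J_{\bT^\eps}=(\mathbf{I}+\eps\tilde{\bV}_1+\eps^2\tilde{\bV}_2)^\top$, whose entries are bounded in $L^\infty$ by the regularity of $\bV$, and the first-order identity $\cL_{J_{\bT^\eps}^{-1}}\bsig=\cL_{\bf I}\bsig+\eps\,\cL_{\hat{\bV}_1(\cdot,\eps)}\bsig$.

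For \eqref{equ:st 1.ta} I would set $A_\eps:=\gamma(\eps,\cdot)J_{\bT^\eps}^{-1}-\mathbf{I}$ and $B_\eps:=\cL_{J_{\bT^\eps}^{-1}}\bsig-\cL_{\bf I}\bsig$ and expand, by bilinearity,
\[
\tilde{\mathcal{A}}(\eps,\cdot)\bsig-\tilde{J}(\cdot)\bsig
=A_\eps\otimes\cL_{\bf I}\bsig+\mathbf{I}\otimes B_\eps+A_\eps\otimes B_\eps .
\]
Since $\cL_{\bf I}\bsig=\bdiv\bsig\in L^2(\OS)$ for $\bsig\in\mathbb{X}$, the three summands are bounded in $L^2(\OS)$ by $\norm{A_\eps}{L^\infty}\norm{\bdiv\bsig}{L^2}$, $\norm{B_\eps}{L^2}$ and $\norm{A_\eps}{L^\infty}\norm{B_\eps}{L^2}$, each of order $\eps$ by \eqref{eq.conv.sig1}--\eqref{eq.conv.sig2}. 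Letting $\eps\to0$ gives \eqref{equ:st 1.ta}.

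For \eqref{equ:st 2.ta} I would divide the same expansion by $\eps$ and use the exact $O(\eps^2)$ form $A_\eps=\eps\,\tilde{\bV}_1^\top+\eps^2\tilde{\bV}_2^\top$ together with $B_\eps=\eps\,\cL_{\hat{\bV}_1(\cdot,\eps)}\bsig$ to separate the first-order coefficient:
\[
\frac{\tilde{\mathcal{A}}(\eps,\cdot)\bsig-\tilde{J}(\cdot)\bsig}{\eps}
=\tilde{\bV}_1^\top\otimes\cL_{\bf I}\bsig+\mathbf{I}\otimes\cL_{\hat{\bV}_1(\cdot,\eps)}\bsig+\eps\,R_\eps ,
\]
where $R_\eps=\tilde{\bV}_2^\top\otimes\cL_{\bf I}\bsig+\tilde{\bV}_1^\top\otimes\cL_{\hat{\bV}_1(\cdot,\eps)}\bsig+\eps\,\tilde{\bV}_2^\top\otimes\cL_{\hat{\bV}_1(\cdot,\eps)}\bsig$. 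The first term is the anticipated $\eps$-independent limit, so \eqref{equ:st 2.ta} reduces to showing $\cL_{\hat{\bV}_1(\cdot,\eps)}\bsig\to\cL_{\hat{\bV}_1(\cdot,0)}\bsig$ in $L^2(\OS)$ and $\norm{R_\eps}{L^2}\le C$ uniformly in small $\eps$, after which the G\^ateaux derivative is identified as $\tilde{\mathcal{A}}'(0,\cdot)\bsig=\tilde{\bV}_1^\top\otimes\cL_{\bf I}\bsig+\mathbf{I}\otimes\cL_{\hat{\bV}_1(\cdot,0)}\bsig$.

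The main obstacle is precisely this strong $L^2$-convergence, i.e.\ obtaining the G\^ateaux derivative as an $L^2$ limit rather than a mere pointwise one. The difficulty is structural: $\cL$ acts through the \emph{full} first-order derivatives of $\bsig$, whereas membership in $\mathbb{X}$ controls only $\bdiv\bsig$, so the formal coefficient $\cL_{\tilde{\bV}_1^\top}\bsig$ cannot be bounded by $\norm{\bsig}{\H(\mathbf{div};\OS)}$ alone. I would therefore never split the twisted divergence, working instead with the whole quantity $\eps\,\cL_{\hat{\bV}_1(\cdot,\eps)}\bsig=\cL_{J_{\bT^\eps}^{-1}}\bsig-\bdiv\bsig$, which through the change of variables $\vecx=\bT^\eps(\vecy)$ equals $(\bdiv_{\vecx}\bsig^\eps)\circ\bT^\eps-\bdiv\bsig$ with $\bsig^\eps:=\bsig\circ(\bT^\eps)^{-1}\in\mathbb{X}^\eps$, hence genuinely lies in $L^2(\OS)$ with norm $O(\eps)$ by \eqref{eq.conv.sig2}. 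Dividing by $\eps$ keeps $\{\cL_{\hat{\bV}_1(\cdot,\eps)}\bsig\}$ bounded in $L^2(\OS)$, and its convergence I would obtain by establishing \eqref{equ:st 2.ta} first for smooth $\bsig$, where the expansion is literal and all terms are classical, and then passing to the general case through the uniform estimate \eqref{eq.conv.sig2} and the $L^\infty$-continuity in $\eps$ of $\hat{\bV}_1(\cdot,\eps)$ supplied by the geometric expansion of $1/\gamma(\eps,\cdot)$ in \eqref{equ:vbn 4}. Combined with the uniform bound $\norm{R_\eps}{L^2}\le C$, this yields \eqref{equ:st 2.ta} and completes the proof.
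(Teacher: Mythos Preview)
Your proof of \eqref{equ:st 1.ta} is essentially the paper's argument: the paper splits
\[
\tilde{\mathcal A}(\eps,\cdot)\bsig-\tilde J(\cdot)\bsig
=(\gamma(\eps,\cdot)-1)\bigl(J_{\bT^\eps}^{-1}\otimes\cL_{J_{\bT^\eps}^{-1}}\bsig\bigr)
+\bigl(J_{\bT^\eps}^{-1}-\mathbf I\bigr)\otimes\cL_{J_{\bT^\eps}^{-1}}\bsig
+\mathbf I\otimes\bigl(\cL_{J_{\bT^\eps}^{-1}}\bsig-\cL_{\mathbf I}\bsig\bigr)
\]
and bounds each term via Lemma~\ref{lem.conv.sig}, whereas you first absorb $\gamma$ into the matrix factor by writing $A_\eps=\gamma J_{\bT^\eps}^{-1}-\mathbf I=(\mathrm{Adj}\,J_{\bT^\eps})^\top-\mathbf I$ and then split bilinearly. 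Your grouping is slightly cleaner (two $O(\eps)$ factors instead of three) but the mechanism is identical.

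For \eqref{equ:st 2.ta} you go considerably further than the paper, which disposes of this limit in a single sentence invoking the Dominated Convergence Theorem. Your explicit first-order expansion and identification of the Gâteaux derivative as $\tilde{\bV}_1^\top\otimes\cL_{\mathbf I}\bsig+\mathbf I\otimes\cL_{\hat{\bV}_1(\cdot,0)}\bsig$ is more informative, and the regularity issue you flag---that $\cL_{\hat{\bV}_1}\bsig$ formally involves \emph{all} first partials of $\bsig$, not just the combination $\bdiv\bsig$ controlled by $\mathbb X$---is a genuine point the paper does not address. Your remedy (interpret $\eps\,\cL_{\hat{\bV}_1(\cdot,\eps)}\bsig$ globally as the pulled-back divergence $(\bdiv_x\bsig^\eps)\circ\bT^\eps-\bdiv\bsig$, obtain uniform $L^2$ bounds from \eqref{eq.conv.sig2}, and pass to the limit by density from smooth tensors) is a sound way to make the DCT step rigorous; the paper's one-line appeal to DCT implicitly presumes either this or enough extra regularity on $\bsig$. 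In short, your argument is correct and, for the second limit, more careful than the paper's.
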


\begin{proof}
Using \eqref{def.ta} and \eqref{def.ti} we have
\begin{align*}
&\tilde{\mathcal{A}}(\eps,\vecy) \bsig(\vecy)-\tilde{J}(\vecy)\bsig(\vecy)
\notag\\
&=(\gamma(\eps,\vecy)-1)\Big(
J_{\bT^{\eps}}^{-1} \otimes 
\cL_{J_{\bT^{\eps}}^{-1}} \bsig(\vecy)\Big)
+\big(J_{\bT^{\eps}}^{-1}-\mathbf{I}\big)
 \otimes 
\cL_{J_{\bT^{\eps}}^{-1}} \bsig(\vecy) 
+ \mathbf{I} \otimes 
\Big(\cL_{J_{\bT^{\eps}}^{-1}}\bsig(\vecy)-\cL_{\bf I} \bsig(\vecy)\Big).
\end{align*}
Now using Lemma \ref{lem.conv.sig}, we achieve
\begin{align*}
\|\tilde{\mathcal{A}}(\eps,\cdot)\bsig-\tilde{J}(\cdot)\bsig \|_{L^2(\R^3)} \leq 
O(\eps),
\end{align*}
which proves \eqref{equ:st 1.ta}. Again, using \eqref{def.ta} and Dominated Convergence Theorem we have \eqref{equ:st 2.ta}.
\end{proof}
\subsection{Material and shape derivatives} \label{sec.app}
In this section we give a general overview on first and second order shape calculus (see \cite{SokZol92}). 
These definitions and Lemmas can be introduced for the stress tensor $\bsig,$ pressure $p$ and the skew-symmetric tensor $\bs$ in the spaces $\mathbb{X}$\dela{(or $\H^{\frac{1}{2}}(\partial \OF))$} and $\bcQ$ respectively.
\par
Let $D$ be a deterministic bounded domain in $\R^3$
with boundary $\partial D$ of class $C^k,\,k \geq 2$. For $\eps>0,$ let $D^{\eps}$ be the perturbed domain with respect to $\bT^{\eps}$ (where $\bT^\eps$ is defined by~\eqref{equ:T eps define} and~\eqref{equ:T e cond}). Let us define the boundary variations $\bV_1$ and $\bV_2$ by
\begin{equation}\label{equ:V def}
\bV_i(\vecx)
:=
\kappa_i(\vecx)\vecn{}(\vecx),\,\,i=1,2
, \quad \mbox{where}\quad \dela{\kappa_i(\vecx) \in \R \quad \mbox{such that} \quad} \|\kappa_i  \|_{\mathbb{W}^{2,\infty}(\partial D)\cap C^{2,1}(\partial D)} \leq 1.
\end{equation}

We employ second order variations of the type
\begin{align*}
D_{\eps,\delta}[\kappa_1,\kappa_2]
&:=
(D_{\eps}[\kappa_1])_{\delta}[\kappa_2]:=\Big\{\vecx +\eps \kappa_1(\vecx)\bn(\vecx)+\delta \kappa_2(\vecx)\bn(\vecx): \vecx \in D \Big\},
\\
\partial D_{\eps,\delta}[\kappa_1,\kappa_2]
&:=
(\partial D_{\eps}[\kappa_1])_{\delta}[\kappa_2]:=\Big\{\vecx +\eps \kappa_1(\vecx)\bn(\vecx)+\delta \kappa_2(\vecx)\bn(\vecx): \vecx \in \partial D \Big\}.
\end{align*}
\begin{defi}\label{defi.der}
Let $\alpha > 0.$ For any sufficiently small $\eps,$ let $v^{\eps}(D^{\eps})$ be an element in $\mathrm{H}^{\alpha}(D^{\eps}).$ The {\bf material derivative} weak (strong)\dela{(or local shape derivative)} of $v^{\eps}(D^{\eps})$ in the direction of a vector field $\bV_1$ (given by \eqref{equ:V def}) denoted by $\dot{v}(D)=\dot{v}[\kappa_1,D]$ and is defined by
\[
\dot{v}(D):=\lim_{\eps \rightarrow 0} \dfrac{ v^{\eps}(D^{\eps})\circ \bT^{\eps} - v^0(D)}{\eps},
\]
provided the limit exists in weak (strong) sense in the corresponding space $\mathrm{H}^\alpha(D).$

\begin{rem} The function $\dot{v}(D)$ is the weak (strong) material derivative of
$v^{\eps}(D^{\eps})$ in $\mathrm{H}^{\alpha}(D^{\eps})$ if 
\[
\dfrac{ v^{\eps}(D^{\eps})\circ
\bT^{\eps} - v^0(D)}{\eps}
\]
is weakly (strongly) convergent to $\dot{v}(D)$ in
$\mathrm{H}^{\alpha}(D)$ as $\eps \rightarrow 0$.
\end{rem}
Proceeding in similar lines, for $\beta>0,$ one can define the material derivative weak (strong) of $v^{\eps}(\partial D^{\eps})$ in $\mathrm{H}^{\beta}(\partial D^{\eps})$) in the direction of a vector field $\bV_1$ denoted by $\dot{v}(\partial D)=\dot{v}[\kappa_1,\partial D].$
In the following proposition, let us mention the relation between weak (strong) material derivative on the domain and on the boundary. 
\begin{prop} Let $\dot{v}(D)$ be the weak (strong) material derivative of an element $v^{\eps}(D^{\eps}) \in \mathrm{H}^{\alpha}(D^{\eps})$, in the direction of a vector field $\bV_1$ (given by \eqref{equ:V def}). Then for $\alpha>1/2$, there exists the weak (strong) material derivative $\dot{v}(\partial D)$ of the element $v^{\eps}(\partial D^{\eps})=v^{\eps}(D^{\eps})|_{\partial D^{\eps}}$
 which is given by 
 $$\dot{v}(\partial D)=\dot{v}(D)|_{\partial D}\quad \mbox{in} \quad \mathrm{H}^{\alpha-1/2}(\partial D).$$.
\end{prop}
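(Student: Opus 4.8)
The plan is to reduce the assertion to the continuity of the trace operator, combined with the very definition of the material derivative as the limit of difference quotients in Definition~\ref{defi.der}. First I would record the key commutation identity between composition with $\bT^{\eps}$ and restriction to the boundary. Since $\bT^{\eps}$ is, by the smoothness assumed on $\tilde\kappa$ and $\tilde\n$, a $C^{k}$ diffeomorphism of $\R^3$ carrying $\partial D$ exactly onto $\partial D^{\eps}$, for every point $\vecx\in\partial D$ we have $\bT^{\eps}(\vecx)\in\partial D^{\eps}$, whence
\[
v^{\eps}(\partial D^{\eps})\circ\bT^{\eps}
=\bigl(v^{\eps}(D^{\eps})\circ\bT^{\eps}\bigr)\big|_{\partial D}.
\]
In words, taking the boundary trace and then pulling back by $\bT^{\eps}$ coincides with pulling back the bulk function and then restricting to $\partial D$.

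Next I would form the boundary difference quotient and invoke linearity of the restriction map together with the identity above, obtaining
\[
\frac{v^{\eps}(\partial D^{\eps})\circ\bT^{\eps}-v^0(\partial D)}{\eps}
=\left(\frac{v^{\eps}(D^{\eps})\circ\bT^{\eps}-v^0(D)}{\eps}\right)\bigg|_{\partial D}.
\]
Writing $w_{\eps}:=\bigl(v^{\eps}(D^{\eps})\circ\bT^{\eps}-v^0(D)\bigr)/\eps$ for the bulk difference quotient, the right-hand side is precisely $\gamma_0 w_{\eps}$, where $\gamma_0\colon \mathrm{H}^{\alpha}(D)\to\mathrm{H}^{\alpha-1/2}(\partial D)$ denotes the trace operator. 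Because $\partial D$ is of class $C^{k}$ with $k\ge 2$ and $\alpha>1/2$, the trace theorem ensures that $\gamma_0$ is a bounded linear operator, so $\gamma_0 w_{\eps}\in\mathrm{H}^{\alpha-1/2}(\partial D)$ is well defined for each $\eps$.

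It then remains to pass to the limit $\eps\to 0$. By hypothesis $w_{\eps}\to\dot v(D)$ in $\mathrm{H}^{\alpha}(D)$ in the strong (resp. weak) sense. A bounded linear operator maps strongly convergent sequences to strongly convergent sequences, and, being weak-to-weak continuous, it maps weakly convergent sequences to weakly convergent sequences; applying this to $\gamma_0$ gives $\gamma_0 w_{\eps}\to\gamma_0\dot v(D)=\dot v(D)|_{\partial D}$ in $\mathrm{H}^{\alpha-1/2}(\partial D)$, in the corresponding sense. By Definition~\ref{defi.der} applied on the boundary (with $\beta=\alpha-1/2$), this says exactly that the weak (strong) material derivative $\dot v(\partial D)$ of $v^{\eps}(\partial D^{\eps})$ exists and equals $\dot v(D)|_{\partial D}$, which is the claim.

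The only genuinely delicate point is the commutation identity of the first step: it hinges on $\bT^{\eps}$ mapping $\partial D$ onto $\partial D^{\eps}$, which is precisely how the perturbed domain $D^{\eps}=\bT^{\eps}(D)$ was built, so that no regularity is lost in passing between bulk and boundary. The remaining ingredients, namely the trace theorem for $\alpha>1/2$ and the preservation of weak and strong convergence by bounded linear operators, are standard and require no further work.
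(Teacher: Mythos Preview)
Your argument is correct: the commutation identity $(v^{\eps}(D^{\eps})\circ\bT^{\eps})|_{\partial D}=v^{\eps}(\partial D^{\eps})\circ\bT^{\eps}$ together with boundedness of the trace operator $\gamma_0:\mathrm{H}^{\alpha}(D)\to\mathrm{H}^{\alpha-1/2}(\partial D)$ for $\alpha>1/2$ gives exactly what is needed, and your observation that bounded linear operators preserve both strong and weak convergence handles the two cases uniformly. The paper does not supply its own proof but simply refers to Proposition~2.75 in \cite{SokZol92}; the argument there is essentially the one you have written, so there is no substantive difference to report.
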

For proof, we refer to Proposition 2.75 in \cite{SokZol92}. 
\dela{The shape derivative of $v^{\eps}$ is defined by
\begin{align}\label{def.app.sd}
v^{\prime}[\kappa_1](\vecx):=\lim_{\eps \goto 0} \dfrac{v_{\eps}[\kappa_1](\vecx)-v(\vecx)}{\eps},\quad \vecx \in D \cap D_{\eps}.
\end{align}
The second order shape derivative, the `shape Hessian' is a bilinear form on pairs of boundary perturbation fields $(V,V'),$ denoted by $v''=v''[V,V']=v''[\kappa_1,\kappa_2]$ and is defined by
\begin{align}\label{def.app.sh}
v''[\kappa_1,\kappa_2](\vecx)
\dela{=\lim_{\eps \goto 0} \dfrac{(v'_{\eps}[\kappa_2])[\kappa_1](\vecx)-v'[\kappa_1](\vecx)}{\eps}}=\lim_{\eps \goto 0} \dfrac{(v'_{\eps}[\kappa_1])[\kappa_2](\vecx)-v'[\kappa_2](\vecx)}{\eps},\quad \vecx \in D \cap D_{\eps}.
\end{align}
Here $ (v'_{\eps}[\kappa_1])[\kappa_2](\vecx)= \lim_{\delta \goto 0} \dfrac{(v_{\eps}[\kappa_1])_{\delta}[\kappa_2](\vecx)-(v_{\eps}[\kappa_1])(\vecx)}{\delta},\quad \vecx \in D \cap D_{\eps}.$}
\end{defi}

\begin{defi}\label{def.sd}
Let $\alpha>1/2.$ Let the weak material derivative $\dot{v}(D)$ exists in $\mathrm{H}^{\alpha}(D)$ (or $\dot{v}(\partial D) \in \mathrm{H}^{\alpha-1/2}(\partial D)$) and $\nabla v^0 \cdot \bV_1 \in \mathrm{H}^{\alpha}$ for  vector field $\bV_1$ (given by \eqref{equ:V def}). The {\bf shape derivative} of $v^{\eps}(D^{\eps}) \in \mathrm{H}^{\alpha}(D^{\eps})$ (or $ v^{\eps}(\partial D^{\eps}) \in \mathrm{H}^{\alpha-1/2}(\partial D^{\eps})$)
 is given by
\begin{equation}\label{def.app.sd}
 v^{\prime} =
\begin{cases} 
\dot{v}(D)-\nabla v^{0}(D)\cdot \bV_1, \qquad &\text{if}  \quad v^{\eps}(D^{\eps}) \in \mathrm{H}^\alpha(D^{\eps}),\\
\dot{v}(\partial D)- \nabla_{\partial D_0} v^{0}(\partial D)\cdot \bV_1,\quad \quad 
&\text{if} \quad v^{\eps}(\partial D^{\eps}) \in \mathrm{H}^{\alpha-1/2}(\partial D^{\eps}).
\end{cases}                   
\end{equation}\end{defi}
If $(\bV_1,\bV_2)$ are pairs of boundary perturbation fields given by \eqref{equ:V def}, let us consider $(\dot{v})^{\delta}(D^\delta) \in \mathrm{H}^{\alpha}(D^\delta)$ defined in the direction of the vector field $\bV_2$. Then the second order material derivative of $(v^{\eps})^{\delta}$ which is a bilinear form on the pair of vector fields $(\bV_1,\bV_2)$ denoted by $\ddot{v}(D)=\ddot{v}[\kappa_1,\kappa_2,D]$ and is given by
\[
\ddot{v}(D):=\lim_{\delta \rightarrow 0} \dfrac{ (\dot{v})^{\delta}(D^{\delta})\circ
\bT^{\delta} - \dot{v}(D)}{\delta}.
\]
{\bf The shape Hessian} is the second order shape derivative 
denoted by $v''=v''[\kappa_1,\kappa_2]$ and is defined by
\begin{equation}\label{def.app.sh}
 v^{''} =
\begin{cases} 
\ddot{v}(D)- \dot{(\nabla v)}(D)\cdot \bV_1 -\nabla v(D) \cdot \dot{\bV}_1
- \nabla v'(D) \cdot \bV_2, \qquad & \text{if} \,\, \dot{v}^{\delta} \in \mathrm{H}^\alpha,
\\
\ddot{v}(\partial D)- \mathcal{M}(\nabla_{\partial D_0} v)(D)\cdot \bV_1 -\nabla_{\partial D_0}
v(D) \cdot \dot{\bV}_1 - \nabla_{\partial D_0} v'(D) \cdot \bV_2,\, 
\qquad &\mbox{if } \dot{v}^{\delta} \in \mathrm{H}^{\alpha-1/2},
\end{cases}
\end{equation}
where $\mathcal{M}(f)$ denotes the material derivative of a function $f$.


\begin{lem}\label{lem.cpt.k} Let $\alpha >0$. Let $v^{\prime}$ and $v''$ be shape derivative and shape Hessian of $v(D) \in \mathrm{H}^{\alpha}(D),$ then for any compact set $K \subset \subset D$ we have
\begin{align}\label{eq.cpt.k}
v^{\prime}=\lim_{\eps \rightarrow 0} \dfrac{v^{\eps}-v^0}{\eps} \quad \mbox{and} \quad 
v''=\lim_{\delta \rightarrow 0} \dfrac{(v')^{\delta}-v'}{\delta}
\mbox{in} \quad \mathrm{H}^{\alpha}(K).
\end{align} 
\end{lem}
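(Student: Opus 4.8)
The plan is to prove the two identities in \eqref{eq.cpt.k} in turn, the second following from the first applied to the shape derivative $v'$ in place of $v$. Throughout I fix a compact set $K\subset\subset D$. Since $\bT^\eps\to\mathrm{id}$ uniformly as $\eps\to0$ (recall $\bT^\eps(\vecx)=\vecx+\eps\bV(\vecx)$ with the velocity field $\bV$ supported in $B_R$), there is $\eps_1>0$ such that $K\subset D^\eps$ for all $0\le\eps\le\eps_1$; hence $v^\eps$ and the difference quotient $(v^\eps-v^0)/\eps$ are well defined on $K$. Away from $\partial D$ the solution fields are interior-regular, so on a slightly larger compact neighbourhood $K'$ with $K\subset\subset K'\subset\subset D$ the family $\{v^\eps\}$ is bounded in $\mathrm{H}^{\alpha+1}(K')$ uniformly in $\eps$; this uniform interior bound is what ultimately legitimizes the passage to the limit in the $\mathrm{H}^{\alpha}(K)$ norm, and rests on the $\eps$-independent resolvent and stability estimates of Propositions \ref{specT1} and \ref{wellposed}.

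First I would establish the shape-derivative identity. Starting from the algebraic splitting
\begin{equation*}
\frac{v^\eps-v^0}{\eps}
=
\frac{v^\eps-v^\eps\circ\bT^\eps}{\eps}
+
\frac{v^\eps\circ\bT^\eps-v^0}{\eps},
\end{equation*}
the second summand converges to the material derivative $\dot{v}(D)$ in $\mathrm{H}^{\alpha}(D)$, hence in $\mathrm{H}^{\alpha}(K)$, by Definition \ref{defi.der} and Theorem \ref{thm.mat.der}. For the first summand I would use $\bT^\eps(\vecx)=\vecx+\eps\bV(\vecx)$ and the fundamental theorem of calculus on $K'$ to write
\begin{equation*}
\frac{v^\eps\circ\bT^\eps-v^\eps}{\eps}(\vecx)
=
\int_0^1 \nabla v^\eps\big(\vecx+t\eps\bV(\vecx)\big)\cdot\bV(\vecx)\,dt,
\end{equation*}
which is valid because $v^\eps\in\mathrm{H}^{\alpha+1}(K')$. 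Using the uniform interior bound together with $v^\eps\to v^0$ on $K'$, the integrand converges to $\nabla v^0(\vecx)\cdot\bV(\vecx)$ in $\mathrm{H}^{\alpha}(K)$, so the first summand $(v^\eps-v^\eps\circ\bT^\eps)/\eps$ tends to $-\nabla v^0\cdot\bV$. Combining the two summands gives $\lim_{\eps\to0}(v^\eps-v^0)/\eps=\dot{v}(D)-\nabla v^0(D)\cdot\bV$, which is exactly $v'$ by Definition \ref{def.sd}.

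For the shape-Hessian identity I would apply the result just proved, with the shape derivative $v'$ playing the role of the state and the second velocity field $\bV_2$ (with parameter $\delta$) playing the role of $\bV$. Writing the same splitting for $(v')^\delta-v'$, the transported quotient converges to the second-order material derivative $\ddot{v}(D)$, and the fundamental-theorem-of-calculus term converges to $-\nabla v'\cdot\bV_2$. Invoking the definition \eqref{def.app.sh} of the shape Hessian then yields $\lim_{\delta\to0}((v')^\delta-v')/\delta=\ddot{v}-\dot{(\nabla v)}\cdot\bV_1-\nabla v\cdot\dot{\bV}_1-\nabla v'\cdot\bV_2=v''$ in $\mathrm{H}^{\alpha}(K)$. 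The repetition of the interior-regularity argument for $v'$ is admissible because, by Theorem \ref{thm.sh.der}, $v'$ solves an elliptic transmission system whose interior equations \eqref{eq.dom.S} and \eqref{eq.dom.F} are homogeneous, so $v'$ is interior-regular on $K'$.

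The main obstacle I anticipate is the passage to the limit in the $\mathrm{H}^{\alpha}(K)$ norm rather than merely pointwise: the fundamental-theorem-of-calculus step requires one extra derivative of $v^\eps$, which is available only because interior elliptic regularity upgrades the $\mathrm{H}^{\alpha}$ solutions to $\mathrm{H}^{\alpha+1}$ on compact subsets of $D$, and one must verify that the corresponding bounds are uniform in $\eps$ (respectively $\delta$). This uniformity is precisely where the $\eps$-independent estimates of Propositions \ref{specT1} and \ref{wellposed} are indispensable; without them the difference quotients could fail to form a Cauchy family in $\mathrm{H}^{\alpha}(K)$ and the limits need not exist.
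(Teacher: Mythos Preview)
The paper does not give its own proof; it simply refers to Lemma~3.6 of~\cite{CPT}. Your splitting
\[
\frac{v^\eps-v^0}{\eps}=\frac{v^\eps-v^\eps\circ\bT^\eps}{\eps}+\frac{v^\eps\circ\bT^\eps-v^0}{\eps}
\]
followed by the identification of the two limits as $-\nabla v^0\cdot\bV$ and $\dot v$ is exactly the standard argument and agrees with what~\cite{CPT} does for the first identity.

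There is, however, a real slip in your shape-Hessian step. When you apply the first-order result to $v'$ with velocity field $\bV_2$, the transported quotient $\big((v')^{\delta}\circ\bT^{\delta}-v'\big)/\delta$ converges to the material derivative of $v'$ in the direction $\bV_2$, \emph{not} to $\ddot v$. Since $v'=\dot v-\nabla v\cdot\bV_1$, the product rule gives
\[
\big(\text{material derivative of }v'\text{ in direction }\bV_2\big)=\ddot v-\dot{(\nabla v)}\cdot\bV_1-\nabla v\cdot\dot\bV_1,
\]
and only after this expansion, combined with the fundamental-theorem-of-calculus term $-\nabla v'\cdot\bV_2$, do you arrive at $\ddot v-\dot{(\nabla v)}\cdot\bV_1-\nabla v\cdot\dot\bV_1-\nabla v'\cdot\bV_2=v''$. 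As written, you assert that the transported quotient yields $\ddot v$ directly and then the two extra terms appear without justification; that is the gap, and it is the missing link between your two-term sum and the four-term definition~\eqref{def.app.sh}.

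A smaller structural remark: Lemma~\ref{lem.cpt.k} is stated in the appendix for a generic $v\in\mathrm{H}^{\alpha}(D)$, yet your argument leans on interior elliptic regularity for the specific fluid--solid system and on the $\eps$-uniform estimates of Propositions~\ref{specT1}--\ref{wellposed}. This restricts your proof to the particular solution rather than the lemma in the generality stated. For the uses made of the lemma in the paper this is harmless, but you should say so explicitly; the proof in~\cite{CPT} does not rely on problem-specific regularity.
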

For proof see Lemma 3.6 of \cite{CPT}. 

With \eqref{eq.cpt.k} at hand, we obtain for all $0 \leq \eps < \eps_0,$ the `shape Taylor expansion'
\begin{align}\label{det.tay.}
v^{\eps}(\vecx)=v(\vecx)+\eps v'[\kappa_1](\vecx)+\dfrac{\eps^2}{2}v''[\kappa_1,\kappa_1](\vecx)+O(\eps^3)\quad \mbox{for} \quad \vecx \in K \subset \subset D \cap D_{\eps}.
\end{align}

\dela{
\begin{defi}
For any sufficiently small $\eps,$ let $p^{\eps}$ be an element in $\mathrm{H}^1(\OF^{\eps}))$ or $\H^{\frac{1}{2}}(\partial \OF^{\eps}).$ The material derivative of $p^{\eps},$ denoted by $\dot{p}$ is defined by
\begin{align}
\dot{p}:=\lim_{\eps \rightarrow 0} \dfrac{p^{\eps}\circ \bT^{\eps}-p^0}{\eps},
\end{align}
provided the limit exists in the corresponding space $\mathrm{H}^1(\OF)$ or $\H^{\frac{1}{2}}(\partial \OF).$
The shape derivative of $p^{\eps}$ is defined by
\begin{align}
p^{\prime}_{ij}:=\left\{\begin{aligned}
                  &\dot{p}-\nabla p^{0}\cdot V, \qquad \mbox{if} \quad p^{\eps} \in \mathrm{H}^1(\OF^{\eps}),\\
                   &\dot{p}-\nabla_{\Sigma_0} p^{0}\cdot V,\quad \mbox{if} \quad p^{\eps} \in \H^{\frac{1}{2}}(\partial \OF^{\eps})\quad  \ada{\mbox{or } \quad \H^{\frac{1}{2}}(\Sigma^{\eps})}.
                   \end{aligned}
   \right.
\end{align}
\end{defi}

\ada{
\begin{lem}
If $p^{\prime}$ is a shape derivative of $p^{\eps} \in \mathrm{H}^1(\OF^{\eps}))$, then for any compact set $K_2 \subset \subset \OF$ we have
\begin{align}
p^{\prime}=\lim_{\eps \rightarrow 0} \dfrac{p^{\eps}-p^0}{\eps} \quad \mbox{in} \quad \ada{\H^1(K_2)}.
\end{align} 

\end{lem} 
 }}
 
\begin{lem}\label{pro:mat sha pro}
Let $\alpha>0.$ Let $ \dot v$, $\dot w$ be material derivatives, and
$v'$, $w'$ be shape derivatives of
 $v^\eps$, $w^\eps$ in $\mathrm{H}^{\alpha}(D^\eps)$, $\eps\ge 0$, respectively. 
Then the following statements are true.
\begin{itemize}
\item[(i)] \label{ite:t1}  
The material and shape derivatives of the product $v^\eps w^\eps$ 
are $
\dot v w^0
+
v^0\dot w$ and 
$
v' w^0
+
v^0 w'$, respectively.
\item[(ii)] \label{ite:t2}
The material and shape derivatives of the quotient $\dfrac{v^\eps}{w^\eps}$ 
are 
\[
\dfrac{(\dot v w^0
-
v^0\dot w)}{(w^0)^2}
\quad\text{and}\quad
\dfrac{(v' w^0
-
v^0 w')}{(w^0)^2},
\]
respectively,
provided that all the fractions are well-defined.
\item[(iii)]\label{ite:t3}
If $v^\eps = v$ for all $\eps\ge 0$, then $ \dot v = \nabla v^0\cdot \bV_1
= \nabla v\cdot \bV_1$
and $v' = 0$.
\item[(iv)] \label{ite:t4}
If 
\[
\mathcal{J}_1(D^\eps) := \displaystyle\int_{D^\eps} v^\eps\,d\vecx,
\quad
\mathcal{J}_2(D^\eps) := \displaystyle\int_{\partial D^\eps} v^\eps\,d\sigma,
\]
and
\[
d\mathcal{J}_i(D^\eps)|_{\eps=0}
:=
\lim_{\eps\goto 0}
\frac{\mathcal{J}_i(D^\eps)-\mathcal{J}_i(D^0)}{\eps},
\
i=1,2,
\]
then 
\[
d\mathcal{J}_1(D^\eps)|_{\eps=0}
=
\int_{D^0} v'\,d\vecx
+
\int_{\partial D^0} v^0
\inpro{\bV_1}{{\vecn}}
\,d\sigma
\]
and 
\[
d\mathcal{J}_2(D^\eps)|_{\eps=0}
=
\int_{\partial D^0} v'\,d\sigma
+
\int_{\partial D^0} 
\left(
\frac{\partial v^0}{\partial \bn}
+
\divv_{\partial D^0}(\vecn)\,
v^0
\right)
\inpro{\bV_1}{{\vecn}}
\,d\sigma.
\]
\dela{\item[(v)]\label{ite:t5} If the shape Hessian of the functionals $\mathcal{J}_i(D^{\eps})$ are denoted by $d^2 \mathcal{J}_i(D^\eps)$ and are given as
\[
d^2 \mathcal{J}_i(D^\eps)|_{\eps=0}
:=
\lim_{\eps\goto 0}
\frac{d\mathcal{J}_i(D^\eps)-d\mathcal{J}_i(D^0)}{\eps},
\
i=1,2,
\] then,
\[
d^2\mathcal{J}_1(D^\eps)|_{\eps=0}
=
\int_{D^0} v''\,d\vecx
+\int_{\partial D^0} v' \tilde{\kappa_2}d\sigma+\int_{\partial D^0}(v' \tilde{\kappa_1}+v \tilde{\kappa_1}')\,d\sigma
+
\int_{\partial D^0} \Big(\dfrac{\partial}{\partial \bn}(v \tilde{\kappa_1})+\bdiv_{\partial D^0}(\bn)v \tilde{\kappa_1} \Big) \tilde{\kappa_2} \, d\sigma
\]
and 
\[
d^2\mathcal{J}_2(D^\eps)|_{\eps=0}=
\]}
\item[(v)]\label{ite:t5} The shape derivatives of 
$\left.\dfrac{\partial v}{\partial \bn^{\eps}}\right|_{\partial D^{\eps}}$ and 
$\left. w^{\eps}\dfrac{\partial v}{\partial \bn^{\eps}}\right|_{\partial D^{\eps}}$ are, respectively,
\[
\nabla_{\partial D^{\eps}}v \cdot \nabla_{\partial D^0} \langle \bV_1,\bn \rangle 
\quad\text{and}\quad
\left. w' \dfrac{\partial v}{\partial \bn}\right|_{\partial D^{0}}
-
w^0 \Big(\nabla_{\partial D^0} v \cdot \nabla_{\partial D^0}\langle \bV_1,\bn \rangle   \Big).
\]
\end{itemize}
\end{lem}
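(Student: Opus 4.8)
The plan is to dispose of the algebraic statements (i)--(iii) directly from the definitions and then build the integral formula (iv) and the boundary formulas (v) on top of them, confining the genuinely geometric work to the boundary terms. Since composition with $\bT^\eps$ respects pointwise products and quotients, for (i) I write
\[
\frac{(v^\eps w^\eps)\circ\bT^\eps - v^0 w^0}{\eps}
= \frac{v^\eps\circ\bT^\eps - v^0}{\eps}\,(w^\eps\circ\bT^\eps)
+ v^0\,\frac{w^\eps\circ\bT^\eps - w^0}{\eps},
\]
and let $\eps\to 0$, using $w^\eps\circ\bT^\eps\to w^0$ and Definition~\ref{defi.der}; the shape-derivative part then follows after substituting $\dot v = v' + \nabla v^0\cdot\bV_1$ from Definition~\ref{def.sd} and cancelling. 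Part (ii) is identical once the two difference quotients are placed over the common denominator $(w^\eps\circ\bT^\eps)\,w^0$. For (iii), a first-order Taylor expansion of $v\circ\bT^\eps = v(\cdot + \eps\bV_1)$ gives $\dot v = \nabla v\cdot\bV_1$, whence $v' = \dot v - \nabla v^0\cdot\bV_1 = 0$.

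For the domain integral in (iv) I would transform by $\vecx = \bT^\eps(\vecy)$, writing $\mathcal{J}_1(D^\eps) = \int_{D^0}(v^\eps\circ\bT^\eps)\,\gamma(\eps,\cdot)\,d\vecy$ with the Jacobian determinant $\gamma$ of~\eqref{equ:vbn 4}. Splitting the difference quotient as in (i) and using $\gamma(\eps,\cdot)\to 1$ together with $(\gamma(\eps,\cdot)-1)/\eps\to\gamma_1 = \divv\bV_1$ gives $\int_{D^0}(\dot v + v^0\,\divv\bV_1)$; rewriting $\dot v = v' + \nabla v^0\cdot\bV_1$, recognising $\nabla v^0\cdot\bV_1 + v^0\,\divv\bV_1 = \divv(v^0\bV_1)$, and applying the divergence theorem produces $\int_{D^0}v' + \int_{\partial D^0}v^0\,\inpro{\bV_1}{\bn}$. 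The boundary case is the first real obstacle: the surface element transforms as $d\sigma^\eps = \omega(\eps,\cdot)\,d\sigma^0$ with $\omega(\eps,\cdot) = \gamma(\eps,\cdot)\,|J_{\bT^\eps}^{-\top}\bn|$, and the key computation is $\partial_\eps\omega|_{\eps=0} = \divv_{\partial D^0}\bV_1 = \inpro{\bV_1}{\bn}\,\divv_{\partial D^0}(\bn)$, the last equality holding because $\bV_1=\kappa_1\bn$ is normal. Converting the (domain) material derivative through $\dot v = v' + \nabla v^0\cdot\bV_1 = v' + \inpro{\bV_1}{\bn}\,\tfrac{\partial v^0}{\partial\bn}$ and collecting the two contributions then yields the factor $\big(\tfrac{\partial v^0}{\partial\bn} + \divv_{\partial D^0}(\bn)\,v^0\big)\inpro{\bV_1}{\bn}$.

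For (v) I would reduce everything to the shape derivative of the unit normal. Writing $\tfrac{\partial v}{\partial\bn^\eps} = \nabla v\cdot\bn^\eps$ and using the transformation rule $\bn^\eps\circ\bT^\eps = J_{\bT^\eps}^{-\top}\bn/|J_{\bT^\eps}^{-\top}\bn|$ with the expansion~\eqref{eq.conv.sig1} of $J_{\bT^\eps}^{-\top}$, one differentiates in $\eps$ to obtain the central identity $\bn' = -\nabla_{\partial D^0}\inpro{\bV_1}{\bn}$; combined with the product rule (i) and the fixed-factor rule (iii) applied to $\nabla v$, this gives the shape derivative of $\tfrac{\partial v}{\partial\bn^\eps}$ as the stated tangential expression. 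A single further application of (i) to $w^\eps\cdot\tfrac{\partial v}{\partial\bn^\eps}$ then yields the second formula.

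The main obstacle throughout is the boundary calculus in (iv) and (v): both rest on the first-order expansions of the surface Jacobian $\omega(\eps,\cdot)$ and of the transported normal $\bn^\eps\circ\bT^\eps$, and on the tangential Green's formula (Lemma~\ref{lem.int.by.parts}) to organise the tangential-divergence and curvature terms $\divv_{\partial D^0}(\bn)$. The algebraic parts (i)--(iii) and the bulk part of (iv) are routine once the difference quotients are split in the manner above.
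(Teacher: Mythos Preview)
Your proposal is correct and follows exactly the route the paper indicates: the paper's own proof merely states that (i)--(iii) and (v) ``can be obtained by using elementary calculations'' and refers to \cite[pages~113--116]{SokZol92} for (iv), and you have written out precisely those elementary calculations and the standard Sokolowski--Zol\'esio change-of-variables argument. Your treatment of (v) in fact re-derives the content of Lemma~\ref{lem:n shape} ($\bn'=-\nabla_{\partial D^0}\kappa$) along the way, which is consistent with how the paper uses it.
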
 
 
\begin{proof}
Statements~(i)--(iii) and (v) can be obtained by using 
elementary calculations. Statement~(iv) are proved in~\cite[pages~113--116]{SokZol92}.
\end{proof}

\begin{lem}\label{lem:n shape}
The material and shape derivatives of the normal field
$\bn^\eps$ are given by 
\[
\dot{\bn} 
=
\bn' = -\nabla_{\partial \OS}\kappa.
\]
\end{lem}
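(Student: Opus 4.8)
The plan is to transform the unit normal under the diffeomorphism $\bT^\eps$, expand the resulting expression to first order in $\eps$ using Lemma~\ref{lem.conv.sig}, and then read off both derivatives. First I would recall the classical Nanson-type transformation rule for the outward unit normal of $\partial\OS^\eps=\bT^\eps(\partial\OS)$: after pulling back along $\bT^\eps$ one has
\[
\bn^\eps\circ\bT^\eps=\frac{J_{\bT^\eps}^{-\top}\bn}{\big|J_{\bT^\eps}^{-\top}\bn\big|},
\]
where the Jacobian determinant $\gamma(\eps,\cdot)$ cancels in the normalization, consistently with the convention $\mathcal{A}=\gamma J_{\bT^\eps}^{-1}J_{\bT^\eps}^{-\top}$ used in Appendix~\ref{sec.app.shape}. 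Since $\bT^\eps(\vecx)=\vecx+\eps\bV(\vecx)$ we have $J_{\bT^\eps}=\mathbf{I}+\eps\nabla\bV$, and Lemma~\ref{lem.conv.sig} gives $J_{\bT^\eps}^{-\top}=\mathbf{I}-\eps(\nabla\bV)^\top+O(\eps^2)$ uniformly on $\R^3$.

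Next I would substitute this expansion and introduce the vector $\bw:=(\nabla\bV)^\top\bn$, whose components are $w_i=\sum_j(\partial_i V_j)\,n_j$ on $\partial\OS$. Expanding numerator and denominator yields $J_{\bT^\eps}^{-\top}\bn=\bn-\eps\bw+O(\eps^2)$ and $\big|J_{\bT^\eps}^{-\top}\bn\big|=1-\eps\langle\bw,\bn\rangle+O(\eps^2)$, so that
\[
\bn^\eps\circ\bT^\eps=\bn-\eps\big(\bw-\langle\bw,\bn\rangle\bn\big)+O(\eps^2).
\]
Dividing by $\eps$ and letting $\eps\to0$ produces $\dot{\bn}=-\big(\mathbf{I}-\bn\otimes\bn\big)\bw$, i.e. the material derivative is minus the tangential projection of $\bw$. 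To identify this with $-\nabla_{\partial\OS}\kappa$ I would use $\bV=\tilde\kappa\tilde\bn$ together with $\kappa=\langle\bV,\bn\rangle$ from~\eqref{equ:V def.main}: a direct computation of $w_i=\sum_j(\partial_i V_j)n_j$, using $|\tilde\bn|=1$ near $\partial\OS$ (so $\sum_j n_j\partial_i\tilde n_j=0$), gives $\bw=\nabla\tilde\kappa$ on $\partial\OS$; projecting onto the tangent plane removes the extension-dependent normal part and leaves $(\mathbf{I}-\bn\otimes\bn)\bw=\nabla_{\partial\OS}\kappa$, which is intrinsic. Hence $\dot{\bn}=-\nabla_{\partial\OS}\kappa$.

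Finally, to pass from $\dot{\bn}$ to $\bn'$ I would invoke the boundary shape-derivative formula of Definition~\ref{def.sd}, namely $\bn'=\dot{\bn}-\nabla_{\partial\OS}\bn\cdot\bV$. Because the perturbation is purely normal, $\bV=\kappa\bn$, so $\nabla_{\partial\OS}\bn\cdot\bV=\kappa\,(\nabla_{\partial\OS}\bn)\bn$; the Weingarten-type identity $(\nabla_{\partial\OS}\bn)\bn=\0$, which follows from $\partial_j^{\partial\OS}=\partial_j-n_j\partial_{\bn}$ and $\sum_j n_j^2=1$ exactly as in the step above, makes this correction vanish. Therefore $\bn'=\dot{\bn}=-\nabla_{\partial\OS}\kappa$, as claimed.

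The main obstacle I anticipate is not the formal $\eps$-expansion, which is routine given Lemma~\ref{lem.conv.sig}, but the careful bookkeeping around the extension $\tilde\bn$: both $\bw=(\nabla\bV)^\top\bn$ and $\nabla_{\partial\OS}\bn\cdot\bV$ involve derivatives of the extended normal field, and one must verify that the extension-dependent contributions lie in the normal direction and are thus annihilated by the tangential projection. Making these identities precise and manifestly independent of the chosen extension is the delicate part of the argument.
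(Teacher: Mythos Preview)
Your proposal is correct and complete. The paper itself does not give a proof of this lemma; it simply refers the reader to \cite[Lemma~3.9]{CPT}. Your argument via the transformation rule $\bn^\eps\circ\bT^\eps=J_{\bT^\eps}^{-\top}\bn/|J_{\bT^\eps}^{-\top}\bn|$ followed by a first-order expansion in $\eps$ is the standard derivation and is essentially what one finds in the cited reference and in the classical shape-optimization literature \cite{SokZol92}.

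One minor simplification is available in your last step. You invoke a Weingarten-type identity to show $(\nabla_{\partial\OS}\bn)\bn=\0$, but this is not needed: the tangential gradient $\nabla_{\partial\OS}n_i$ is, by its very definition, a tangent vector to $\partial\OS$, while $\bV=\kappa\bn$ is normal; hence $\nabla_{\partial\OS}n_i\cdot\bV=0$ automatically for each component $i$, and the correction term in Definition~\ref{def.sd} vanishes without any further computation. This also dispels your stated concern about extension-dependent contributions in that term.
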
 
\begin{proof}
See \cite[Lemma 3.9]{CPT}. 
\end{proof}


\end{appendix}


\begin{thebibliography}{10}

\bibitem{Ar1}
D.~N. Arnold, R.~S. Falk, and R.~Winther, \emph{Mixed finite element methods
  for linear elasticity with weakly imposed symmetry}, Math. Comp. \textbf{76}
  (2007), no. 260,  1699--1723.

\bibitem{Ar2}
D.~N. Arnold and R.~Winther, \emph{Mixed finite elements for elasticity},
  Numer. Math. \textbf{92} (2002), no.~3,  401--419.

\bibitem{Bacani}
J.~B. Bacani and J.~F.~T. Rabago, \emph{On the second-order shape derivative of
  the {K}ohn-{V}ogelius objective functional using the velocity method}, Int.
  J. Differ. Equ.  (2015) Art. ID 954836, 10.

\bibitem{Caubet_2011}
M.~Badra, F.~Caubet, and M.~Dambrine, \emph{Detecting an obstacle immersed in a
  fluid by shape optimization methods}, Math. Models Methods Appl. Sci.
  \textbf{21} (2011), no.~10,  2069--2101.

\bibitem{Estecahandy_2014}
H.~Barucq, R.~Djellouli, and E.~Estecahandy, \emph{Efficient {DG}-like
  formulation equipped with curved boundary edges for solving elasto-acoustic
  scattering problems}, Internat. J. Numer. Methods Engrg. \textbf{98} (2014),
  no.~10,  747--780.

\bibitem{Estecahandy_2018_SIAM}
H.~Barucq, R.~Djellouli, E.~Estecahandy, and M.~Moussaoui, \emph{Mathematical
  determination of the {F}r\'{e}chet derivative with respect to the domain for
  a fluid-structure scattering problem: case of polygonal-shaped domains}, SIAM
  J. Math. Anal. \textbf{50} (2018), no.~1,  1010--1036.

\bibitem{Bermu}
A.~Berm\'{u}dez, R.~Rodr\'{i}guez, and D.~Santamarina, \emph{Finite element
  approximation of a displacement formulation for time-domain elastoacoustic
  vibrations}, in Proceedings of the {I}nternational {C}onference on {R}ecent
  {A}dvances in {C}omputational {M}athematics ({ICRACM} 2001) ({M}atsuyama),
  Vol. 152, 17--34 (2003).

\bibitem{BoffiBrezziFortin}
D.~Boffi, F.~Brezzi, and M.~Fortin, \emph{Reduced symmetry elements in linear
  elasticity}, Commun. Pure Appl. Anal. \textbf{8} (2009), no.~1,  95--121.

\bibitem{BrezziFortin91}
F.~Brezzi and M.~Fortin, Mixed and Hybrid Finite Element Methods, Vol.~15 of
  \emph{Springer Series in Computational Mathematics}, Springer-Verlag, New
  York (1991), ISBN 0-387-97582-9.

\bibitem{Caubet_2013}
F.~Caubet, \emph{Instability of an inverse problem for the stationary
  {N}avier-{S}tokes equations}, SIAM J. Control Optim. \textbf{51} (2013),
  no.~4,  2949--2975.

\bibitem{Chak}
S.~K. Chakrabarti, Numerical models in fluid-structure interaction, WIT (2005).

\bibitem{CPT}
A.~Chernov, D.~Pham, and T.~Tran, \emph{A shape calculus based method for a
  transmission problem with a random interface}, Comput. Math. Appl.
  \textbf{70} (2015), no.~7,  1401--1424.

\bibitem{CGG}
B.~Cockburn, J.~Gopalakrishnan, and J.~Guzm\'{a}n, \emph{A new elasticity
  element made for enforcing weak stress symmetry}, Math. Comp. \textbf{79}
  (2010), no. 271,  1331--1349.

\bibitem{Dambrine_2015}
M.~Dambrine, H.~Harbrecht, and B.~Puig, \emph{Computing quantities of interest
  for random domains with second order shape sensitivity analysis}, ESAIM Math.
  Model. Numer. Anal. \textbf{49} (2015), no.~5,  1285--1302.

\bibitem{Del+Zol}
M.~C. Delfour and J.-P. Zol\'{e}sio, \emph{Shape {H}essian by the velocity
  method: a {L}agrangian approach}, in Stabilization of flexible structures
  ({M}ontpellier, 1989), Vol. 147 of \emph{Lect. Notes Control Inf. Sci.},
  255--279, Springer, Berlin (1990).

\bibitem{Dowell+Hall}
E.~H. Dowell and K.~C. Hall, \emph{Modeling of fluid-structure interaction},
  Annual review of fluid mechanics \textbf{33} (2001), no.~1,  445--490.

\bibitem{Feng}
X.~Feng, \emph{Analysis of finite element methods and domain decomposition
  algorithms for a fluid-solid interaction problem}, SIAM J. Numer. Anal.
  \textbf{38} (2000), no.~4,  1312--1336.

\bibitem{Gal11}
G.~P. Galdi, An introduction to the mathematical theory of the
  {N}avier-{S}tokes equations, Springer Monographs in Mathematics, Springer,
  New York, second edition (2011), ISBN 978-0-387-09619-3. Steady-state
  problems.

\bibitem{Garcia}
C.~Garc\'{i}a, G.~N. Gatica, and S.~Meddahi, \emph{Finite element
  semidiscretization of a pressure-stress formulation for the time-domain
  fluid-structure interaction problem}, IMA J. Numer. Anal. \textbf{37} (2017),
  no.~4,  1772--1799.

\bibitem{Gat06}
G.~N. Gatica, \emph{Analysis of a new augmented mixed finite element method for
  linear elasticity allowing {$\Bbb{RT}_0$}-{${\Bbb P}_1$}-{${\Bbb P}_0$}
  approximations}, M2AN Math. Model. Numer. Anal. \textbf{40} (2006), no.~1,
  1--28.

\bibitem{GMM}
G.~N. Gatica, A.~M\'{a}rquez, and S.~Meddahi, \emph{Analysis of the coupling of
  primal and dual-mixed finite element methods for a two-dimensional
  fluid-solid interaction problem}, SIAM J. Numer. Anal. \textbf{45} (2007),
  no.~5,  2072--2097.

\bibitem{abd}
---{}---{}---, \emph{A new dual-mixed finite element method for the plane
  linear elasticity problem with pure traction boundary conditions}, Comput.
  Methods Appl. Mech. Engrg. \textbf{197} (2008), no. 9-12,  1115--1130.

\bibitem{gmm2011}
---{}---{}---, \emph{Analysis of the coupling of {L}agrange and
  {A}rnold-{F}alk-{W}inther finite elements for a fluid-solid interaction
  problem in three dimensions}, SIAM J. Numer. Anal. \textbf{50} (2012), no.~3,
   1648--1674.

\bibitem{Grigoriadis}
D.~G.~E. Grigoriadis, S.~C. Kassinos, and E.~V. Votyakov, \emph{Immersed
  boundary method for the {MHD} flows of liquid metals}, J. Comput. Phys.
  \textbf{228} (2009), no.~3,  903--920.

\bibitem{H10}
H.~Harbrecht, \emph{On output functionals of boundary value problems on
  stochastic domains}, Math. Methods Appl. Sci. \textbf{33} (2010), no.~1,
  91--102.

\bibitem{HSS08}
H.~Harbrecht, R.~Schneider, and C.~Schwab, \emph{Sparse second moment analysis
  for elliptic problems in stochastic domains}, Numer. Math. \textbf{109}
  (2008), no.~3,  385--414.

\bibitem{Has}
J.~Haslinger and R.~A.~E. M\"{a}kinen, Introduction to shape optimization,
  Vol.~7 of \emph{Advances in Design and Control}, Society for Industrial and
  Applied Mathematics (SIAM), Philadelphia, PA (2003), ISBN 0-89871-536-9.
  Theory, approximation, and computation.

\bibitem{Hettlich_1998}
F.~Hettlich and W.~Rundell, \emph{The determination of a discontinuity in a
  conductivity from a single boundary measurement}, Inverse Problems
  \textbf{14} (1998), no.~1,  67--82.

\bibitem{Hoburg}
J.~F. Hoburg and J.~R. Melcher, \emph{Internal electrohydrodynamic instability
  and mixing of fluids with orthogonal field and conductivity gradients},
  Journal of Fluid Mechanics \textbf{73} (1976), no.~2,  333--351.

\bibitem{HJ}
R.~Horn and C.~Johnson, \emph{Matrix analysis, second {\'e}d} (2013).

\bibitem{MR3376135}
A.~M\'{a}rquez, S.~Meddahi, and T.~Tran, \emph{Analyses of mixed continuous and
  discontinuous {G}alerkin methods for the time harmonic elasticity problem
  with reduced symmetry}, SIAM J. Sci. Comput. \textbf{37} (2015), no.~4,
  A1909--A1933.

\bibitem{MMT}
---{}---{}---, \emph{Frequency-explicit asymptotic error estimates for a
  stress-pressure formulation of a time harmonic fluid-solid interaction
  problem}, Comput. Math. Appl. \textbf{76} (2018), no.~9,  2090--2109.

\bibitem{MedMorRod13}
S.~Meddahi, D.~Mora, and R.~Rodr\'{i}guez, \emph{Finite element spectral
  analysis for the mixed formulation of the elasticity equations}, SIAM J.
  Numer. Anal. \textbf{51} (2013), no.~2,  1041--1063.

\bibitem{MMR3}
---{}---{}---, \emph{Finite element analysis for a pressure-stress formulation
  of a fluid-structure interaction spectral problem}, Comput. Math. Appl.
  \textbf{68} (2014), no. 12, part A,  1733--1750.

\bibitem{Morand+Ohayon}
H.~J.-P. Morand and R.~Ohayon, Fluid structure interaction, John Wiley
  (1995).

\bibitem{SokZol92}
J.~Soko\l~owski and J.-P. Zol\'{e}sio, Introduction to shape optimization,
  Vol.~16 of \emph{Springer Series in Computational Mathematics},
  Springer-Verlag, Berlin (1992), ISBN 3-540-54177-2. Shape sensitivity
  analysis.

\bibitem{Tiihonen}
T.~Tiihonen, \emph{Shape optimization and trial methods for free boundary
  problems}, RAIRO Mod\'{e}l. Math. Anal. Num\'{e}r. \textbf{31} (1997), no.~7,
   805--825.

\end{thebibliography}

\end{document}